\documentclass[a4paper,reqno,oneside,10pt]{amsart}
\usepackage[utf8]{inputenc}
\usepackage[english]{babel}
\usepackage{csquotes}		

\usepackage{geometry}

\usepackage{graphicx}
\usepackage[dvipsnames]{xcolor}
\usepackage{comment}

\usepackage[shortlabels]{enumitem} 

\usepackage{amsfonts}
\usepackage{amssymb}
\usepackage{amsthm}
\usepackage{amsmath}
\usepackage{dsfont}
    \let\mathbbm\mathds

\usepackage{mathtools}
	\mathtoolsset{centercolon} 
\usepackage{mathrsfs}
\usepackage{xfrac}

\usepackage[scr=boondox]{mathalfa}
\usepackage{tikz}
    \usetikzlibrary{hobby}
    \usetikzlibrary{arrows}

\usepackage{pgfplots}

\renewcommand{\paragraph}[1]{%
  \medskip\noindent\textit{\underline{#1.}}
}

\makeatletter
\renewcommand\subsubsection{\@startsection{subsubsection}{3}%
  \z@{.5\linespacing\@plus.7\linespacing}{-.5em}%
  {\normalfont\itshape}}
\makeatother

\renewenvironment{proof}[1][\proofname]{%
  \par\medskip
  \noindent\textbf{{#1.}}\;\;%
}{%
  \hfill\qedsymbol\par\bigskip
}

\usepackage{aliascnt}

\newtheorem{proposition}{Proposition}[section]

\newaliascnt{theorem}{proposition}
\newtheorem{theorem}[theorem]{Theorem}
\aliascntresetthe{theorem}

\newaliascnt{lemma}{proposition}
\newtheorem{lemma}[lemma]{Lemma}
\aliascntresetthe{lemma}

\newaliascnt{corollary}{proposition}
\newtheorem{corollary}[corollary]{Corollary}
\aliascntresetthe{corollary}

\newaliascnt{question}{proposition}

\aliascntresetthe{question}

\theoremstyle{definition}

\newaliascnt{definition}{proposition}
\newtheorem{definition}[definition]{Definition}
\aliascntresetthe{definition}

\newaliascnt{assumptions}{proposition}

\aliascntresetthe{assumptions}

\newaliascnt{remark}{proposition}
\newtheorem{remark}[remark]{Remark}
\aliascntresetthe{remark}

\newtheorem*{note}{Note}

\newaliascnt{example}{proposition}
\newtheorem{example}[example]{Example}
\aliascntresetthe{example}

\usepackage[colorlinks=true,allcolors=MidnightBlue,pdfencoding=auto, psdextra]{hyperref}
\usepackage{cleveref}

\crefname{proposition}{proposition}{propositions}
\Crefname{proposition}{Proposition}{Propositions}

\crefname{theorem}{theorem}{theorems}
\Crefname{theorem}{Theorem}{Theorems}

\crefname{lemma}{lemma}{lemmas}
\Crefname{lemma}{Lemma}{Lemmas}

\crefname{corollary}{corollary}{corollaries}
\Crefname{corollary}{Corollary}{Corollaries}

\crefname{question}{question}{questions}
\Crefname{question}{Question}{Questions}

\crefname{definition}{definition}{definitions}
\Crefname{definition}{Definition}{Definitions}

\crefname{assumptions}{assumptions}{assumptions}
\Crefname{assumptions}{Assumptions}{Assumptions}

\crefname{remark}{remark}{remarks}
\Crefname{remark}{Remark}{Remarks}

\crefname{example}{example}{examples}
\Crefname{example}{Example}{Examples}

\crefname{appendix}{appendix}{appendices}
\Crefname{appendix}{Appendix}{Appendices}

\numberwithin{equation}{section}

\title[An elliptic regularization approach to the Stefan problem]{An elliptic regularization approach to the Stefan problem}
\date{\today}

\newcommand{\eps}{\varepsilon}
\newcommand{\de}{\partial}
\newcommand{\dif}{\,\text{d}}
\renewcommand{\phi}{\varphi}
\newcommand{\Interior}{\text{Int}}

\newcommand{\Jc}{\mathcal{J}}
\newcommand{\Fc}{\mathcal{F}}
\newcommand{\Oc}{\mathcal{O}}

\newcommand{\Hc}{\mathscr{H}}

\newcommand{\Uc}{\mathcal{U}}
\newcommand{\RR}{\mathbb{R}}
\newcommand{\R}{\mathbb{R}}
\newcommand{\NN}{\mathbb{N}}

\DeclareMathOperator{\dive}{div}
\newcommand{\weak}{\rightharpoonup}
\newcommand{\ind}{\mathbbm{1}}

\begin{document}

\author[F.~Paiano]{Filippo Paiano}

\author[B.~Velichkov]{Bozhidar Velichkov}

\address {Filippo Paiano \newline \indent
Dipartimento di Matematica, Universit\`a di Pisa \newline \indent
Largo Bruno Pontecorvo, 5, 56127 Pisa, Italy}
\email{filippo.paiano@phd.unipi.it}

\address {Bozhidar Velichkov \newline \indent
Dipartimento di Matematica, Universit\`a di Pisa \newline \indent
Largo Bruno Pontecorvo, 5, 56127 Pisa, Italy}
\email{bozhidar.velichkov@unipi.it}

\subjclass[2020] {
35K67, 
35R35, 
35R37, 
80A22 
}

\begin{abstract}
In this paper, we develop the theory for the two-phase Stefan problem with finite energy, possibly non-empty \emph{mushy region}, and space-dependent melting temperature. 
Specifically, we prove the existence of weak solutions with an elliptic regularization scheme.
Our existence theorem provides information about the regularity of the solutions: we prove that the temperature of weak solutions is in $H^1$ for all times, that the enthalpy is well defined and bounded for all times, and that both the enthalpy and the temperature are weakly continuous in time.
Finally, we establish a comparison principle for weak solutions on general unbounded domains and use it to show that every weak solution is recovered by the approximation scheme.
\end{abstract}
\maketitle

\tableofcontents

\section{Introduction}\label{s:introduction}

In this paper we prove existence, uniqueness and comparison principle for weak solutions to the two-phase Stefan problem. We do this via an elliptic regularization scheme, in which the weak solutions are obtained as the limit of minimizers to a family of approximating functionals $\mathcal F_\eps$.
We prove that the scheme converges to a couple $(u,\mu)$, which solves the two-phase Stefan problem in a weak integral sense. We also show that the enthalpy $\mu$ of the limit solution $(u,\mu)$ is weakly-$\ast$ continuous in time. This property allows to give a definition of a weak solution (enthalpy solution), which includes the notion of initial enthalpy and has naturally associated comparison principles, which allow to prove the uniqueness of the weak solution and the consistency of the elliptic regularization scheme (the fact that every weak solution is the limit of the elliptic regularization scheme).  Finally, we notice that our elliptic regularization allows to obtain weak solutions with finite Dirichlet energy and possibly non-empty mushy region. \medskip

\noindent The rest of the introduction is dedicated to the main results of the paper.
\begin{itemize}
\item In \Cref{sub:notation} we introduce the notation that we will use throughout the paper. 
\item In \Cref{sub:weak-solutions-definitions} we give the definition of \emph{weak/enthalpy solution} of the Stefan problem. 
\item In \Cref{sub:introduction-elliptic} we introduce the elliptic regularization scheme and the main theorem of the paper (\Cref{t:main}). We also state the two main existence results for weak solutions (\Cref{cor:existence-weak-solutions-Dirichlet} and \Cref{cor:existence-weak-solutions-Neumann}), which are a direct consequence of \Cref{t:main}. 
\item  \Cref{sub:introduction-comparison} is dedicated to the comparison principles (\Cref{t:comparison-dirichlet,t:comparison-neumann}), the uniqueness of the weak solutions and the consistency of the elliptic regularization scheme (\Cref{cor:every-weak-solution-is-the-limit-of-u-eps}).
\item In \Cref{sub:about-the-proofs} we briefly discuss the key ideas in the proof of \Cref{t:main}.  
\item \Cref{sub:plan-of-the-paper} contains the plan of the paper.
\end{itemize}

\subsection{Notation}\label{sub:notation}
Throughout this paper, we will always work with sets $D\subset\R^d$ and $E \subset \R^d\times[0,+\infty)$.\medskip

\paragraph{Space-time sets}
We use $E\subset \R^d \times[0,+\infty)$ to indicate a space-time set. Moreover,
\begin{itemize}
	\item $H^1(E)$ is the usual Sobolev space in $\R^{d+1}$; 
	\item for $0\le t_1\le t_2$ we say that $E$ is \emph{cylindrical} if it is of the form $E = D \times (t_1,t_2]$ for some $D\subset \R^d$, and we define the following boundary set:
	\begin{itemize}
		\item the \emph{lateral boundary of $E$} as $\de_L E := \de D \times(t_1,t_2)$;
		\item the \emph{parabolic boundary of $E$} as $\de_P E := \de_L E \cap E(t_1)$.
	\end{itemize}
\end{itemize}

\paragraph{Spatial sets} Given an open set $D\subset\R^d$ we define
\begin{itemize}
	\item $D(t) := D\times\{t\}$ for all $t\in [0,+\infty)$ ;	
	\item  $D_t:= D \times [0,t)$ for all $t\in (0,+\infty]$ ; in particular, $D_\infty:= D \times[0,+\infty)$; 
	\item $H^1(D)$ is the usual Sobolev space in $\R^d$;
	\item $H^1_{0,P}(D_t)$ is the closure of $C^\infty_c(D\times(0,+\infty))$ in $H^1(D_t)$, so that the functions in $H^1_{0,P}(D_t)$ are zero on the parabolic part of the boundary 
	$$\partial_P D_\infty=(D\times\{0\})\cup(\partial D\times(0,+\infty));$$
	\item $H^1_{0,L}(D_t)$ is the closure of $C^\infty_c(D\times\R)$ in $H^1(D_t)$; so the functions in $H^1_{0,L}(D_t)$ are zero on the lateral  part of the boundary 
	$$\partial_LD_\infty=\partial D\times(0,+\infty).$$
\end{itemize}
Moreover, given $(x_0,t_0)\in \R^d\times(0,+\infty)$ and $r>0$, we will use the following notation:
\begin{itemize}
	\item $B_r(x_0) \subset \R^d$ is the ball (in space) of radius $r>0$ centered in $x_0$;
	\item $C_r(x_0) := B_r(x_0) \times[0,+\infty)$ is the infinite space-time cylinder over $B_r(x_0)$.
\end{itemize}

\paragraph{Temperature}
We will call \emph{temperature} a function $u : D \times[0,+\infty) \to \R$, and we write 
\[u = u^+ + u^-,\]
where $u^+\ge 0$ is the positive part of $u$ and $u^-\le 0$ is the  negative part of $u$, and we define the positive and negative phases as
	\[
	\Omega^\pm_u := \big\{(x,t) \in D\times[0,+\infty) : \pm u(x,t) >0 \big\}.
	\] 
With a slight abuse of notation, we define the positive and negative phases at time $t\ge0$ as
\[
    \Omega_u^\pm(t) := \big\{x \in D : \pm u(x,t) >0 \big\}.
\]

\paragraph{Integral and differential operators} 
For any Lebesgue measurable
$$D\subset\R^d\ ,\quad  E \subset D \times[0,+\infty)\quad\text{and}\quad u : D \times[0,+\infty)\to\R,$$ 
we adopt the following notation:
\begin{itemize}
	\item with $|D|$ and $|E|$, we indicate both the $d$ and $(d+1)$ dimensional Lebesgue measure, while we use $\Hc^{d-1}(D)$ for the $(d-1)$-Hausdorff measure of $D\subset\R^d$;
	\item we use all the following convention for the integrals at fixed time, depending on the circumstances
	\[
	\int_{E(t)} u \dif x = \int_{E}u(x,t) \dif x = \int_{E} u \dif x \bigg|_{\tau=t};
	\]
	\item we omit the $D\times[0,+\infty)$ term in the double integrals
	\[
	\iint u \dif x \dif t := \iint_{D\times[0,+\infty)} u (x,t) \dif x \dif t;
	\]
	\item if $D$ is open and $E$ is open in $D\times(0,+\infty)$, $k,\ell \in \NN$, then we write $u\in C^{k,\ell}_{x,t}(E)$ if $u$ is $k$ times differentiable in $D\times(0,+\infty)$ in the space variables, $\ell$ times in the time variable and all these derivatives are continuous; we write $u\in C^{k,\ell}_{x,t}\left(\overline E\right)$ if the derivatives are continuous up to $\partial E$;
	\item we adopt the following conventions
	\[
	D u = D_{x,t} u, \qquad \nabla u = \nabla_x u,\qquad \text{and}\qquad \Delta u = \Delta_x u.
	\]
\end{itemize}

\paragraph{Functional setting} 
Let $D \subset \R^d$ be an open set. We define 
the functional space
\[
\Uc :=
\Big\{
u : D \times [0,+\infty) \to \R \ : \ 
u \in H^1(D_T)\ \text{for all}\ T>0
\Big\}.
\]
We say that a sequence $\{u_j\}_{j\in\NN} \subset \Uc$ \emph{converges weakly to $u \in \Uc$}, and we write
\[
u_j \xrightharpoonup[j\to+\infty]{} u \qquad \text{in}\quad \Uc,
\]
if and only if, for all $T>0$,
\[
u_j \xrightharpoonup[j\to+\infty]{} u \qquad \text{weakly in}\quad H^1(D_T).
\]
Given a function $g\in H^1(D)$, we define the spaces $\Uc_{\mathcal D}$ and $\Uc_{\mathcal N}$ as follows:
\begin{equation}\label{e:def:space-U}
\begin{array}{rl}
	\displaystyle	\Uc_{\mathcal D}(D,g) &:= 
	\Big\{
	u \in \Uc: (u - g) \in H^1_{0,P}(D_t)\ \text{ for all }\ t\in(0,+\infty)
	\Big\},\medskip\\
	\Uc_{\mathcal N}(D,g) &:= 
	\Big\{
	u \in \Uc : u(x,0) = g(x)
	\Big\}.
\end{array}	
\end{equation}
These correspond to the Dirichlet boundary problem (with time-independent boundary data) and to the homogeneous Neumann problem, which, from a physical point of view, 
models adiabatic processes.\medskip

\subsection{Weak (enthalpy) solutions of the Stefan problem - definitions}\label{sub:weak-solutions-definitions}

Throughout the paper we will use the terms {\it weak solution} and {\it enthalpy solution} to the Stefan problem, as well as {\it weak formulation} and {\it enthalpy formulation} of the Stefan problem, as synonyms. The definition of an enthalpy solution that we employ throughout this paper is the following.

\begin{definition}[Enthalpy solution]\label{def:enthalpy_solution}
	Let $D\subset\R^d$ be an open set, $T \in (0,+\infty]$ and $u_M : D \times[0,+\infty) \to \R$ a measurable function.
	A couple $(u,\mu)$ of Lebesgue measurable functions $u:D_T\to\R$ and $\mu:D_T\to\R$ is an \emph{enthalpy solution} (or equivalently \emph{weak solution}) of the Stefan problem in $D_T$ (with respect to the melting temperature $u_M$) if the following hold:
	\begin{enumerate}[{\itshape (i)}]
		\item\label{item:as:regularity} \textit{Regularity of the temperature in time.}
		$u \in L^2_{loc}([0,T);H^1(D))$ and $\de_t u \in L^2_{loc}([0,T);L^2(D))$.
		\item\label{item:as:continuity} \textit{Continuity of the mushy coefficient in time.} The function $\mu:[0,T)\to L^\infty(D)$ is continuous with respect to the weak-$\ast$ topology in $L^\infty(D)$, that is:
		\[
		\int_{D} \mu(x,t) \, \eta(x) \dif x = \lim_{\tau \to t} \int_{D}\mu(x,\tau) \, \eta(x) \dif x,
		\]
		for all $t\in[0,T)$ and all $\eta\in L^1(D)$.
		\item\label{item:as:compatibility}\textit{Compatibility.} For all $t\in[0,T)$, there exists $N_t \subset D$ such that $|D\setminus N_t|=0$ and
		\begin{equation}\label{e:compatibility_enthalpy-phases}\tag{C-ST}
			\mu(x,t) = 
			\begin{cases}
				\begin{array}{rl}
					1 & \text{if}\quad u(x,t)>u_M(x,t),\\
					-1 & \text{if} \quad u(x,t)<u_M(x,t),
				\end{array}
			\end{cases} \qquad\text{for every}\quad x\in N_t.
		\end{equation}
		\item \label{item:as:weakformulation} \textit{Weak formulation.}
		The following integral identity holds
		\begin{equation}\label{e:weak_solution}\tag{H-ST}
			\int_{D(t)}(u+\mu)\eta \dif x \bigg|_{t=t_1}^{t_2} 
			= \int_{t_1}^{t_2}\int_D (u+\mu) \de_t \eta 
			-\nabla u \cdot \nabla \eta \dif x \dif t,
		\end{equation}
		for all $0\le t_1\le t_2 < T$ and all {\it admissible test functions} $\eta$, where:
		\begin{itemize}
			\item $\eta \in C^\infty_c(D\times\R)$ in the Dirichlet case;
			\item $\eta \in C^\infty_c(\R^d\times\R)$ for the Neumann problem.
		\end{itemize}
	\end{enumerate}
\end{definition}

Weak solutions have already been defined and studied in the literature, see for instance \cite{Kamenomostskaya,Oleinik60,Ladyzhenskaya_Solonnikov_Uraltseva68:Parabolic_equation_BOOK,Friedman_1968,CannonDiBenedetto1980:ExistenceStefan,GotzZaltzman:NonincreaseMushyInhomogeneousPb} and the discussion in \Cref{sub:weak-solutions-history}. The main difference in \Cref{def:enthalpy_solution}, with respect to the classical definitions of weak solutions, is in the requirement that the mushy coefficient $\mu$ is defined for every time $t\ge0$ and that the map $t\mapsto\mu(\cdot,t)$ is weakly-$\ast$ continuous. Indeed, the weak formulation \eqref{e:weak_solution} 
only requires that 
$$u \in L^2_{loc}((0,+\infty);H^1(D))\qquad\text{and}\qquad  \mu(\cdot,t) \in L^\infty(D)\quad\text{for all}\quad t\ge0,$$
in which case \eqref{e:weak_solution} holds for almost-every $0\le t_1<t_2<+\infty$. The main reason, for which we develop a theory for weak solutions satisfying the condition (ii) from \Cref{def:enthalpy_solution}, is the role of this  condition in the comparison principles \Cref{t:comparison-dirichlet} and \Cref{t:comparison-neumann}. Precisely, it is well-known that in order to have a comparison principle for weak solutions $u_1, u_2$ of the (two-phase) Stefan problem, it is not sufficient to have ordered initial temperatures $u_1(x,0)\le u_2(x,0)$, but is necessary to have also information about the initial mushy coefficients $\mu_1$ and $\mu_2$ (see for instance  \cite{Ladyzhenskaya_Solonnikov_Uraltseva68:Parabolic_equation_BOOK, CannonDiBenedetto1980:ExistenceStefan} and example \Cref{ex:false-onephase}); essentially, this is due to the fact that the function $\mu$ encodes the information about the evolution of the domains $\{u(\cdot,t)>0\}$ and $\{u(\cdot,t)<0\}$. We will prove the existence of weak solutions (in the sense of \Cref{def:enthalpy_solution}) in \Cref{t:main} via an elliptic regularization scheme, which allows to derive the weak-$\ast$ continuity of the mushy coefficient from the convergence of the approximating sequence.

\begin{remark}[About the melting temperature $u_M$]\label{oss:about-the-drift-term-F}
	Suppose that $(u,\mu)$ us an enthalpy solution in the sense of \Cref{def:enthalpy_solution} with melting temperature $u_M = u_M(x)$ that does not depend on the time variable.
	Suppose that $u_M$ can be written in the form $u_M = u^R_M + u_M^S$, where:
	\begin{equation}\label{e:melting-temperature}
		\nabla u_M^R \in L^2(D;\R^d)\qquad\text{and}\qquad u_M^S : D \longrightarrow \R \quad \text{is measurable}.
	\end{equation}
	Then, setting $w:=u-u_M^R$, we get that the couple  $(w,\mu)$ satisfies
	\[
	\int_{D} (w + \mu) \eta \dif x\bigg|_{t=t_1}^{t_2} = \int_{t_1}^{t_2} \int_D (w + \mu) \de_t \eta - \nabla w \cdot \nabla \eta - \nabla u_M^R \cdot \nabla \eta \dif x\dif t,
	\]
	for all test functions $\eta$ ($\eta \in C^\infty_c(D\times\R)$ in the Dirichlet case and $\eta \in C^\infty_c(\R^d\times\R)$ in the Neumann case).
	Moreover, we can rewrite the compatibility condition for $(u,\mu)$ as follows:
	\[
	\mu(x,t) = 
	\begin{cases}
		\begin{array}{rl}
			1 &\text{if}\quad w(x,t) > u^S_M(x)\\
			-1 &\text{if}\quad w(x,t) < u^S_M(x),
		\end{array}
	\end{cases}\qquad\text{for almost every}\quad x\in D.
	\]
In order to have a theory for a class of problems, which is invariant with respect to this family of transformations, we 
recast \Cref{def:enthalpy_solution} including a heat source term $F \in L^2(D;\R^d)$. In the setting of the previous remark, $F$ is the weak gradient of the "regular part" of $u_M$, that is: $F=\nabla u_M^R$.	
\end{remark}

\begin{definition}[Enthalpy solution with heat source $F$]\label{def:enthalpy_solution-F}
	Let $D\subset \R^d$ an open set and $T\in (0,+\infty]$. Let $u_M: D \to \R$ be a Lebesgue measurable function and $F \in L^2(D;\R^d)$.
	We say that a pair of measurable functions $(u,\mu)$ is a \emph{enthalpy solution of the Stefan problem with heat source} $\dive F$ with Dirichlet (Neumann) boundary conditions on $\partial D$, if the conditions  \ref{item:as:regularity} -- \ref{item:as:compatibility} of \Cref{def:enthalpy_solution} hold and if 
	\begin{equation}\label{e:weak-equation-F}\tag{H-STF}
		\int_{D}(u+\mu)\eta \dif x\bigg|_{t=t_1}^{t_2} = \int_{t_1}^{t_2}\int_D (u+\mu)\de_t\eta - \nabla u \cdot \nabla \eta - F \cdot \nabla \eta \dif x \dif t
	\end{equation}
	for all $0\le t_1 < t_2 < T$ and all admissible test functions $\eta$; as in \Cref{def:enthalpy_solution} we use test functions $\eta \in C^\infty_c(D\times\R)$ in the Dirichlet case and test functions $\eta \in C^\infty_c(\R^d\times\R)$ in the Neumann case.
\end{definition}

\begin{remark}
The presence of a heat source $F$ does not lead to any particular complications in the proof of our main result \Cref{t:main}. On the other hand, solution with such terms  appear naturally in some physical models, so we decided to extend our theory in order to include this wider class of problems.
\end{remark}	

\subsection{Existence of weak solutions via an elliptic regularization scheme}\label{sub:introduction-elliptic}
In this section we define the family of the approximating problems, which will take part in the elliptic regularization scheme. The choice of the {\it two-sided Heaviside approximations} (see \Cref{def:two-sided-heaviside-approximation} here below) will be essential in the proof of our main result \Cref{t:main} in \Cref{sec:convergence_Stefan}. 
The main features of this approximating family are listed in the following definition.
\begin{definition}[Two-sided Heaviside approximations]\label{def:two-sided-heaviside-approximation}
Let $D\subset\R^d$ be an open set, $u_M:D\to\R$ a Lebesgue measurable function, and  $h:D\to\R$ a measurable function satisfying $|h|\le1$ in $D$.	We say that a family of functions $\{p_\eps\}_{\eps>0}$
$$p_\eps:\mathbb R\times D\to\R,$$
is a {\it two-sided Heaviside approximation centered in $(u_M,h)$} if: 
\begin{enumerate}[\rm (1)]
\item for all $(z,x)\in\R\times D$ we have that 
\begin{equation}\label{e:pe-convergence}
\lim_{\eps\to0}	p_{\eps}(z,x) = \ind_{(u_M(x),+\infty)}(z) - \ind_{(-\infty,u_M(x))}(z) + h(x) \ind_{\{u_M(x)\}}(z);
\end{equation}
\item for each $x\in D$, the function $p_\eps(\cdot,x):\R\to\R$ is $C^\infty(\R)$;
\item for all $(z,x)\in\R\times D$ we have that $|p_\eps(z,x)|\le 1$;
\item for each $\eps>0$, there is $L_\eps>0$ such that 
\begin{equation}\label{e:pe-Lipschitz-bound}
|\partial_zp_{\eps}(z,x)|\le L_\eps\quad\text{for all}\quad (z,x)\in \R\times D.
\end{equation}
\end{enumerate}	
\end{definition}
We can now define the family of approximating functionals that we will use in the elliptic regularization scheme. 
Suppose that we have:
\begin{itemize}[--] 
	\item an open set $D\subset\mathbb R^d$;
	\item a measurable function $u_M:D\to\R$, which we call {\it melting temperature};
	\item a function $h\in L^\infty(D)$ with $\|h\|_{L^\infty(D)}\le 1$, which we call {\it initial enthalpy};
	\item a vector field $F\in L^2(D;\R^d)$, which we call {\it heat source}.
\end{itemize} 
Given $D$, $u_M$, $h$, $F$ as above, and a two-sided Heaviside approximation \begin{center}$\{p_\eps:\R\times D\to\R\}_{\eps>0}$ centered at $(u_M,h)$,\end{center} we define the functional 
\[
\Fc_{\eps} : \Uc \longrightarrow \R \cup \{+\infty\}
\]
as
\begin{equation}\label{e:efunctional}
	\Fc_{\eps}(u) 
	:= \iint \frac{e^{-t/\eps^4}}{\eps^4} \Biggl\{ \eps^4 \biggl[ |\de_t u|^2 + |\sqrt{\eps} \,\de_t(p_{\eps}(u,x))|^2 \biggr] + |\nabla u|^2 + 2F \cdot \nabla u \Biggr\} \dif x \dif t,
	\tag{$\Fc_\eps$}
\end{equation}
if the integral converges, and $\Fc_{\eps}(u):=+\infty$ otherwise.

\begin{remark}[Existence of minimizers $u_\eps$]
Notice that for all $u\in \Uc$
 it holds
\[
|\nabla u(x,t)|^2 + 2F(x) \cdot \nabla u(x,t) \ge - |F(x)|^2,
\]
and so $\Fc_\eps(u) \ge -  \|F\|_{L^2(D)}^2$ for all $u\in \Uc$. In particular, $\Fc_\eps(u)\in (-\infty,+\infty]$ for all $u\in \Uc$. Furthermore, in \Cref{prop:existence-J-epsilon} we will show that, for any fixed $g\in H^1(\Omega)$, there are minimizers $u_\eps$ of $\mathcal F$ in both the Dirichlet class $\mathcal U_{\mathcal D}(D,g)$ and the Neumann class $\mathcal U_{\mathcal N}(D,g)$.
\end{remark}

\begin{theorem}[Convergence of the regularization scheme and limit problem]\label{t:main}
	Let $D$ be an open set in $\R^d$, $g\in H^1(D)$, $h\in L^\infty(D)$ with $|h(x)|\le 1$, $u_M : D \to \R$ measurable, and $F \in L^2(D;\R^d)$. Then, there exists an two-sided Heaviside approximation $\{p_\eps\}_{\eps>0}$ centered in $(u_M,h)$ such that the following holds. 
	Given a family of minimizers $u_\eps$ of the functional \eqref{e:efunctional} in $\Uc_{\mathcal D}(D,g)$ (resp. in $\Uc_{\mathcal N}(D,g)$), there exists a sequence $\{\eps_n\}_{n\in\NN}$ such that $\eps_n\to0$ and the following properties hold:
	\begin{enumerate}[label={\itshape(\roman*)}]
		\setlength{\itemsep}{8pt}
		
		\item\label{item:main_thm:convergence} \textbf{Convergence.} There exist two functions,
		\[
		u \in \Uc_{\mathcal D}(D,g) \quad (\text{resp.}\ u \in \Uc_{\mathcal N}(D,g)) \qquad \text{and} \qquad \mu \in L^\infty(D \times [0, +\infty)),
		\]
		representing the \emph{temperature} and the \emph{mushy coefficient} respectively, such that:
		\begin{enumerate}[label={\itshape (i.\alph*)}]    
			\setlength{\itemsep}{8pt}
			
			\item\label{item:main_thm:convergence:a} Convergence of the temperature. $u_{\eps_n} \xrightharpoonup[n\to +\infty]{} u$ weakly in $\Uc$.
			
			\item\label{item:main_thm:convergence:b} Convergence of the mushy coefficient. $p_{\eps_n}(u_{\eps_n},\cdot) \xrightharpoonup[n\to+\infty]{} \mu$ weakly-$\ast$ in $L^\infty(D\times[0,+\infty))$.
			
			\item\label{item:main_thm:convergence:c} Fixed-time convergence. For all $t_0 \ge 0$, we have 
			\begin{equation}\label{e:main_thm:convergence_mu}
				p_{\eps_n}(u_{\eps_n}(\cdot,t_0),\cdot) \xrightharpoonup[n\to+\infty]{\ast} \mu(\cdot,t_0) \qquad\text{weakly-$\ast$ in }\quad L^\infty(D).
			\end{equation}
			In particular, the mushy coefficient $\mu$ is well-defined at every time $t_0$ and $\mu(\cdot,t_0) \in L^\infty(D)$.

		\end{enumerate}
		
		\item \label{item:main_thm:enthalpy} \textbf{Regularity properties of $(u,\mu)$.} \medskip
		\begin{enumerate}[label={\itshape(ii.\alph*)}]
			\setlength{\itemsep}{8pt}
			
			\item\label{item:main_thm:temperature} Energy bound. For all $t\ge0$, $u(\cdot,t) \in H^1(D)$. Moreover $\nabla u(\cdot,t) \in L^\infty((0,+\infty); L^2(D;\R^d))$ and it is bounded by the initial data, that is,
			\begin{equation}\label{e:main_thm:temperature}
				\sup_{t\ge0}\| \nabla u(\cdot,t) \|_{L^2(D)}^2 \le 8\,(\|\nabla g \|_{L^2(D)}^2 + \| F \|_{L^2(D)}^2) \qquad\text{for all}\quad t\ge0.
			\end{equation}
			
			\item\label{item:main_thm:enthalpy:a} Continuity. The function $\mu: [0,+\infty)\to L^\infty(D)$ is continuous with respect to the weak-$\ast$ topology of $L^\infty(D)=(L^1(D))^\ast$; i.e., for all $t_0 \ge 0$ and $\eta \in L^1(D)$, we have
			\[
			\int_{D(t_0)} \mu\,\eta \dif x = \lim_{t \to t_0} \int_{D(t)} \mu \,\eta \dif x.
			\]
			
			\item\label{item:main_thm:enthalpy:b} Compatibility. For all $t \ge 0$, there exists a set $N_t \subset D$ such that  $|D\setminus N_t|=0$,
			\[
			|\mu(x,t)| \le 1 \quad \text{for all}\quad x \in N_t,
			\]
			and
			\begin{equation*}
				\mu(x,t)=
				\begin{cases}
					\begin{array}{rl}
						-1 & \quad \text{for all}\quad x \in N_t\quad\text{such that}\quad u(x,t)< u_M(x),\\
						1  & \quad \text{for all}\quad x \in N_t\quad\text{such that}\quad u(x,t)>u_M(x).
					\end{array}
				\end{cases}
			\end{equation*}
			
		\end{enumerate}
		
			\item\label{item:main_thm:initial-mushy-coefficient} \textbf{Initial mushy coefficient.} The mushy coefficient $\mu$ at time zero is determined by $h$. Precisely, for Lebesgue almost-every $x\in D$ we have
			\[
			\mu(x,0) = \begin{cases}
				\begin{array}{cl}
							1& \text{if}\quad x\in  \Omega_g^+=\{g>0\}\cap D,\\
							-1& \text{if}\quad x\in  \Omega_g^-=\{g<0\}\cap D,\\
				h(x)& \text{if}\quad x \in D\setminus (\Omega_g^+\cup \Omega_g^-).
				\end{array}
				\end{cases}
			\]
		
		\item\label{item:main_thm:limit_eq} \textbf{The limit problem.} The pair $(u,\mu)$ satisfies the integral identity \eqref{e:weak-equation-F} from \Cref{def:enthalpy_solution-F}.
	\end{enumerate}
\end{theorem}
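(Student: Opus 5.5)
The proof follows the De Giorgi elliptic-regularization (WED) strategy. The key is to extract, from the minimality of $u_\eps$, an Euler--Lagrange equation and time-uniform energy estimates, and then pass to the limit.

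\textbf{Step 1 (Euler--Lagrange equation and scaling).} First, I fix a concrete two-sided Heaviside approximation, for instance
$$p_\eps(z,x)=\phi_\eps(z-u_M(x))+h(x)\,\psi_\eps(z-u_M(x)),$$
where $\phi_\eps$ is a smooth odd monotone approximation of the sign function and $\psi_\eps$ is a smooth bump with $\psi_\eps(0)=1$, chosen so that $|p_\eps|\le1$ and the Lipschitz constant $L_\eps$ is explicit. Writing the first variation of $\Fc_\eps$ in $u_\eps$ gives
$$-\eps^4\de_t\big(e^{-t/\eps^4}\de_t(u_\eps+\eps p_\eps'(u_\eps)\de_t p_\eps)\big)+e^{-t/\eps^4}\big(\de_t u_\eps+\eps\,\de_t p_\eps\,p_\eps'(u_\eps)\cdots\big)\cdots-\dive(e^{-t/\eps^4}(\nabla u_\eps+F))=0 .$$
Here the weight $\eps^4$ and the factor $\sqrt\eps$ are tuned so that, after rescaling, the equation reads formally
$$\de_t(u_\eps+p_\eps(u_\eps))-\Delta u_\eps-\dive F=O(\eps^4\de_{tt}).$$
It is cleaner to work directly with the weak form, tested against $\eta e^{t/\eps^4}$.

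\textbf{Step 2 (uniform estimates).} This is the standard De Giorgi step. I compare $u_\eps$ with the time-shifted competitor $u_\eps(\cdot,t-s)$, extended by $g$ for $t<s$ (this competitor is admissible in both classes). Differentiating in $s$, one obtains that the function
$$E_\eps(t)=\int_D|\nabla u_\eps|^2+2F\cdot\nabla u_\eps\,dx$$
(plus kinetic terms) at time $t$ is controlled by its averaged value, which in turn is bounded by $\Fc_\eps(g)=\|\nabla g\|^2+2\int F\cdot\nabla g$. Absorbing the $F$ terms by Young's inequality gives the constant $8(\|\nabla g\|^2+\|F\|^2)$, uniformly in $t$ and $\eps$. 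The same comparison gives
$$\int_0^T\!\!\int_D|\de_tu_\eps|^2+\eps|\de_tp_\eps(u_\eps)|^2\le C(T),$$
using an inner-variation identity. Together these yield weak compactness in $\Uc$ and, by $|p_\eps|\le1$, weak-$\ast$ compactness of $p_\eps(u_\eps)$, which proves (i.a) and (i.b) along a diagonal sequence $\eps_n$. Lower semicontinuity of the fixed-time gradient bound then gives (ii.a).

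\textbf{Step 3 (limit equation, fixed-time convergence, continuity).} Passing to the limit in the weak Euler--Lagrange equation kills the $\eps^4\de_{tt}$ and $\sqrt\eps$ terms via the bounds of Step 2, and gives (iv) for a.e. $t_1,t_2$. The main obstacle is (i.c) and (ii.b): $\mu(\cdot,t)$ must be defined for every $t$. My plan is to show that, for every $\eta\in C^\infty_c$, the maps
$$t\mapsto\int_D(u_\eps+p_\eps(u_\eps))\eta\,dx$$
are equicontinuous on $[0,T]$ uniformly in $\eps$. From the approximate equation, the increment over $[t_1,t_2]$ is bounded by $C|t_2-t_1|^{1/2}$ plus error terms of order $\eps^{1/2}$ or $\eps^4\|\de_t(\cdot)\|$, and these errors must be controlled pointwise in time, not merely in $L^2$, which is the delicate part. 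Since $u_\eps$ is equi-H\"older in $L^2$ in time by the $\de_t$ bound, Arzel\`a--Ascoli (applied on a countable dense set of $\eta$, then density with $|p_\eps|\le1$) gives convergence of $p_\eps(u_\eps(\cdot,t))$ for all $t$ to a weakly-$\ast$ continuous limit, identified with $\mu$ a.e. Then (iv) holds for all $t_1,t_2$ by continuity.

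\textbf{Step 4 (compatibility and initial value).} Finally, $u_\eps(\cdot,t)\to u(\cdot,t)$ strongly in $L^2_{loc}$ for every $t$, by the uniform $H^1$ bound, Rellich, and the time equicontinuity. Passing to an a.e. subsequence, on $\{u>u_M\}$ we have $u_\eps>u_M+\delta$ eventually, so $p_\eps\to1$ by pointwise convergence (uniform on $\{z-u_M>\delta\}$ for my explicit $p_\eps$); similarly on $\{u<u_M\}$. This proves (ii.c). At $t=0$ we have $u_\eps=g$ exactly, so $p_\eps(g,x)\to$ the limit in \eqref{e:pe-convergence} pointwise, and dominated convergence gives (iii). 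Note that the pointwise limit at $t=0$ gives (iii) with the sets $\{g>u_M\}$ and $\{g<u_M\}$; with the stated $\Omega_g^\pm$ one reads $u_M=0$ there, or shifts accordingly.
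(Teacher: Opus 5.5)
The gap is in Step 1, and it breaks (iv). The Euler--Lagrange equation does not read $\de_t(u_\eps+p_\eps(u_\eps))-\Delta u_\eps-\dive F\approx0$. Varying $\Fc_\eps$ and testing with $\eps^4e^{t/\eps^4}\eta$, $\eta\in C^\infty_c(D\times(0,+\infty))$, gives
\[
\iint\Big\{\big[1+\eps(\de_zp_\eps(u_\eps))^2\big]\de_tu_\eps\,(\eta+\eps^4\de_t\eta)+\nabla u_\eps\cdot\nabla\eta+F\cdot\nabla\eta+\eps^5\,\de_zp_\eps(u_\eps)\,\de_{zz}p_\eps(u_\eps)\,(\de_tu_\eps)^2\eta\Big\}\dif x\dif t=0.
\]
So the transported quantity is $u_\eps+q_\eps(u_\eps)$ with $\de_zq_\eps=\eps(\de_zp_\eps)^2$, not $u_\eps+p_\eps(u_\eps)$. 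To get \eqref{e:weak-equation-F} for $\mu=\lim p_\eps(u_\eps)$, you need $p_\eps-q_\eps\to0$ uniformly, up to a function of $x$. That is, $\eps\,\de_zp_\eps\in\{0,1\}$ off a set of small measure, so $p_\eps(\cdot,x)$ must be a ramp of slope \emph{exactly} $1/\eps$. The factor $\sqrt\eps$ only fixes the order of magnitude, not this shape.

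A generic smooth sign approximation fails. For $h\equiv0$ and $p_\eps=\tanh((z-u_M)/\eps)$ one gets $\int_\R\eps(\de_zp_\eps)^2\dif z=\tfrac43\neq2$. The limit identity then holds for $\lim q_\eps(u_\eps)$, which equals $\pm\tfrac23$ on the phases (wrong latent heat), and (iv) fails for $\mu$ as soon as the phases move. An additive term $h\,\psi_\eps$ with a smooth bump is likewise incompatible with $\eps\,\de_zp_\eps\in\{0,1\}$ for general values of $h$; $h$ has to enter as a \emph{shift} of the ramp.

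This is exactly the paper's construction $p_\eps(z,x)=(\rho_\eps\ast p)\big(\eps^{-1}(z-u_M(x))+h(x)\big)$, with $p$ the truncated identity: a slope-$1/\eps$ ramp through $(u_M(x),h(x))$, mollified at scale $\eps^2$ in $z$. It gives $\|p_\eps-q_\eps\|_{L^\infty}\le\eps$ (\Cref{lemma:estimate-q-epsilon}) while keeping $\eps^3|\de_{zz}p_\eps|\le C$, so the last term above is $O(\eps)$ by the energy bound. These two requirements pull in opposite directions, and the scale $\eps^2$ balances them.

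You leave open the point you flag as delicate in Step 3, but it closes with two estimates. Test the identity with $\eta$ times a time cutoff whose derivative is $\approx\delta^{-1}$ on a window of length $\delta$. By Cauchy--Schwarz against $\iint(1+\eps(\de_zp_\eps)^2)|\de_tu_\eps|^2\le C$, the $\eps^4$-term costs $O(\eps^{7/2}\delta^{-1/2})$. Replacing the window average of $p_\eps(u_\eps)$ by its value at a fixed time costs $O(\eps^{-1}\delta^{1/2})$, using $|\de_zp_\eps|\le\eps^{-1}$ and $\|u_\eps(\cdot,t)-u_\eps(\cdot,s)\|_{L^2(D)}\le C|t-s|^{1/2}$. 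Any $\eps^7\ll\delta\ll\eps^2$ works; the paper uses $\delta=\eps^6$. With this, your Arzel\`a--Ascoli route (equicontinuity up to an $o(1)$ error, plus diagonal extraction over a countable dense family of $\eta$) is a valid alternative to the paper's route, which defines $\mu(\cdot,t)$ at Lebesgue times, extends it by continuity, and then freezes.

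Note also that the plain time shift extended by $g$ only reproduces the global bound $\Jc_\eps(v_\eps)\le C\eps^4$. The local-in-time estimates need the two-sided reparametrizations $t\mapsto t+s\phi(t)$ with $\phi$ supported in $(0,+\infty)$, which give $E_\eps'=-2I_\eps$. Your remark that (iii) naturally comes out with $\{g\gtrless u_M\}$ is correct.
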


The above \Cref{t:main} contains an existence theorem for weak solutions $(u,\mu)$ of the Stefan problems with Dirichlet and Neumann boundary conditions. In the next two corollaries we give the precise statement in the case $h\equiv0$, which generates solutions with initial mushy coefficient
$$\mu(x,0)=\ind_{\Omega_g^+}(x)-\ind_{\Omega_g^-}(x),$$
which is the one classically associated to the initial temperature $u(0,x)=g(x)$.

\begin{corollary}[Existence of weak solutions with Dirichlet boundary conditions]\label{cor:existence-weak-solutions-Dirichlet}
Let $D$ be an open set in $\R^d$ and let $u_M:D\times[0,+\infty)\to\R$ be a Lebesgue measurable function. Let $g\in H^1(D)$ be a given initial datum and $F\in L^2(D;\R^d)$. Then, there are functions $u\in\mathcal U_{\mathcal D}(U,g)$ and $\mu\in L^\infty(D\times[0,+\infty))$ such that $(u,\mu)$ is an enthalpy solution of the 
Stefan problem (in the sense of \Cref{def:enthalpy_solution-F}) with:
\begin{itemize}[--]
\item Dirichlet boundary conditions on $\partial D$: $u(\cdot,t)-g\in H^1_{0}(D)$ for all $t\ge 0$;
\item initial temperature $u(x,0)=g(x)$;
\item initial mushy coefficient 
$\mu(x,0)=\ind_{\Omega_g^+}(x)-\ind_{\Omega_g^-}(x).$
\end{itemize}
\end{corollary}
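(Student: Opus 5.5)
This follows directly from \Cref{t:main} with $h\equiv 0$. One point needs attention first. The corollary allows a melting temperature $u_M$ on $D\times[0,+\infty)$, while \Cref{t:main} is stated for $u_M:D\to\R$. The intended setting is the time-independent one, where the compatibility condition \eqref{e:compatibility_enthalpy-phases} reads with $u_M(x)$. So I will apply \Cref{t:main} with $u_M=u_M(x)$. If a truly time-dependent $u_M$ were allowed, the theorem as stated would not cover it; I treat the corollary as the case $u_M(x,t)=u_M(x)$, in line with \Cref{oss:about-the-drift-term-F}.

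\textbf{Step 1.} I fix $h\equiv0$, which satisfies $|h|\le1$. \Cref{t:main} then gives a two-sided Heaviside approximation $\{p_\eps\}$ centered in $(u_M,0)$. By \Cref{prop:existence-J-epsilon} there exist minimizers $u_\eps$ of \eqref{e:efunctional} in $\Uc_{\mathcal D}(D,g)$. Applying \Cref{t:main} yields a subsequence $\eps_n\to 0$ and a limit pair $(u,\mu)$ with $u\in\Uc_{\mathcal D}(D,g)$ and $\mu\in L^\infty(D\times[0,+\infty))$.

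\textbf{Step 2.} I check items (i)--(iv) of \Cref{def:enthalpy_solution-F}.
\begin{itemize}
\item Regularity (i): $u\in\Uc$ means $u\in H^1(D_T)$ for every $T$. By Fubini, $\partial_t u\in L^2(D_T)$ and $u\in L^2((0,T);H^1(D))$, which gives the local-in-time regularity on $[0,\infty)$.
\item Continuity (ii): this is item \ref{item:main_thm:enthalpy:a}.
\item Compatibility (iii): this is item \ref{item:main_thm:enthalpy:b}.
\item Weak formulation (iv): identity \eqref{e:weak-equation-F} with Dirichlet test functions is item \ref{item:main_thm:limit_eq}.
\end{itemize}

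\textbf{Step 3: boundary data.} Membership $u\in\Uc_{\mathcal D}(D,g)$ means $u-g\in H^1_{0,P}(D_t)$ for all $t$. I need the following consequences.
\begin{itemize}
\item The trace of $u$ at $t=0$ is $g$.
\item $u(\cdot,t)-g\in H^1_0(D)$ for every $t$.
\end{itemize}
Take $\phi_k\in C^\infty_c(D\times(0,\infty))$ with $\phi_k\to u-g$ in $H^1(D_T)$. Each $\phi_k(\cdot,t)$ lies in $C^\infty_c(D)$, and $\phi_k(\cdot,0)=0$. Convergence in $H^1(D_T)$ implies convergence in $C([0,T];L^2(D))$, so the time slices converge in $L^2$. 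This yields $u(\cdot,0)=g$.

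For the $H^1_0$ statement at every $t$, I use the bound \ref{item:main_thm:temperature}: $u(\cdot,t)\in H^1(D)$ with $\nabla u(\cdot,t)$ uniformly bounded in $L^2$. For almost every $t$, $u(\cdot,t)-g\in H^1_0(D)$ by Fubini applied to the approximations. For a general $t$, pick $t_j\to t$ from this full-measure set. The slices $u(\cdot,t_j)-g$ converge to $u(\cdot,t)-g$ in $L^2$ and are bounded in $H^1$, so they converge weakly in $H^1$. Since $H^1_0(D)$ is weakly closed, the limit lies in $H^1_0(D)$. This step is the only mildly delicate point.

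\textbf{Step 4: initial mushy coefficient.} Item \ref{item:main_thm:initial-mushy-coefficient} with $h=0$ gives $\mu(\cdot,0)=\ind_{\Omega_g^+}-\ind_{\Omega_g^-}$ almost everywhere. I expect no real obstacle here beyond the boundary-trace verification in Step 3.
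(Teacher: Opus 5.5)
Your proposal is correct and matches the paper's argument. The paper obtains the corollary directly from \Cref{t:main} with $h\equiv 0$, and your Step~3 (the trace $u(\cdot,0)=g$, and $u(\cdot,t)-g\in H^1_0(D)$ for every $t$ via the uniform gradient bound and weak closedness of $H^1_0(D)$) makes explicit what the paper leaves implicit in $u\in\Uc_{\mathcal D}(D,g)$.

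In the same spirit as your remark on $u_M$, note that in Step~4 the sets $\Omega_g^\pm$ must be read as $\{\pm(g-u_M)>0\}$, which is what Step~12 of the paper's proof actually produces. Read literally as $\{\pm g>0\}$, the prescribed $\mu(\cdot,0)$ would contradict the compatibility condition at $t=0$ on sets such as $\{0<g<u_M\}$ whenever these have positive measure.
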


\begin{corollary}[Existence of weak solutions with Neumann boundary conditions]\label{cor:existence-weak-solutions-Neumann}
	Let $D$ be an open set in $\R^d$ and let $u_M:D\times[0,+\infty)\to\R$ be a Lebesgue measurable function. Let $g\in H^1(D)$ be a given initial datum and $F\in L^2(D;\R^d)$. Then, there are functions $u\in\mathcal U_{\mathcal N}(U,g)$ and $\mu\in L^\infty(D\times[0,+\infty))$ such that $(u,\mu)$ is an enthalpy solution of the 
	Stefan problem (in the sense of \Cref{def:enthalpy_solution-F}) with:
	\begin{itemize}[--]
			\item Neumann boundary conditions on $\partial D$, that is, $u(\cdot, t)\in H^1(D)$ for all $t\ge 0$ and \eqref{e:weak-equation-F} holds for all test functions $\eta\in C^\infty_c(\R^d\times[0,+\infty))$;
		\item initial temperature $u(x,0)=g(x)$;
		\item initial mushy coefficient 
		$\mu(x,0)=\ind_{\Omega_g^+}(x)-\ind_{\Omega_g^-}(x).$
	\end{itemize}
\end{corollary}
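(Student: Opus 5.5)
The plan is to obtain the corollary directly from \Cref{t:main}, applied in the Neumann class $\Uc_{\mathcal N}(D,g)$ with the choice $h\equiv 0$. This $h$ is admissible, since $|h|\le 1$. We read the melting temperature $u_M$ as a function of $x$ only, which is the setting of \Cref{t:main} and of \Cref{def:enthalpy_solution-F}. \Cref{t:main} provides a two-sided Heaviside approximation $\{p_\eps\}$ centered in $(u_M,0)$. By \Cref{prop:existence-J-epsilon} there are minimizers $u_\eps$ of \eqref{e:efunctional} in $\Uc_{\mathcal N}(D,g)$. \Cref{t:main} then gives a sequence $\eps_n\to0$ and a limit pair $(u,\mu)$ with $u\in\Uc_{\mathcal N}(D,g)$ and $\mu\in L^\infty(D\times[0,+\infty))$. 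It remains to check the conditions of \Cref{def:enthalpy_solution-F} one by one, with $T=+\infty$.

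\emph{Regularity (i).} Since $u\in\Uc$, we have $u\in H^1(D_T)$ for every $T>0$. Hence $\nabla u\in L^2(D_T)$, $\de_t u\in L^2(D_T)$ and $u\in L^2(D_T)$. By Fubini this gives $u\in L^2_{loc}([0,+\infty);H^1(D))$ and $\de_t u\in L^2_{loc}([0,+\infty);L^2(D))$. In addition, \ref{item:main_thm:temperature} gives $u(\cdot,t)\in H^1(D)$ for every $t\ge0$, which is the Neumann boundary requirement stated in the corollary.

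\emph{Conditions (ii)--(iv).} The weak-$\ast$ continuity of $t\mapsto\mu(\cdot,t)$ is exactly \ref{item:main_thm:enthalpy:a}. The compatibility condition \eqref{e:compatibility_enthalpy-phases} is exactly \ref{item:main_thm:enthalpy:b}. The integral identity \eqref{e:weak-equation-F} is \ref{item:main_thm:limit_eq}.

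\emph{Test functions.} \ref{item:main_thm:limit_eq} gives \eqref{e:weak-equation-F} for $\eta\in C^\infty_c(\R^d\times\R)$, while the corollary asks for $\eta\in C^\infty_c(\R^d\times[0,+\infty))$. These give the same identity. Indeed, only the values of $\eta$ on $D\times[t_1,t_2]$ with $t_1,t_2\ge0$ enter \eqref{e:weak-equation-F}. Moreover, a smooth function with compact support in $\R^d\times[0,+\infty)$ (smooth up to $t=0$) admits a smooth compactly supported extension to $t<0$, for instance by Seeley extension.

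\emph{Initial data.} The initial temperature $u(x,0)=g(x)$ holds because $u$ belongs to the class $\Uc_{\mathcal N}(D,g)$. The initial mushy coefficient comes from \ref{item:main_thm:initial-mushy-coefficient} with $h\equiv0$: for a.e. $x\in D$,
\[
\mu(x,0)=\ind_{\Omega_g^+}(x)-\ind_{\Omega_g^-}(x).
\]

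There is no real obstacle here, since all the substantive work is contained in \Cref{t:main}. The only points that need care are the two bookkeeping steps above: passing from $H^1(D_T)$ to the Bochner-type regularity in (i), and matching the class of test functions.
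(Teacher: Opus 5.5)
Your proposal is correct and follows the paper's route exactly: the corollary is the case $h\equiv0$ of \Cref{t:main} in the class $\Uc_{\mathcal N}(D,g)$. Your extra bookkeeping (time-independent $u_M$, the Bochner-type regularity, and extending test functions from $t\ge0$ to $t\in\R$) only makes explicit what the paper leaves implicit. One caveat, inherited from the paper's own statements rather than a flaw in your argument: when $u_M\not\equiv0$, the sets $\Omega_g^\pm$ in \ref{item:main_thm:initial-mushy-coefficient} and in the corollary must be read as $\{g>u_M\}$ and $\{g<u_M\}$. This is what the pointwise limit of $p_\eps(g(x),x)$ in Step 12 of the proof of \Cref{t:main} actually gives, and the literal sets $\{\pm g>0\}$ would contradict the compatibility condition \ref{item:main_thm:enthalpy:b} at $t=0$.
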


\begin{remark}[On the history of the weak solutions]
The first existence results for weak (distributional) solutions of the Stefan problem were established by Kamin \cite{Kamenomostskaya} (for $d \le 3$) and Oleinik \cite{Oleinik60} (for $d > 3$), who showed 
the existence of functions $(u,\mu)$, defined in bounded smooth domains $D$, satisfying $u(\cdot,t) \in L^\infty(D)$, for all $t \ge 0$, and 
\begin{equation}\label{e:Stefan_russian}
	\int_{D} (u+\mu)\eta \dif x\bigg|_{t=t_1}^{t_2} = \int_{t_1}^{t_2} \int_D \left( (u+\mu) \partial_t \eta + u \Delta \eta \right) \dif x \dif t 
	- \int_{t_1}^{t_2}\int_{\partial{D}} g\,\partial_{\nu_{\partial{D}}} \eta \dif \mathcal{H}^{d-1}x \dif t,
\end{equation}
for all test functions $\eta \in C^\infty_c(D\times\R)$. 
The existence of enthalpy solutions with $u\in H^1$ are due to Ladyzhenskaya, Solonnikov, and Ural’tseva \cite[Ch.~V, \S~9]{Ladyzhenskaya_Solonnikov_Uraltseva68:Parabolic_equation_BOOK}, to Friedman \cite{Friedman_1968} (under some further assumptions on the boundary data), and later to Cannon and DiBenedetto \cite{CannonDiBenedetto1980:ExistenceStefan}. In all \cite{Ladyzhenskaya_Solonnikov_Uraltseva68:Parabolic_equation_BOOK,Friedman_1968,CannonDiBenedetto1980:ExistenceStefan} the authors considered $\mu = \mu(u)$ as a multivalued function of the temperature, and therefore the existence of weak solutions is obtained under the condition that the zero set is empty at the initial time ($|\{g=0\}| =0$) in which case the initial mushy coefficient is determined almost-everywhere from the compatibility condition.
In the early 90s, G\"otz and Zaltzman \cite{GotzZaltzman:NonincreaseMushyInhomogeneousPb} (see also a simplified proof by Andreucci \cite{Andreucci2004:StefanNotes}) managed to prove an existence theorem with possibly non-empty mushy region, but required the initial temperature in $L^\infty(D)$. In \Cref{t:main}
 we not only prove the existence of $(u,\mu)$ satisfying an integral equation, but we show that the mushy coefficient $\mu$ is weakly-$\ast$ continuous in time and is well defined at time $0$, which is fundamental for the validity of the comparison principle as we will see in the next subsection. 
\end{remark}

\begin{remark}[On the $H^1$ regularity of the temperature] In \Cref{t:main}, our regularity assumption over the initial temperature ($g\in H^1(D)$) is stronger than the one in \cite{Ladyzhenskaya_Solonnikov_Uraltseva68:Parabolic_equation_BOOK,CannonDiBenedetto1980:ExistenceStefan} (where the assume $g\in L^2(D)$) and the one in \cite{GotzZaltzman:NonincreaseMushyInhomogeneousPb} (where $g\in L^\infty(D)$), but leads to a stronger regularity result. Indeed, \Cref{t:main} provides 
\begin{equation}\label{e:intro:L-infty-in-time-L-2-in-space}
	{\nabla u \in L^\infty((0,+\infty);L^2(D;\R^d))},
\end{equation}
while in the classical theory of weak solutions for the Stefan problem we only had 
$$\nabla u \in L^2_{loc}([0,+\infty);L^2(D;\R^d)).$$ 
%
Regularity results under stricter assumptions on the initial data were also obtained by Had{\v z}i{\'c} and Shkoller in \cite{HadzicShkoller2017:Gibbs-ThomsonStefan} or by Ding, Du, and Guo in \cite{DingDuGuo2021:StefanProblemFisher-unbounded}.
However, up to our knowledge, the estimate \eqref{e:intro:L-infty-in-time-L-2-in-space} is new for unbounded sets $D\subset\R^d$. We also notice that \eqref{e:intro:L-infty-in-time-L-2-in-space} is optimal since it is the precisely
the regularity of the solutions to heat equation.
\end{remark}

\begin{remark}[Solutions with variable melting temperature]
In \Cref{cor:existence-weak-solutions-Dirichlet} and \Cref{cor:existence-weak-solutions-Neumann} we establish  existence results for solutions with general measurable melting temperatures $u_M$, which as far as we know are new in the literature. We notice that, if the melting temperature is sufficiently regular, $u_M\in H^1(D)$, this can be recovered from the result of Cannon and DiBenedetto for equations with right-hand side. 
\end{remark}

\subsection{Comparison principles and consistency of the regularization scheme}\label{sub:introduction-comparison}
This section is dedicated to the comparison principles for weak solutions in the classes $\mathcal U_{\mathcal D}(D,g)$ and $\mathcal U_{\mathcal N}(D,g)$.  Before we state our main theorems we define the notions of enthalpy  subsolutions and enthalpy supersolutions.

\begin{definition}[Dirichlet and Neumann enthalpy sub/supersolutions]\label{def:enthalpy-sub-supersol}
	Let $T^\ast \in (0,+\infty]$, $D\subset\R^d$ be an open set, $u_M : D \to\R$ be a Lebesgue measurable function, and $F \in L^2(D;\R^d)$. 
	Let also $u$ and $\mu$ be measurable functions in $D\times(0,+\infty)$.
	We say that a couple $(u,\mu)$ is an \emph{enthalpy subsolution} (resp. \emph{supersolution}) \emph{with melting temperature $u_M$, heat source $F$ and Dirichlet/Neumann boundary conditions} if it satisfies the conditions \ref{item:as:regularity}-\ref{item:as:compatibility} of \Cref{def:enthalpy_solution} and if the following integral inequality holds
	\begin{equation}\label{e:subsolution-enthalpy}
		\int_{D(t)} (u+\mu) \eta \dif x\biggl|_{t=t_1}^{t_2} \le \int_{t_1}^{t_2}\int_{D} (u+\mu) \de_t\eta - \nabla u \cdot \nabla \eta - F\cdot \nabla \eta \dif x \dif t\,,\qquad (\text{resp.}\; \ge)
	\end{equation}
	for all $0\le t_1\le t_2< T^\ast$ and all nonnegative  admissible test functions $\eta\ge 0$,  where:
	\begin{itemize}
		\item $\eta \in C^\infty_c(D\times\R)$ in the Dirichlet case;
		\item $\eta \in C^\infty_c(\R^d\times\R)$ in the Neumann case.
	\end{itemize}
\end{definition}

\begin{theorem}[Comparison principle for the Stefan problem in the Dirichlet case]\label{t:comparison-dirichlet}
	Let $T^\ast\in(0,+\infty]$, $D\subset\R^d$ be an open set, $u_M :D \to \R$ a Lebesgue measurable function, and $F \in L^2(D;\R^d)$. 
	Let $(u_1,\mu_1)$ and $(u_2,\mu_2)$ be respectively a Dirichlet enthalpy subsolution and a Dirichlet enthalpy supersolution  in $D_{T^\ast}$ with melting temperature $u_M$ and heat source $F$. Assume that
	\[
	(u_2- u_1)^- \in H^1_{0,P}(D_{T^*}) \quad \text{and} \quad \mu_1(x,0) \le \mu_2(x,0) \quad \text{for almost every} \quad x\in D.
	\]
	Then, 
	for all $T\in(0,T^\ast)$, we have:
	\[
	u_1(x,T)\le u_2(x,T) 
	\quad\text{and}\quad
	\mu_1(x,T) \le \mu_2(x,T)\quad \text{for almost every}\quad x\in D.
	\]
\end{theorem}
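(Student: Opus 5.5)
We plan to follow the classical $L^1$-contraction argument of the enthalpy formulation: subtract the two inequalities and test with a regularized version of the positive part of $(u_1+\mu_1)-(u_2+\mu_2)$. Since $\mu$ is only bounded, we will instead test with a function of $w:=u_1-u_2$ alone and exploit the monotonicity coupling between $u$ and $\mu$ given by the compatibility condition \eqref{e:compatibility_enthalpy-phases}. The key pointwise observation is the following. For a.e.\ $x$ and every $t$, if $u_1(x,t)>u_2(x,t)$, then either $u_1>u_M$ (so $\mu_1=1\ge\mu_2$) or $u_2<u_M$ (so $\mu_2=-1\le\mu_1$). Hence
\[
\mu_1\ge\mu_2\quad\text{on}\quad\{u_1>u_2\},\qquad\text{and so}\qquad \big((u_1+\mu_1)-(u_2+\mu_2)\big)^+\ \ge\ (u_1-u_2)^+ .
\]
Moreover, $\{(u_1+\mu_1)-(u_2+\mu_2)>0\}$ coincides, up to points with $u_1=u_2$, with $\{u_1> u_2\}\cup\{u_1=u_2,\ \mu_1>\mu_2\}$.

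\textbf{Step 1 (Kato-type inequality).} Subtract the supersolution inequality from the subsolution one; the source term $F$ cancels. For nonnegative $\eta$ this gives, with $v:=u_1+\mu_1-u_2-\mu_2$,
\[
\int_D v\,\eta\,\Big|_{t_1}^{t_2}\le\int_{t_1}^{t_2}\!\int_D v\,\de_t\eta-\nabla w\cdot\nabla\eta .
\]
We want to upgrade this to $\int_D v^+\xi\,|_{0}^{T}\le\int_0^T\!\int_D v^+\de_t\xi-\nabla w^+\cdot\nabla\xi$ for spatial cutoffs $\xi\ge0$. To do this we first extend the inequality, by density and the $H^1$/$\de_t u\in L^2$ regularity \ref{item:as:regularity}, to nonnegative test functions in $H^1$ with compact support in $\overline D\times[0,T]$ vanishing on $\de D$. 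We then take $\eta=\xi\,H_\delta(w)$, where $H_\delta(s)=\min(1,s^+/\delta)$ and $\xi$ is a cutoff. The admissibility of this test function uses $(u_2-u_1)^-=w^+\in H^1_{0,P}$. The gradient term yields $-\int H_\delta'(w)|\nabla w|^2\xi\le0$ plus $-\int H_\delta(w)\nabla w\cdot\nabla\xi$. The time-derivative term must be handled with Steklov averages in time, since $\mu$ is not differentiable. We write
\[
v\,\de_t(H_\delta(w))=(w+\mu_1-\mu_2)\,\de_t H_\delta(w),
\]
and use that $\mu_1-\mu_2\ge0$ wherever $H_\delta(w)>0$, together with the monotonicity of $\mu_i$ in $u_i$, so that the $\mu$-part produces a term of the right sign. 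This is the standard Oleinik/Kamin device. Letting $\delta\to0$, $H_\delta(w)\to\ind_{\{w>0\}}$, and $v\ind_{\{w>0\}}\le v^+$. The set $\{w=0,\ \mu_1>\mu_2\}$ is controlled using $\mu$-monotonicity again.

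\textbf{Step 2 (Gronwall / localization).} We take $\xi=\xi_R$, equal to $1$ on $B_R$, with $|\nabla\xi_R|\le 1/R$. Then
\[
\int_D v^+(T)\xi_R\le\int_D v^+(0)\xi_R+\frac1R\int_0^T\!\!\int_D|\nabla w^+|.
\]
The initial term vanishes, because $w^+(0)=0$ (since $w^+\in H^1_{0,P}$) and $\mu_1(\cdot,0)\le\mu_2(\cdot,0)$; here we use the weak-$\ast$ continuity \ref{item:as:continuity} to evaluate at $t_1=0$. On unbounded $D$ the error term is not obviously finite, since $\nabla w^+$ is only in $L^2$. We therefore instead use $\xi_R^2$ and Cauchy--Schwarz, together with the gradient term $\int H'_\delta|\nabla w|^2$ retained from Step 1, or test directly with $w^+\xi_R^2$, to absorb the error and show it tends to $0$ as $R\to\infty$. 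This yields $v^+(T)=0$ a.e., hence $u_1\le u_2$ (using $v^+\ge w^+$) and then $\mu_1\le\mu_2$ a.e.\ at every $T$, using the continuity in time of $\mu$.

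\textbf{Main obstacle.} The main difficulty is the rigorous justification of the chain rule in time in Step 1: $\mu$ has no time regularity beyond weak-$\ast$ continuity, so we must work with Steklov averages or time-difference quotients. We would first try the discrete version
\[
\big(v(t+h)-v(t)\big)H_\delta(w(t+h))\ \ge\ \Phi_\delta(v(t+h))-\Phi_\delta(v(t))\quad\text{up to }o(1),
\]
obtained from the monotone coupling of $\mu$ and $u$, and then pass to the limit. A secondary difficulty is controlling the cutoff error on unbounded $D$ with only $L^2$ gradients.
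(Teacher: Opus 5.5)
Your pointwise observation is correct and is the structural input the paper also uses: $\mu_1\ge\mu_2$ on $\{u_1>u_2\}$. In the paper's notation this reads $U=u_2-u_1=AV$ with $0\le A\le 1$. There is, however, a genuine gap in Step~1, exactly at the point you call the main obstacle, and it is not only a matter of justification. Consider the discrete inequality
\[
\big(v(t+h)-v(t)\big)H_\delta(w(t+h))\ \ge\ \Phi_\delta(v(t+h))-\Phi_\delta(v(t)).
\]
It cannot follow from the monotone coupling of $\mu$ and $u$. At a point where $u_1=u_2=u_M$ at both times, the compatibility condition imposes nothing on $\mu_1,\mu_2$. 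So one may have $v(t)=0$ and $v(t+h)=1$ while the left-hand side is $0$. The error is $O(1)$, not $o(1)$. Requiring the inequality for all such configurations forces $\Phi_\delta$ to be constant on $[-2,2]$, which erases all information on $\mu_1-\mu_2$. Structurally, $w$ is not a function of $v$, so $H_\delta(w)$ is not the derivative of any convex function of $v$.

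The problem persists even at the formal level. $H_\delta(w)\to\ind_{\{w>0\}}$ does not see the set $\{w=0,\ \mu_1>\mu_2\}\subset\{v>0\}$. When $\{w(\cdot,t)>0\}$ advances into that set, $\int_{\{w>0\}}v\,\xi$ increases by absorbing positive values of $v$ that $\langle\de_t v,\ind_{\{w>0\}}\xi\rangle$ does not account for. So testing with $\xi H_\delta(w)$ gives no differential inequality for $\int v^+\xi$. You also cannot test with a function of $v$ instead: $\mu$ has no spatial regularity, so the term $\nabla w\cdot\nabla(\cdot)$ would be meaningless. Steklov averages do not resolve this dilemma. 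A Kato inequality for two different solutions of $\de_t b=\Delta\phi(b)$ needs a doubling of the time variable (Kru\v{z}kov, Carrillo, Otto), and your plan does not contain one.

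The paper avoids any chain rule in time by duality, which is what the classical Kamin device actually is. Using $U=AV$ and integrating by parts in space on smooth $D^n\uparrow D$, the difference inequality becomes $\int_{D^n(T)}V\phi\ge\iint_{D^n_T}V(\de_t\phi+A\Delta\phi)$ plus a lateral boundary term. That boundary term can be discarded because $U^-\in H^1_{0,P}$ and $\de_\nu\phi\le0$.

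One then takes $\phi$ solving the regularized backward problem $\de_t\phi+(\eps e^{-|x|}+A_n)\Delta\phi=\phi$, with $\phi(\cdot,T)=\phi_0$ and zero lateral data. The weighted bound $\iint(\eps e^{-|x|}+A_n)|\Delta\phi|^2\le C$ kills the error terms $V(A_n-A)\Delta\phi$ and $\eps e^{-|x|}V\Delta\phi$. What remains is $\int V(T)\phi_0\ge\iint V\phi\ge\iint V^-$, and Gr\"onwall applies after letting $\phi_0\to\ind_{\{V(\cdot,T)<0\}}$.

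This uses only the linear structure of the inequality and the values of $\mu$ at $t=0$ and $t=T$ given by the definition, never a time derivative of $\mu$. For unbounded $D$, the paper replaces the zero-order coefficient $1$ by $B_\eps=|U_\eps^-\vee(-1)|^2$ so that the Gr\"onwall quantity is integrable. This also takes the place of your cutoff argument in Step~2.
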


\begin{theorem}[Comparison principle for the Stefan problem in the Neumann case]\label{t:comparison-neumann}
	Let $D\subset\R^d$ be an open with boundary $\de D$, which is locally $C^{1,1}$ regular. Let $T^\ast\in(0,+\infty]$, $u_M :D \to \R$ be a Lebesgue measurable function, and $F \in L^2(D;\R^d)$. 
	Let $(u_1,\mu_1)$ and $(u_2,\mu_2)$ be a Neumann enthalpy subsolution and a Neumann enthalpy supersolution in $D_{T^\ast}$ with melting temperature $u_M$ and heat source $F$. Furthermore, assume that
	\[
	u_1(x,0) \le u_2(x,0) \quad \text{and} \quad \mu_1(x,0) \le \mu_2(x,0) \quad \text{for almost every} \quad x\in D.
	\]
	Then, for all $T\in(0,T^\ast)$, we have
	\[
	u_1(x,T)\le u_2(x,T) 
	\quad\text{and}\quad
	\mu_1(x,T) \le \mu_2(x,T)\quad \text{for almost every}\quad x\in D.
	\]
\end{theorem}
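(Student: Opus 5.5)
We prove the Neumann comparison principle with the $L^1$-contraction (Kružkov/Oleinik-type) argument used for the Dirichlet case in \Cref{t:comparison-dirichlet}. The only differences are the boundary treatment and the test functions, which are now allowed to be non-zero on $\partial D$. Set $w:=u_1-u_2$ and $\nu:=\mu_1-\mu_2$. Subtracting the inequality \eqref{e:subsolution-enthalpy} for the supersolution from the one for the subsolution, the heat source $F$ cancels, and we get
\[
\int_{D}(w+\nu)\eta \dif x\Big|_{t_1}^{t_2}\le\int_{t_1}^{t_2}\!\!\int_D (w+\nu)\de_t\eta-\nabla w\cdot\nabla\eta\dif x\dif t
\]
for all nonnegative $\eta\in C^\infty_c(\R^d\times\R)$.

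\textbf{Step 1: extension of the inequality.} By density, the inequality extends to nonnegative $\eta\in H^1(D_T)$ with compact support in $\overline D\times\R$. This uses \ref{item:as:regularity} ($\de_tw\in L^2$, $w\in L^2H^1$) and the boundedness of $\nu$. The $C^{1,1}$ regularity of $\partial D$ enters here: it guarantees that restrictions of $C^\infty_c(\R^d\times\R)$ functions are dense in $H^1$ of the closure, and it is also needed for the cutoff argument below.

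\textbf{Step 2: choice of the test function.} We use the Lipschitz truncation
\[
\eta=H_\delta(w)\,\phi_R(x), \qquad H_\delta(z)=\min(1,z^+/\delta),
\]
where $\phi_R$ is a spatial cutoff equal to $1$ on $B_R$. To handle the time derivative we use Steklov averages in time, which is rigorous since $\de_t w\in L^2$. The diffusion term produces
\[
-\iint H_\delta'(w)|\nabla w|^2\phi_R \;-\;\iint H_\delta(w)\nabla w\cdot\nabla\phi_R .
\]
The first term is $\le0$. The second tends to $0$ as $R\to\infty$, since $\nabla w\in L^2$ and $|\nabla\phi_R|\le 1/R$.

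\textbf{Step 3: the time terms.} The time part is the delicate one, because $\nu$ is not differentiable in time. The key structural fact is the compatibility \ref{item:as:compatibility} at each time $t$. On the set $\{w>0\}$ we have $u_1>u_2$, so either $\mu_1=1\ge\mu_2$ or $\mu_2=-1\le\mu_1$; in both cases $\nu\ge0$. Hence, as $\delta\to0$, the expression $(w+\nu)H_\delta(w)$ tends to $(w+\nu)\ind_{\{w>0\}}$, and this is dominated by $(w+\nu)^+$. We want to show that $t\mapsto\int_D (w+\nu)^+\phi_R$ is nonincreasing. The plan is to use the doubling-free form of the argument: test with $\eta=\phi_R\,\chi(t)$ against a nonnegative function of $(w+\nu)$, namely a smooth approximation $S_\delta$ of the positive part of $w+\nu$. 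Since $w+\nu$ is only weak-$\ast$ continuous in time, this chain rule must be justified. We do it through the monotone relation: $\nu$ is a monotone graph function of $w$ pointwise in the sense above, so $S_\delta'(w+\nu)$ coincides with $H_\delta(w)$ up to a set where $\nu$ varies with $w=0$. On the set $\{w=0\}$ the contribution is $\int \nu^+ \de_t(\cdot)$; we control it by monotonicity of $\nu\mapsto\nu^+$ together with the weak-$\ast$ continuity \ref{item:as:continuity}. We also use \ref{item:as:continuity} to pass $t_1\to0$, which gives
\[
\int_D (w+\nu)^+(\cdot,0)\,\phi_R=0
\]
by the hypotheses $u_1(\cdot,0)\le u_2(\cdot,0)$ and $\mu_1(\cdot,0)\le\mu_2(\cdot,0)$.

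\textbf{Step 4: conclusion and main obstacle.} Letting $\delta\to0$ and then $R\to\infty$ yields $\int_D (w+\nu)^+(\cdot,T)\dif x\le0$. By compatibility, $w$ and $\nu$ cannot have opposite strict signs on sets of positive measure in a way that cancels: where $w>0$ we have $\nu\ge0$. Therefore $(w+\nu)^+=0$ forces both $w\le0$ and $\nu\le0$ a.e., and this holds at every $T$ thanks to the everywhere-in-time definition of $\mu$. We expect Step 3 to be the main obstacle: the rigorous time-derivative/chain-rule step for $(w+\nu)^+$ when $\nu$ has no time regularity. If the direct chain rule fails, we will instead use the Oleinik duality method. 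In that approach one tests with a solution of the backward adjoint problem $\de_t\eta+a\Delta\eta=0$ with Neumann condition $\partial_\nu\eta=0$ on $\partial D$. Here $C^{1,1}$ boundary regularity is exactly what provides $H^2$ regularity of this adjoint problem. The coefficient $a=w/(w+\nu)\in[0,1]$ is regularized, and the uniform error terms are controlled using $\nabla w\in L^2$.
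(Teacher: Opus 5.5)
\textbf{There is a genuine gap in your main line (Steps 1--4), at Step 3 as you suspected.} Also, the paper's Dirichlet proof is not an $L^1$-contraction argument but the duality argument of your fallback, so nothing transfers from it to Steps 2--3.

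Everything you know about $\nu=\mu_1-\mu_2$ is \emph{linear}: the integral inequality and weak-$\ast$ continuity. But $(w+\nu)^+$ is nonlinear in $\nu$. Compatibility gives only a sign correlation ($w>0\Rightarrow\nu\ge0$, $w<0\Rightarrow\nu\le0$), not a functional relation. Where $u_1>u_M=u_2$ the value $\mu_2\in[-1,1]$ is free, and on $\{u_1=u_2=u_M\}$ the difference $\nu$ is arbitrary in $[-2,2]$. So the claim that $S_\delta'(w+\nu)$ ``coincides with $H_\delta(w)$ up to a set'' has no content. Moreover, $S_\delta'(w+\nu)$ is not an admissible test function. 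Since $\nu(\cdot,t)$ is only $L^\infty$, $S_\delta'(w+\nu)\notin L^2(0,T;H^1)$, and its pairing with $\de_t(w+\nu)$ (only $\Delta w$ minus a nonnegative measure) is undefined.

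The admissible choice $\eta=H_\delta(w)\phi_R$ leaves the term $\delta^{-1}\iint_{\{0<w<\delta\}}\nu\,\de_t w\,\phi_R$. This does not vanish as $\delta\to0$: it concentrates on crossings of the level $w=0$, where $\nu$ may jump. Nor is it the time derivative of anything you control. More fundamentally, $H_\delta(w)$ is blind to $\{w=0,\ \nu>0\}$, which is exactly where the comparison of mushy coefficients (the nontrivial half of the conclusion) takes place. ``Monotonicity of $\nu\mapsto\nu^+$ plus weak-$\ast$ continuity'' gives no control there.

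The same problem breaks the initial time. Weak-$\ast$ continuity does not let you pass to the limit $t_1\to0$ in $\int_D(w+\nu)^+(\cdot,t_1)\phi_R\dif x$. Convexity only gives $\int_D(w+\nu)^+(\cdot,0)\phi_R\dif x\le\liminf_{t_1\to0}\int_D(w+\nu)^+(\cdot,t_1)\phi_R\dif x$, which is the wrong direction.

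A secondary problem: after $\delta\to0$, the cut-off term $\iint\nabla w^+\cdot\nabla\phi_R$ is only bounded by $CR^{d/2-1}\|\nabla w^+\|_{L^2((B_{2R}\setminus B_R)\times(0,T))}$, which need not vanish for $d\ge3$.

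To keep an $L^1$ approach you would have to rewrite the problem in enthalpy form, $u_i=\beta_x(u_i+\mu_i)$ with $\beta_x$ Lipschitz and nondecreasing. You would then prove a Kato inequality for $\big((u_1+\mu_1)-(u_2+\mu_2)\big)^+$ by doubling the time variable (Carrillo, Otto), and still connect it to the initial data. That is a different and considerably more delicate argument.

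\textbf{Your fallback is the right idea, and it is the paper's proof.} You test the linear inequality for $V:=U+(\mu_2-\mu_1)$, $U:=u_2-u_1$, with a nonnegative solution of a backward problem whose coefficient regularizes $A=U/V$ (your $a=w/(w+\nu)$). Its virtue is that it uses only linear information on $\nu$. The nonlinearity is absorbed into $A$, whose bounds $0\le A\le1$ are exactly what compatibility gives. The time $t=0$ enters only through $\int_D V(\cdot,0)\psi(\cdot,0)\dif x\ge0$, for which the everywhere-defined $\mu_i(\cdot,0)$ and $t_1=0$ in the definition suffice.

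To make it a proof you must supply the following.
\begin{enumerate}
\item The adjoint problem with coefficient $\eps e^{-|x|}+A_n$ on $D\cap B_n$, with Neumann condition on $\de D\cap B_n$ and Dirichlet condition on $D\cap\de B_n$.
\item $\psi\ge0$, by the maximum principle.
\item Bounds uniform in $\eps,n$ on $\sup_t\|\nabla\psi\|_{L^2}$, $\|\de_t\psi\|_{L^2}$, $\|\psi\|_{L^2}$ and $\iint(\eps e^{-|x|}+A_n)|\Delta\psi|^2$. These come from testing with $\Delta\psi$: the boundary conditions kill the boundary terms, and the $C^{1,1}$ boundary gives the $H^2$ regularity needed to integrate by parts.
\item The test with $\rho_R\psi$. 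Here $\de_\nu\psi=0$ removes $\int_{\de D}U\rho_R\,\de_\nu\psi$, which is why, unlike the Dirichlet case, no lateral ordering is needed.
\item Vanishing of the errors $\iint V\rho_R(\eps e^{-|x|}+A_n-A)\Delta\psi$ and of the cut-off terms $\iint U\nabla\rho_R\cdot\nabla\psi-\psi\nabla U\cdot\nabla\rho_R$.
\end{enumerate}
With your zero-order-free adjoint problem this yields $\int_D V(\cdot,T)\psi_0\dif x\ge0$ for all $0\le\psi_0\in C^\infty_c(D)$. The paper instead adds a zero-order term ($\psi$, or $B_\eps\psi$ with $B_\eps\approx|U^-\vee(-1)|^2$ on unbounded $D$) and closes with Gr\"onwall's lemma. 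Either way, $\mu_1(\cdot,T)\le\mu_2(\cdot,T)$ follows from $V(\cdot,T)\ge0$ and compatibility, as in your Step 4.
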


As a consequence of the comparison principle we obtain that every (Dirichlet or Neumann) weak solution $(u,\mu)$ can be obtained through the elliptic regularization scheme from \Cref{t:main}. In particular, this also means that the solutions $(u,\mu)$ obtained via the elliptic regularization scheme do not depend on the sequence $(\eps_n)_{n\ge 1}$ from \Cref{t:main}. 

\begin{corollary}[Consistency of the elliptic regularization scheme]\label{cor:every-weak-solution-is-the-limit-of-u-eps}
	Let $D\subset\R^d$ be an open set, ${u_M : D \to \R}$ be a Lebesgue measurable function, $F\in L^2(D;\R^d)$, and $h\in L^\infty(D)$ with $|h(x)|\le 1$. Then, for every  $g\in H^1(D)$, there are unique enthalpy solutions $(u,\mu)$ of the Stefan problem in the Dirichlet class ($u\in\Uc_{\mathcal D}(D,g)$) and  in the Neumann class ($u\in\Uc_{\mathcal N}(D,g)$) with:
	\begin{itemize}
	\item[--] melting temperature $u_M$;
	\item[--] heat source $F$;
	\item[--] initial temperature $u(x,0)=g(x)$;
	\item[--] initial enthalpy 	$\mu(x,0) =\ind_{\Omega_g^+}(x)- \ind_{\Omega_g^-}(x)+h(x)\ind_{D\setminus (\Omega_g^+\cup \Omega_g^-)}(x).$
	\end{itemize}
	 Furthermore, if $\{u_\eps\}_{\eps>0}$ is a family of minimizers of \eqref{e:efunctional} in $\Uc_{\mathcal D}(D,g)$ (respectively in $\Uc_{\mathcal N}(D,g)$), then:
	\begin{enumerate}[(i)]
		\item $u_{\eps} \xrightharpoonup[\eps\to 0]{} u$ weakly in $\Uc$;
	\item  $p_{\eps}(u_{\eps},\cdot) \xrightharpoonup[n
	\eps\to0]{} \mu$ weakly-$\ast$ in $L^\infty(D\times[0,+\infty))$;
	\item $p_{\eps}(u_{\eps}(\cdot,t_0),\cdot) \xrightharpoonup[\eps\to0]{\ast} \mu(\cdot,t_0)$  weakly-$\ast$ in $L^\infty(D)$, for all $t_0 \ge 0$.
	\end{enumerate}
\end{corollary}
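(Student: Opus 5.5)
The plan is to combine the existence theorem (\Cref{t:main}) with the comparison principles (\Cref{t:comparison-dirichlet,t:comparison-neumann}), and then use the standard fact that if every subsequence has a further subsequence converging to the same limit, the whole family converges.

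\emph{Existence.} We apply \Cref{t:main} with data $(D,g,h,u_M,F)$. For any family of minimizers $u_\eps$ (which exist by \Cref{prop:existence-J-epsilon}) it gives a sequence $\eps_n\to0$ and a limit pair $(u,\mu)$. Items \ref{item:main_thm:enthalpy:a}, \ref{item:main_thm:enthalpy:b} and \ref{item:main_thm:limit_eq} give conditions (ii)--(iv) of \Cref{def:enthalpy_solution-F}. Condition (i) follows from $u\in\Uc$ together with the bound \eqref{e:main_thm:temperature}. The initial temperature is $u(\cdot,0)=g$, because the classes $\Uc_{\mathcal D}(D,g)$ and $\Uc_{\mathcal N}(D,g)$ are closed under weak convergence and encode the trace at $t=0$. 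The initial enthalpy is read off from item \ref{item:main_thm:initial-mushy-coefficient}.

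\emph{Uniqueness.} Let $(u_1,\mu_1)$ and $(u_2,\mu_2)$ be two solutions with the same data. Each is both a sub- and a supersolution, since equality in \eqref{e:weak-equation-F} implies both inequalities \eqref{e:subsolution-enthalpy}.
\begin{itemize}
\item In the Dirichlet case, $u_2-u_1=(u_2-g)-(u_1-g)\in H^1_{0,P}(D_t)$ for every $t$. Since $H^1_{0,P}$ is stable under taking negative parts, $(u_2-u_1)^-\in H^1_{0,P}$.
\item The initial enthalpies coincide almost everywhere.
\end{itemize}
Applying \Cref{t:comparison-dirichlet} in both directions gives $u_1=u_2$ and $\mu_1(\cdot,T)=\mu_2(\cdot,T)$ almost everywhere for every $T$.

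The Neumann case goes the same way via \Cref{t:comparison-neumann}. There is one caveat: that theorem assumes $\partial D$ is locally $C^{1,1}$, while the corollary is stated for a general open set. For the Neumann uniqueness I would either invoke the regularity assumption implicitly, or check whether the proof of \Cref{t:comparison-neumann} only uses the boundary regularity to justify testing with $(u_1-u_2)^+$-type functions. That justification should be available anyway from $u_i\in H^1(D_T)$ and density of $C^\infty_c(\R^d\times\R)$ restrictions. This is the step I expect to be the delicate point.

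\emph{Convergence of the whole family.} Take an arbitrary sequence $\eps_k\to0$ and apply \Cref{t:main} to the minimizers $u_{\eps_k}$. Its proof works along any sequence, so it yields a further subsequence along which (i.a)--(i.c) hold with some limit $(\tilde u,\tilde\mu)$. By the existence step, $(\tilde u,\tilde\mu)$ is an enthalpy solution with the prescribed data, so by uniqueness $(\tilde u,\tilde\mu)=(u,\mu)$.

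Since weak convergence in $\Uc$ (on each $D_T$), weak-$\ast$ convergence in $L^\infty(D\times[0,+\infty))$, and weak-$\ast$ convergence in $L^\infty(D)$ at each fixed $t_0$ are all convergences of scalar quantities when tested against fixed test functions, the subsequence principle upgrades the convergence to the full limit $\eps\to0$. This gives (i)--(iii).

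The main technical point is to make sure that \Cref{t:main} indeed extracts a convergent subsequence from any prescribed sequence $\eps_k\to 0$, and that the bounds it relies on are uniform in $\eps$. Both should hold because its proof rests only on uniform energy estimates and compactness.
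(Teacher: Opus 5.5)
Your argument is the one the paper intends. The paper gives no separate proof: the corollary is derived from \Cref{t:main} together with the comparison principles. Your existence step, your Dirichlet uniqueness (with $(u_2-u_1)^-\in H^1_{0,P}(D_{T^\ast})$ for every finite $T^\ast$), and your upgrade from sub-subsequences to the whole family $\eps\to0$ are all correct.

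The one real problem is the step you flagged, and there only your first option works. Your alternative misidentifies the role of the boundary regularity in \Cref{t:comparison-neumann}. That hypothesis is not used to justify testing with truncations of $u_1-u_2$. The proof is a duality argument: the difference inequality is tested with solutions $\psi_{\eps,n}$ of backward problems $\de_t\psi+(\eps e^{-|x|}+A_n)\Delta\psi=B_\eps\psi$, with $\de_\nu\psi=0$ on $\de D\cap B_n$. The $C^{1,1}$ boundary is what gives $H^2$ bounds on $\psi_{\eps,n}$ up to $\de D$, which are needed to integrate $\nabla U\cdot\nabla\psi_{\eps,n}$ by parts. Unlike the Dirichlet case, one cannot exhaust $D$ by smooth subdomains while keeping the Neumann condition.

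Moreover, the density of restrictions of $C^\infty_c(\R^d)$ in $H^1(D)$ that you invoke fails for general open sets, for example slit domains, and there Neumann uniqueness itself fails. Take
\begin{itemize}
\item the slit disc $D=B_1\setminus\big([-\tfrac12,\tfrac12]\times\{0\}\big)\subset\R^2$;
\item $u_M\equiv0$ and $F\equiv0$;
\item $g=2+\phi$ with $\phi=J_1(kr)\sin\theta$, where $k$ is the first positive zero of $J_1'$.
\end{itemize}
Consider the Neumann heat flow on $B_1$ restricted to $D$, and the Neumann heat flow on $D$ (zero flux on both sides of the slit), each paired with $\mu\equiv1$. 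Both are enthalpy solutions in $\Uc_{\mathcal N}(D,g)$ with the same initial data, because test functions in $C^\infty_c(\R^2\times\R)$ do not see the slit. Yet they differ, since $\de_{x_2}\phi=J_1(kr)/r\neq0$ on the slit.

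So the Neumann half of the corollary has to be read under the hypothesis of \Cref{t:comparison-neumann}. The paper's derivation also goes through that theorem and needs this hypothesis too, so your first option is exactly the right one.
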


We notice that the weak-$\ast$ continuity of the mushy coefficient in zero is essential for the validity of the comparison principle. It is indeed the mushy coefficient which encodes the information about the evolution of the free boundary and allows to distinguish solutions with the same initial temperature, but with qualitatively different dynamics, as we point out in the following example. 
\begin{example}\label{ex:false-onephase}
First notice that, by \cite{Park2026:radial3Dsharp}, there are radial solutions $u_1:\R^2\times[0,+\infty)\to\R$  of the Stefan problem  with $\{u_1(\cdot,t)>0\}=B_{R(t)}$, where: 
	\begin{itemize}
		\item $R:[0,+\infty)\to(0,+\infty)$ is positive, bounded and strictly increasing;
		\item $u_1:D\times[0,+\infty)\to\R$  is non-negative and radial;
		\item $u_1$ is a classical solution of the one-phase Stefan problem, that is: 
		$$\begin{cases}
			\partial_tu_1(t,x)=\Delta u_1(x,t)\quad\text{for all}\quad (x,t)\quad\text{such that}\quad |x|<R(t),\\
			R'(t)=|\nabla u_1(R(t),t)|\quad\text{for all}\quad t>0.
		\end{cases}$$ 
	\end{itemize}
	In particular, setting 
	$$\mu_1(x,t)=\begin{cases} 
		1\quad\text{if}\quad |x|< R(t),\\
		0\quad\text{if}\quad |x|\ge R(t),
	\end{cases}
	$$
	we get a couple $(u_1,\mu_1)$, which is an enthalpy solution of the Stefan problem in $\R^2$ with $u_M=h=0$.

On the other hand, let $u_2:\R^2\times[0,+\infty)$ be the solution to the heat equation in $\R^2$ with initial datum $u_2(x,0)\le u_1(x,0)$. Then, we have 
	\[
	\int_{\R^2} u_2(x,t) \, \eta(x,t) \dif x\bigg|_{t_1}^{t_2} = \int_{t_1}^{t_2} u_2\,\de_t\eta - \nabla u_2 \cdot\nabla \eta \dif x \dif t,
	\]
	for all test functions $\eta\in C^\infty_c(\R^2\times[0,+\infty))$ and all $0\le t_1<t_2<+\infty$. Since, 
	\[
	\int_{\R^2} \eta(x,t) \dif x\bigg|_{t_1}^{t_2} = \int_{t_1}^{t_2} \de_t\eta(x,t) \dif x \dif t,
	\]
	taking as mushy coefficient $\mu_2\equiv1$ in $\R^2\times[0,+\infty)$ and summing up the two equations, we get that the couple $(u_2,\mu_2)$ is a solution of the Stefan problem \eqref{e:weak_solution}. We notice that, since $u_2$ is a solution of the heat equation in $\R^2$, the free interface $D\cap\partial\{u_2(\cdot,t)>0\}$ disappears instantly. In particular, we have that the temperatures $u_1$ and $u_2$ are ordered ($u_1\ge u_2$) at time $t=0$, but not at times $t>0$. This is not in contradiction with the comparison principle \Cref{t:comparison-dirichlet} since $\mu_1\le \mu_2$ at time zero.
\end{example}

\subsection{About the proof of the main theorem}\label{sub:about-the-proofs}

In the proof of \Cref{t:main} we will need a specific control on the behavior of the approximating functions $p_\eps$ up to their second derivatives. We will define the two-sided Heaviside approximation $p_\eps$ in \Cref{sec:functional} starting from general functions $u_M:D\to\R$ and $h\in L^\infty(D)$ with $|h|\le 1$. The key properties of our approximating functions are the following:
\begin{itemize}
	\item {\it Rate of convergence.} For every $\eps>0$, we have 
	\[
	|p_{\eps}(u_M(x),x) - h(x)|\le \eps\quad\text{for all}\quad x\in D,
	\]
	and
	$$p_{\eps}(u_M(x),x) = h(x)\quad\text{for all}\quad x\in D\quad\text{such that}\quad |h(x)|\le 1-\eps.$$
	
	\item  {\it First order estimates.} The derivative is of order $\eps^{-1}$, that is:
	\[
	\de_z p_\eps(z,x) \sim \frac1\eps \quad \text{if } \big|\big(z+\eps h(x)\big)-u_M(x)\big| \le \eps, \qquad \text{and} \qquad \de_z p_\eps(z,x) \sim 0 \quad \text{otherwise}.
	\]
	\item {\it Second order estimates.}  The function $z\mapsto p_{\eps}(z,x)$ is piecewise affine, except for two intervals of size $\eps^2$ each. In particular, the following second order estimates hold:
	\[
	|\de_{zz} p_{\eps}(z,x)| \sim \frac{1}{\eps^3} \quad \text{if } \big|\big|\big(z+\eps h(x)\big)-u_M(x)\big|-\eps\big|\le \eps^2, 
	\qquad \text{and} \qquad
	|\de_{zz} p_{\eps}(z,x)| \sim 0 \quad \text{otherwise}.
	\]
\end{itemize}

\noindent The approximating solutions $u_\eps$ in the elliptic regularization scheme are minimizers of the functional
	\begin{equation*}
		\Fc_{\eps}(u) 
		:= \iint \frac{e^{-t/\eps^4}}{\eps^4} \Biggl\{ \eps^4 \biggl[ |\de_t u|^2 + |\sqrt{\eps} \,\de_t(p_{\eps}(u,x))|^2 \biggr] + |\nabla u|^2 + 2F \cdot \nabla u \Biggr\} \dif x \dif t. 
	\end{equation*}
The weak $H^1$ convergence of the temperatures $u_\eps$ to a function $u$ follows by a uniform $H^1$ bound obtained via the argument of Serra and Tilli from \cite{Serra_Tilli_2012:wave} (see \Cref{sec:energy-estimates}). The exponential term $e^{-\sfrac{t}{\eps^4}}$ in $\mathcal F_\eps$ forces the elliptic problems to become parabolic in the limit and, arguing as in \cite{Serra_Tilli_2012:wave}, it is easy to show that $u$ satisfies the heat equation where the temperature $u$ is strictly below or strictly above the melting temperature $u_M$, that is:
$$\partial_tu=\Delta u+\text{div} F\quad\text{in}\quad\{u\neq u_M\}.$$
The conditions satisfied by $u$ on the free boundary $\partial\{u\neq u_M\}$, are encoded in the mushy coefficient $\mu(x,t)$, which we will obtain as a limit of the functions $$\mu_\eps(x,t):=p_\eps(u_\eps(x,t),x).$$ 
In fact, by construction, the family  $\{\mu_\eps\}_{\eps>0}$ is uniformly bounded in $L^\infty(D\times(0,+\infty))$, so  (up to subsequences) there exists $\mu \in L^\infty(D_\infty)$ such that 
\[
\mu_\eps \xrightharpoonup[\eps\to0]{} \mu \quad \text{weakly-$\ast$ in}\quad  L^\infty (D\times(0,T)),
\]
for all $T>0$. Of course, this information by itself is not enough to guarantee the continuity of $\mu$ in time. It is also clear that, it is not possible to obtain a stronger convergence of $\mu_\eps$ via uniform $H^1$ estimates in the spirit of \cite{Serra_Tilli_2012:wave} since the solutions of the Stefan problem typically have mushy coefficients $\mu$, which are characteristic functions (which means that they cannot be in $H^1$).\medskip

The convergence of $\mu_\eps$ and the properties of the limit function $\mu$ are the core of the proof of \Cref{t:main}, and are also where the specific choice of $p_\eps$ and the functional $\mathcal F_\eps$ come into play. 
The term $|\sqrt{\eps} \,\de_t(p_{\eps}(u,x))|^2$ in the functional $\mathcal F_\eps$ has a special role and is formally the one that allows to recover (in a weak form) the following set of Stefan conditions at the limit: 
\begin{equation*}
	\begin{cases}
		\begin{array}{rl}
	\partial_tu_+=\frac{1}{1-h(x)}|\nabla u_+|^2&\text{on}\quad\partial\{u>u_M\}\setminus \partial\{u<u_M\}\cap D,\medskip\\
		\partial_tu_-=\frac{1}{h(x)-1}|\nabla u_-|^2&\text{on}\quad\partial\{u<u_M\}\setminus \partial\{u>u_M\}\cap D,\medskip\\
			\partial_tu_++\partial_tu_-=\frac{1}2\Big(|\nabla u_+|^2-|\nabla u_-|^2\Big)&\text{on}\quad\partial\{u>u_M\}\cap \partial\{u<u_M\}\cap D.
			\end{array}
\end{cases}
\end{equation*}
Precisely, thanks to the factor $\sqrt{\eps}$, the outer variation of \eqref{e:efunctional} (see \Cref{lemma:outer-variation-epsilon}) can be written as
\[
\iint \Big(u_\eps + p_\eps(u_\eps,x)\Big) \de_t \eta - \nabla u_\eps \cdot \nabla \eta - F\cdot\nabla \eta \dif x \dif t = \Oc(\eps),
\]
for any fixed test function $\eta \in C^\infty_c(D\times(0,+\infty))$. Passing formally to the limit as $\eps\to0$, we get exactly the weak formulation of the Stefan problem \eqref{e:weak-equation-F} (we refer to \Cref{sec:convergence_Stefan} for the complete argument).

Proving the validity of the integral equation \eqref{e:weak-equation-F} does not complete the analysis as  (see \Cref{def:enthalpy_solution},  \Cref{def:enthalpy_solution-F} and the discussion in \Cref{sub:weak-solutions-definitions}) we also need to prove that $\mu(\cdot,t)$ is well-defined for every $t \ge 0$ and that the function $t\mapsto \mu(\cdot,t)$ is weakly-$\ast$ continuous as an $L^\infty(D)$-valued function. both of the above properties are not guaranteed by the weak-$\ast$ convergence in space-time. 
This issue does not appear in most of the elliptic regularization schemes present in the literature but is characteristic for evolution problems with free boundary, as recently observed in \cite{AudritoSanzPerela2025:singular-elliptic-regularization}. 
Notice that, differently from \cite{AudritoSanzPerela2025:singular-elliptic-regularization}, the free boundary of solutions of the Stefan problem does not satisfy any density estimates (see for instance  \cite{KingLaceyVasquez1995:AnglesHeleShaw,Athanasopoulos_Caffarelli_Salsa_1996a,Athanasopoulos_Caffarelli_Salsa_1996b,CaffarelliSalsa:GeomApproachToFreeBoundary,ChoiKim2006:WaitingTime}); so we develop a different approach.

The idea of the proof of the convergence of  $\mu_\eps(\cdot,t)$ is to formally test the equation for $u_\eps$ with test functions of the form $\eta \ind_{[t_1,t_2]}$ with $\eta \in C^\infty_c(D\times\R)$.
Unfortunately, this cannot be done directly as it would require a control over the $H^1(D)$-norms of the traces $u_\eps(\cdot,t)$, which we do not have.
In order to avoid the necessity of such stronger estimates on $u_\eps$, in the proof of \Cref{t:main}, we replace 
$\ind_{[t_1,t_2]}$ with a smooth $\eps$-approximation $\ind^\eps_{[t_1,t_2]}$, defined in such a way that
\begin{equation}\label{e:interval-introduction}
	\de_t \ind^{\eps}_{[t_1,t_2]} \sim \frac{1}{\eps^6}
	\left(
	\ind_{[t_1,t_1+\eps^6]} - \ind_{[t_2-\eps^6,t_2]}
	\right).
\end{equation}
The specific choice of the scale $\eps^6$ allows to control the time derivatives of $\mu_\eps$. Using this control, we are able to define $\mu(\cdot,t_0)$ as a weak-$\ast$ limit of $\mu_\eps(\cdot, t_0)$ for every time $t_0\ge0$.
More precisely, we first show that $\mu(\cdot,t_0)$ is the weak-$\ast$ limit of (right) time-averages of $p_\eps(u_\eps(x,t),x)$, that is,
\begin{align}
	\int_{D} \mu(\cdot, t_0) \,\eta(x,t) \dif x
	&=
	\lim_{\eps\to0}\;\frac{1}{\eps^6} \int_{t_0}^{t_0+\eps^6} \int_{D} p_\eps(u_\eps(x,t),x)\,\eta(x,t)  \dif x \dif t\\
&	= 
	\lim_{\eps\to0} \int_{t_0}^{t_0+\eps^6} \int_{D} p_\eps(u_\eps(x,t),x)\,\eta(x,t) \, \de_{t}\ind_{[t_0,+\infty)}^\eps(t) \dif x \dif t,
\end{align}
for all test functions $\eta$ in both $C^\infty_c(D\times\R)$ and $C^\infty_c(D)$.
At this point, we use the $H^1$-regularity of $u_\eps$ to freeze the above mean value at level $t_0$, finding that
\[
p_\eps(u_\eps(\cdot,t_0),\cdot)  \xrightharpoonup[\eps\to0]{\ast} \mu(\cdot,t_0) \qquad \text{weakly-$\ast$ in} \quad L^\infty(D)\quad \text{for all}\quad t_0\ge0.
\]
Finally, for the continuity of $\mu$ and the compatibility conditions \ref{item:as:compatibility} in \Cref{def:enthalpy_solution} , we use again \eqref{e:interval-introduction} together with the (uniform) $\sfrac12-$H\"older continuity of the traces $u_\eps(\cdot,t)$ obtained thanks to the uniform $H^1$ bounds on $u_\eps$ in space-time.

\begin{remark}[On the elliptic regularization scheme]
The existence of weak solutions via elliptic regularization has been investigated in the context of different hyperbolic and parabolic problems. The first results go back to works of J.L. Lions \cite{LionsJL65:elliptic-regularization,LionsMagenes68:BookVol1} and Ole{i}nik \cite{Oleinik64:elliptic-regularization} and since then the method has been applied to numerous parabolic problems (see for instance \cite{AudritoSerraTilli:SegregatedSolutions} and the references therein). In the hyperbolic setting, the elliptic regularization scheme was used by Serra and Tilli in \cite{Serra_Tilli_2012:wave}, where they proved a conjecture of De Giorgi's \cite{DeGiorgi1996:ConjecturesEvolution} inspired by the work of Ilmanen. In the setting of geometric evolution and free boundary problems such schemes have been used for the mean curvature flow \cite{Ilmanen1994:EllipticRegularization}, in segregation problems \cite{AudritoSerraTilli:SegregatedSolutions}, and free boundary evolution problems \cite{AudritoSanzPerela2025:singular-elliptic-regularization}. The Stefan problems combine the parabolic nature of the temperature with the hyperbolic law of the evolution of the free boundary. Our family of elliptic functionals takes into account this mixed character of the Stefan problem and, in particular, provides a method for the elliptic regularization of free boundary problems, in which the evolution of the free boundary is determined by a transport-like equation.
\end{remark}

\subsection{Plan of the paper}\label{sub:plan-of-the-paper}
In \Cref{sec:functional} we introduce the functionals involved in the elliptic regularization scheme. In \Cref{sec:energy-estimates} we prove the main $H^1$-energy estimates, which allow us to prove the temperature's convergence and regularity. 
In \Cref{sec:convergence_Stefan} we prove our main result  (\Cref{t:main}): the convergence of the approximating sequence and the \emph{mushy coefficients}.
In \Cref{sec:initial_enthalpy} we prove the comparison principles (\Cref{t:comparison-dirichlet} and \Cref{t:comparison-neumann}), which allow to prove the uniqueness of the solutions and the consistency of the elliptic regularization scheme (\Cref{cor:every-weak-solution-is-the-limit-of-u-eps}).
\Cref{sec:history} contains a brief discussion on the history of the Stefan problem: the original classical formulations (\Cref{sub:history-classical-formulation}), the development of the notion of a weak solution (\Cref{sub:weak-solutions-history}), a physical interpretation and a brief discussion on the mushy region (\Cref{sub:mushy-physical}). Finally, \Cref{sub:new-classical} is dedicated to the classical formulation associated to the weak solutions from \Cref{sub:weak-solutions-definitions}.

\section{Two-sided Heaviside approximation and approximating problems}\label{sec:functional}

Let $D\subset \R^d$ be an open set, $g \in H^1(D)$, $h\in L^\infty(D)$ with $|h(x)|\le1$, $F \in L^2(D;\R^d)$, $u_M : D\to \R$ measurable, $\Uc$, $\Uc_{\mathcal D}(D,g)$, $\Uc_{\mathcal N}(D,g)$, and \eqref{e:efunctional} as defined in \Cref{sub:introduction-elliptic}.
In this section we discuss the properties of $p_{\eps}(z,x)$ and we solve the minimization problem associated to \eqref{e:efunctional}.

\subsection{The definition of \texorpdfstring{$p_{\eps}$}{pe}}\label{sub:definition-p-f-k-epsilon}
Let $p : \R \to \R$ be defined as follows
\[
p(z) := 
\begin{cases}
    \begin{array}{ll}
        1 & \text{if}\quad z \ge 1 \\
        z & \text{if} \quad |z| \le 1\\
        -1 & \text{if}\quad z \le -1,
    \end{array}
\end{cases}
\]
and let $\rho\in C^\infty_c(\R)$ be a smooth mollifier, which is nonnegative, even, supported in the interval $(-1,1)$, and such that $\int_{\R}\rho(x)\,dx=1$. 
Then, for every $\eps,\delta>0$, and $h\in L^\infty(D)$, with $|h(x)|\le 1$, we define the rescaled functions 
\[
    \rho_\delta:\RR\to\RR\ ,\qquad \rho_\delta(z):= \frac{1}{\delta}\rho\left(\frac{z}{\delta}\right);
\]
%
\begin{equation}\label{e:p-eps-delta}
    p_{{\eps,\delta}}:\RR\times D\longrightarrow\RR\ ,\qquad  p_{{\eps,\delta}}(z,x) := (\rho_{\delta} \ast p)\big(\eps^{-1}(z-u_M(x))+h(x)\big).
\end{equation}
At the end of this section, we will impose $\delta =\eps$ and define $p_{\eps}$ as $p_{\eps, \eps}$. 
We nevertheless introduce $p_{\eps,\delta}$ for $\delta>0$, independent of $\eps>0$, to underline where and how the rescaling in the mollifier influences \eqref{e:efunctional}. 

\begin{center}
    \begin{figure}[ht]
	\begin{tikzpicture}[use Hobby shortcut,scale=1]
    \begin{scope}[shift={(-0.5,0)}]
        \draw[decoration={brace,mirror,raise=5pt},decorate,]
        (-1.2,-2.4) -- node[below=6pt] {$\eps\delta$} (-0.8,-2.4);
    \draw[decoration={brace,mirror,raise=5pt},decorate,]
        (0.8,-2.4) -- node[below=6pt] {$\eps\delta$} (1.2,-2.4);

        \clip (-3.5,-2.5) rectangle (3.5,2.5);

    \draw[fill = MidnightBlue!20, draw opacity = 0] (-1.2,-5) rectangle (-0.8,5);

    \draw[fill = MidnightBlue!20, draw opacity = 0] (0.8,-5) rectangle (1.2,5);

    \draw (-3.5,-2) -- (-1,-2) -- (1,2) -- (3.5,2);

    \draw[dashed] (-1,0) -- (-1,-2);
    \draw[dashed] (1,0) -- (1,2);

    \node at (0.5,-0.5) {$u_M(x)$};
    \draw[<->] (-1,0.2) -- (1,0.2);
    \node at (0,0.4) {$2\eps$};
    \node at (0.1,1) {$h(x)$};
    \node at (2.9,2.2) {$p_{\eps,\delta}(\cdot,x)$};

\end{scope}
    \draw[->] (-3.5,0) -- (3.5,0);
    \draw[->] (0,-2.5) -- (0,2.5);

    \end{tikzpicture}
         \caption{A graphical representation of the two-sided approximated Heavyside function at scale $\eps,\delta$.}
        \label{fig:ex:p-eps}
    \end{figure}
\end{center}

\begin{remark}
If $|h(x)|\le 1$ on $D$, we have 
    \begin{equation}\label{e:distance-h-p-eps-delta}
        |p_{\eps,\delta}(u_M(x),x) -h(x)| \le \delta.
    \end{equation}
Indeed, we have $p_{\eps,\delta}(u_M(x),x) = (\rho_\delta\ast p)(h(x)),$
    and, since $p(z) = z$ and $|\de_zp|\le1$ for $|z|\le1$ , we get precisely \eqref{e:distance-h-p-eps-delta}.    
    Moreover, if $|h(x)| \le 1-\delta$ on $D$, then by construction
        $$p_{\eps,\delta}(u_M(x),x) = h(x).$$
\end{remark}

In order to study the approximation scheme, we need an exact control on the derivatives of $p_{\eps,\delta}$. 
Let us define the re-normalized derivatives of $p_\eps$ as
\[
f_{\eps,\delta}(z) := \eps \de_zp_{\eps,\delta} (z),\qquad\text{and}\qquad
k_{\eps,\delta}(z) := (\eps^2\delta)\,\de_{zz} p_{{\eps,\delta}}(z).
\]
\begin{lemma}[Estimates on $f_{{\eps,\delta}}$ and $k_{\eps,\delta}$]\label{lemma:estimates-f-k-epsilon}
Let $p$, $\rho$, $\rho_\delta$, $p_{\eps,\delta}$, $f_{\eps,\delta}$, and $k_{\eps,\delta}$ be as above. Then,

\[
    \|f_{\eps,\delta}\|_{L^\infty(\R)}\le 1\qquad\text{and}\qquad \|k_{\eps,\delta}\|_{L^\infty(\R)}\le \|\rho'\|_{L^1(\R)}.
\]

\end{lemma}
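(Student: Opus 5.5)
The plan is to reduce both estimates to properties of the one-dimensional function $\rho_\delta\ast p$ through the chain rule. For fixed $x\in D$, write $w(z):=\eps^{-1}(z-u_M(x))+h(x)$, so that $p_{\eps,\delta}(z,x)=(\rho_\delta\ast p)(w(z))$ with $w'(z)=\eps^{-1}$. Then
\[
f_{\eps,\delta}(z)=\eps\,\de_z p_{\eps,\delta}(z,x)=(\rho_\delta\ast p)'(w(z)),\qquad k_{\eps,\delta}(z)=\eps^2\delta\,\de_{zz}p_{\eps,\delta}(z,x)=\delta\,(\rho_\delta\ast p)''(w(z)).
\]
This is exactly why the normalizations $\eps$ and $\eps^2\delta$ appear: the factors $\eps^{-1}$ and $\eps^{-2}$ from the chain rule cancel. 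It therefore suffices to show $\|(\rho_\delta\ast p)'\|_{L^\infty(\R)}\le1$ and $\delta\|(\rho_\delta\ast p)''\|_{L^\infty(\R)}\le\|\rho'\|_{L^1(\R)}$. Both bounds are uniform in $x$, in $\eps$, and in the data $u_M$, $h$.

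For the first derivative, I use that $p$ is Lipschitz with weak derivative $p'=\ind_{(-1,1)}$ almost everywhere. Hence $(\rho_\delta\ast p)'=\rho_\delta\ast p'=\rho_\delta\ast\ind_{(-1,1)}$. Since $\rho_\delta\ge0$ and $\int\rho_\delta=1$, this quantity lies in $[0,1]$, which gives $\|f_{\eps,\delta}\|_{L^\infty}\le1$.

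For the second derivative, I put the extra derivative on the mollifier: $(\rho_\delta\ast p)''=\rho_\delta'\ast p'=\rho_\delta'\ast\ind_{(-1,1)}$. Young's inequality then gives the bound $\|\rho_\delta'\|_{L^1}\|\ind_{(-1,1)}\|_{L^\infty}=\|\rho_\delta'\|_{L^1}$. The scaling $\rho_\delta'(z)=\delta^{-2}\rho'(z/\delta)$ gives $\|\rho_\delta'\|_{L^1}=\delta^{-1}\|\rho'\|_{L^1}$. Multiplying by $\delta$ yields $\|k_{\eps,\delta}\|_{L^\infty}\le\|\rho'\|_{L^1(\R)}$.

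The only delicate point is the justification of the differentiation-under-convolution identities. These hold because $p\in W^{1,\infty}(\R)$ and $\rho_\delta\in C^\infty_c$, so derivatives can be moved onto either factor. Alternatively, one can write $(\rho_\delta\ast p)'=\rho_\delta'\ast p$ and bound it directly, but passing through $p'$ is what gives the clean constant $1$.
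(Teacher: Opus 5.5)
Your proposal is correct and follows the paper's argument: for $f_{\eps,\delta}$ you put one derivative on $p$ and use $0\le p'\le 1$ with $\int\rho_\delta=1$. For $k_{\eps,\delta}$ you put one derivative on $p$ and one on the mollifier, where your $\delta\|\rho_\delta'\|_{L^1(\R)}=\|\rho'\|_{L^1(\R)}$ is the paper's identity $\|(\rho')_\delta\|_{L^1(\R)}=\|\rho'\|_{L^1(\R)}$, and the chain-rule factors $\eps^{-1},\eps^{-2}$ cancel against the normalizations exactly as in the paper (which simply sets $u_M=0$ without loss of generality).
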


\begin{proof} Without loss of generality, we assume $u_M(x)=0$.
By the definition of $p_{\eps,\delta}$, for all $x\in D$ it holds
\[
    p_{\eps,\delta}(z,x) :=\int_{\R} \rho_{\delta}\big(\big(\eps^{-1}z+h(x)\big)-y\big) \,p(y)\dif y
        =\int_{\R} \rho_\delta(y)\,p\big(\big(\eps^{-1}z+h(x)\big)-y\big)\dif y.
\]
Taking the derivative in $z$, we get 
\[
    \de_z p_{\eps,\delta}(z,x)
        =\frac{1}{\eps}\int_{\R} \rho_\delta(y)\, \de_z p \big(\big(\eps^{-1}z+h(x)\big)-y\big) \dif y
        =\frac{1}{\eps}(\rho_{\delta} \ast \de_z p) \big(\eps^{-1}z+h(x)\big),
\]
and so, since $|\de_z p|\le 1$, 
we get that $\|f_{\eps,\delta}\|_{L^\infty(\R)}\le 1$.
In order to prove the bound on $k_{\eps,\delta}$, we write  
\begin{align*}
    \de_zp_{\eps,\delta}(z,x)
        &=\frac{1}{\eps}\int_{\R} \frac1{\delta}\rho\left(\frac{\big(\eps^{-1}z+h(x)\big)-y}{\delta}\right) \de_zp(y) \dif y,
\end{align*}
and we use this formula to compute the second derivatives of $p_{\eps,\delta}$ in $z$ as follows:
\begin{align*}
\de_{zz} p_{\eps,\delta}(z,x)
    =\frac{1}{\eps^2\delta}\int_{\R} \frac{1}{\delta}\rho'\left(\frac{\big(\eps^{-1}z+h(x)\big)-y}{\delta}\right) \de_zp(y)\dif y
    =\frac{1}{\eps^{2}\delta}((\rho')_{\delta} \ast \de_z p)\big(\eps^{-1}z+h(x)\big),
\end{align*}
where $(\rho')_{\delta}(z):=\delta^{-1}\rho'(\delta^{-1}z)$.
Finally, since $\|(\rho')_{\delta}\|_{L^1} = \|\rho' \|_{L^1}$ we get
\[
    |\de_{zz}p_{\eps,\delta}(z,x)| 
        \le \frac{1}{\eps^2\delta}\|(\rho')_\delta\|_{L^1(\R)}
        = \frac{1}{\eps^2\delta}\|\rho'\|_{L^1(\R)}
    \qquad\text{for all}\quad (z,x)\in \R\times D,
\]
and this conclude the proof.
\end{proof}
In the proof of \Cref{t:main}, a key role is played by the function
\begin{equation}\label{e:definition-q-epsilon}
    q_{\eps,\delta} : \R\times D \to \R, \qquad     
    q_{\eps,\delta}(z,x) := p_{\eps,\delta}(u_M(x),x) + \int_{u_M(x)}^z \frac{1}{\eps} \big( f_{\eps,\delta} \big)^2 (\zeta) \dif \zeta,
\end{equation}
as well as its relationship with $p_{\eps,\delta}$, which we investigate in the following Lemma.
\begin{lemma}[Estimates on $q_{\eps,\delta}$]\label{lemma:estimate-q-epsilon}
Let $p$, $\rho$, $\rho_\delta$, $h$, $u_M$, $p_{\eps,\delta}$, $q_{\eps,\delta}$, $f_{\eps,\delta}$, and $k_{\eps,\delta}$ be as above. Then for all $\eps,\delta>0$ it holds the following:
\begin{equation}\label{e:first-estimate-q-epsilon}
    \|p_{\eps,\delta}-q_{\eps,\delta}\|_{L^\infty(\R\times D)}\le {\delta}.
\end{equation}
\end{lemma}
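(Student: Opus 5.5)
We will reduce the estimate to a one-variable computation for the mollified ramp $P:=\rho_\delta\ast p$. Fix $x\in D$; the constants $u_M(x)$ and $h(x)$ only translate and rescale the variable. So we make the change of variables
\[
s=\eps^{-1}\big(z-u_M(x)\big)+h(x),
\]
under which $p_{\eps,\delta}(z,x)=P(s)$. By the computation in the proof of \Cref{lemma:estimates-f-k-epsilon}, we also have $f_{\eps,\delta}(z)=P'(s)=(\rho_\delta\ast \de_z p)(s)$. The point $z=u_M(x)$ corresponds to $s=h(x)$, and $d\zeta=\eps\, ds$. Hence the definition \eqref{e:definition-q-epsilon} becomes
\[
q_{\eps,\delta}(z,x)=P(h(x))+\int_{h(x)}^{s}\big(P'(\sigma)\big)^2\dif\sigma .
\]
Since also $P(s)=P(h(x))+\int_{h(x)}^s P'(\sigma)\dif \sigma$, subtracting gives
\[
p_{\eps,\delta}(z,x)-q_{\eps,\delta}(z,x)=\int_{h(x)}^{s}P'(\sigma)\big(1-P'(\sigma)\big)\dif\sigma .
\]
The factor $\eps$ cancels exactly, which is why the bound depends only on $\delta$.

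It remains to bound the integrand and locate its support. We have $\de_z p=\ind_{[-1,1]}$ almost everywhere, and $\rho_\delta\ge0$ has unit mass and support in $(-\delta,\delta)$. Therefore $P'=\rho_\delta\ast\ind_{[-1,1]}$ takes values in $[0,1]$. Moreover:
\begin{itemize}
\item $P'\equiv1$ on $[-1+\delta,1-\delta]$;
\item $P'\equiv0$ outside $[-1-\delta,1+\delta]$.
\end{itemize}
It follows that $0\le P'(1-P')\le \tfrac14$ everywhere. The integrand vanishes except on the two intervals $[-1-\delta,-1+\delta]$ and $[1-\delta,1+\delta]$, each of length $2\delta$. (If $\delta\ge1$ these intervals overlap, but their union still has measure at most $4\delta$.)

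Consequently, for every $h(x)$ and every $s$,
\[
\Big|\int_{h(x)}^{s}P'(1-P')\dif\sigma\Big|\le \int_{\R}P'(1-P')\dif\sigma\le \tfrac14\cdot 4\delta=\delta .
\]
This bound is uniform in $(z,x)$, which gives \eqref{e:first-estimate-q-epsilon}.

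The only point needing care is verifying the change of variables and the identity for $f_{\eps,\delta}$, so that the $\eps^{-1}$ in \eqref{e:definition-q-epsilon} cancels against $d\zeta=\eps\,ds$. Everything else is elementary properties of the mollified indicator.
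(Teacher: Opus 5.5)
Your proof is correct and follows essentially the same route as the paper: write $p_{\eps,\delta}-q_{\eps,\delta}$ as $\eps^{-1}\int f_{\eps,\delta}(1-f_{\eps,\delta})$, then use that this integrand is bounded and vanishes outside two transition layers of total rescaled length at most $4\delta$. Your version is slightly cleaner than the paper's, which splits into cases on the sign of $z+\eps h(x)$ and bounds $f(1-f)$ by $\tfrac12$: rescaling, bounding by the full integral over $\R$, and using the sharp constant $\tfrac14$ also covers the case where $h(x)$ lies within $\delta$ of $\pm1$, so that the integration interval starting at $u_M(x)$ meets both layers.
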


\begin{proof} Without loss of generality we assume $u_M(x) =0$.
By \Cref{lemma:estimates-f-k-epsilon} we know that 
\begin{align*}
f_{\eps,\delta}(z,x)=\eps \de_z p_{\eps,\delta}(z,x)&=\int_{\R}\rho_{\delta}\big(\big(\eps^{-1}z+h(x)\big)-y\big)\de_z p(y)\dif y.
\end{align*}
Since $\de_zp(z)=\ind_{[-1,1]}(z),$
and $0\le f_{\eps,\delta}\le 1$ in $\R$, we have that
\begin{equation*}
    f_{\eps,\delta}(z,x) = 1 \quad \text{if}\quad |z+\eps h(x)|\le(1-\delta)\eps \qquad\text{and}\qquad f_{\eps,\delta}(z,x) = 0 \quad\text{if}\quad |z+\eps h(x)|> (1+\delta)\eps.
\end{equation*}
Therefore, for any $z+\eps h(x) \ge0$ and $x\in D$ it holds the following estimate
\begin{align*}
|p_{\eps,\delta}(z,x) - q_{\eps,\delta}(z,x)| &= 
    \frac{1}{\eps}\left|\int_0^z f_{\eps,\delta}(\zeta,x) - (f_{\eps,\delta})(\zeta,x)^2 \dif\zeta\right|\\
    & \le 
        \frac{1}{\eps}\left|\int_{-\eps h(x)+(1-\delta)\eps}^{-\eps h(x)+(1+\delta)\eps}f_{\eps,\delta}(\zeta,x)(1-f_{\eps,\delta}(\zeta,x))\dif \zeta \right|
    \le {\delta},
\end{align*}
since $f_{\eps,\delta}(\zeta,x)(1-f_{\eps,\delta}(\zeta,x))\le \frac12$. Similarly, for $z+\eps h(x) \le0$, we have  
\begin{align*}
|p_{\eps,\delta}(z,x) - q_{\eps,\delta}(z,x)| 
    & \le 
        \frac{1}{\eps}\left|\int^{-\eps h(x)-(1-\delta)\eps}_{-\eps h(x)-(1+\delta)\eps}f_{\eps,\delta}(\zeta,x)(1-f_{\eps,\delta}(\zeta,x))\dif \zeta \right|
    \le {\delta},
\end{align*}
which concludes the proof.
\end{proof}
We now set $\delta = \eps$ and define the functions $p_\eps, f_\eps, k_\eps,$ and $q_\eps$ as
\begin{equation}\label{e:definition-p-f-k-q-epsilon}
    p_\eps(z,x) := p_{\eps,\eps}(z,x), \quad f_\eps(z,x) := f_{\eps,\eps}(z,x), \quad k_\eps(z,x) := k_{\eps,\eps}(z,x), \quad 
    q_\eps(z,x) := q_{\eps,\eps}(z,x).
\end{equation}
In what follows, we shall often omit the explicit dependence on $x$ for the sake of brevity.

\subsection{The rescaled functional \texorpdfstring{$\Jc_\eps$}{Je}}
In the elliptic regularization scheme, it is convenient to introduce the following rescaled version of the functional \eqref{e:efunctional}.
Let $h\in L^\infty(D)$ with $|h|\le 1$, $u_M : D \to \R$ measurable, and $F \in L^2(D;\R^d)$.
For all $\eps>0$ we define $\Jc_\eps : \mathcal{U} \to \R \cup \{+\infty\}$ as
\begin{equation}\label{e:efunctional-rescaled}\tag{$\Jc_\eps$}
     \Jc_\eps(v) := \iint e^{-t} \Biggl\{ |\de_t v |^2 +  \big| \sqrt{\eps}\de_t(p_\eps(v,x)) \big|^2 + \eps^4 |\nabla v |^2 + 2\eps^4 F\cdot  \nabla v \Biggr\} \dif x \dif t,
\end{equation}
setting $\Jc_\eps(v) = +\infty$ if the integral diverges.
Through the same computation of the case $\Fc_\eps$, it holds 
\begin{equation}\label{e:eJ-lower-bound}
    \Jc_\eps(v) \ge -\eps^4\, \| F\|_{L^2(D)}^2  \qquad\text{for all}\quad v \in \Uc.
\end{equation}
By applying the change of variables $s = \eps^{-4}\,t$ and defining the function $v$ as
\begin{equation}\label{e:change-variable-rescale}
    v(x,s) := u(x, \eps^4 s),
\end{equation}
one obtains the relation
\begin{equation}
    \de_s v(x,s) = \eps^4 \de_t u(x, \eps^4 s).
\end{equation}
Consequently, the following relationship between \eqref{e:efunctional} and \eqref{e:efunctional-rescaled} holds
\begin{align}\label{e:equivalence_F_J}
    \Fc_\eps(u) &= \iint e^{-s} \Biggl\{ \eps^4 \left[ 1 + \frac{1}{\eps}f_\eps(u(x,\eps^4 s))^2 \right] |\de_t u(x,\eps^4 s)|^2 + |\nabla u (x,\eps^4 s)|^2 + 2F \cdot\nabla u(x,\eps^4 s)\Biggr\} \dif x \dif s \nonumber \\
    &= \frac{1}{\eps^4} \iint e^{-s} \Biggl\{ \frac{1}{\eps^4} \left[ 1 + \frac{1}{\eps}f_\eps(v)^2 \right] |\de_s v|^2 + \eps^4 |\nabla v|^2 +2 \eps^4 F \cdot\nabla v \Biggr\} \dif x \dif s = \frac{1}{\eps^4} \Jc_\eps(v).
\end{align}
Moreover, since we rescale only in time, we get that if $u \in \Uc_{\mathcal D}(D,g)$, then also $u \in \Uc_{\mathcal D}(D,g)$. Thus, solving the minimization problem for \eqref{e:efunctional} is equivalent to solving it for \eqref{e:efunctional-rescaled}. Specifically, $u_\eps$ is a minimizer for \eqref{e:efunctional} in $\Uc_{\mathcal D}(D,g)$, if and only if $v_\eps$, defined in \eqref{e:change-variable-rescale}, is a minimizer for \eqref{e:efunctional-rescaled} in $\Uc_{\mathcal D}(D,g)$. 
The same applies when we consider the Neumann problem in $\Uc_{\mathcal N}(D,g)$.

\subsection{The minimization problem for \texorpdfstring{$\Jc_\eps$}{Je}}
In this section, we show that there are non-trivial minimizers of the functional $\Jc_{\eps}$ in the Dirichlet and Neumann classes $\Uc_{\mathcal D}(D,g)$ and $\Uc_{\mathcal N}(D,g)$. 
As mentioned above, the proof holds also for unbounded domains $D\subset\R^d$.

\begin{proposition}[Existence of minimizer]\label{prop:existence-J-epsilon}
Let $ D$ be an open set in $\R^d$, $g\in H^1(D)$, $h\in L^\infty(D)$ with $|h(x)|\le1$, $u_M : D \to \R$ measurable, and $F \in L^2(D;\R^d)$. 
Then, for all $\eps>0$ there are minimizers of $\Jc_{\eps}$ in the classes $\Uc_{\mathcal D}(D,g)$ and $\Uc_{\mathcal N}(D,g)$. 
Moreover, if $v_\eps$ is a minimizers of $\Jc_{\eps}$ in $\Uc_{\mathcal D}(D,g)$ (or in $\Uc_{\mathcal N}(D,g)$), then the following estimate holds: 
\begin{equation}\label{e:uniform-energy-estimate}
   \left\vert \Jc_\eps(v_\eps) \right\vert  \le 2 \left( \|\nabla g \|_{L^2(D)}^2 + \|F\|_{L^2(D)}^2 \right)\, \eps^4.
\end{equation}
\end{proposition}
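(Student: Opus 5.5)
We prove \Cref{prop:existence-J-epsilon} by the direct method in the calculus of variations, treating the Dirichlet and Neumann classes in parallel. We first establish the energy bound \eqref{e:uniform-energy-estimate}, since it also gives the coercivity needed for existence.

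\emph{Upper bound.} We test with the time-independent competitor $v(x,t):=g(x)$, which lies in both $\Uc_{\mathcal D}(D,g)$ and $\Uc_{\mathcal N}(D,g)$. For this $v$ we have $\de_t v=0$ and $\de_t(p_\eps(v,x))=0$. Using $\int_0^\infty e^{-t}\dif t=1$, we obtain
\[
\Jc_\eps(g)=\eps^4\int_D |\nabla g|^2+2F\cdot\nabla g\dif x\le \eps^4\big(2\|\nabla g\|_{L^2}^2+\|F\|_{L^2}^2\big).
\]
Hence $\inf\Jc_\eps\le 2\eps^4(\|\nabla g\|^2+\|F\|^2)$.

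\emph{Lower bound.} From \eqref{e:eJ-lower-bound} we have $\inf\Jc_\eps\ge-\eps^4\|F\|^2$. Together with the upper bound, this gives \eqref{e:uniform-energy-estimate} for any minimizer.

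\emph{Existence.} Take a minimizing sequence $v_j$ with $\Jc_\eps(v_j)\le C$. Using $|\nabla v|^2+2F\cdot\nabla v\ge \tfrac12|\nabla v|^2-2|F|^2$, we get the weighted bound
\[
\iint e^{-t}\big(|\de_t v_j|^2+\tfrac{\eps^4}{2}|\nabla v_j|^2\big)\le C+2\eps^4\|F\|^2 .
\]
Since $e^{-t}\ge e^{-T}$ on $D_T$, this controls $\|\de_t v_j\|_{L^2(D_T)}$ and $\|\nabla v_j\|_{L^2(D_T)}$ for every $T>0$. For the $L^2$ norm itself, we use the fixed initial trace $v_j(\cdot,0)=g$ and write $v_j(x,t)=g(x)+\int_0^t\de_t v_j$. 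This gives $\|v_j\|_{L^2(D_T)}\le \sqrt T\|g\|_{L^2}+T\|\de_tv_j\|_{L^2(D_T)}$, so $v_j$ is bounded in $H^1(D_T)$ for every $T$. A diagonal extraction then yields $v_j\weak v$ in $\Uc$ and $v_j\to v$ in $L^2_{loc}$ and a.e. Local compactness suffices here: we use Rellich on bounded subsets $(D\cap B_R)\times(0,T)$ and diagonalize in $R$, which is needed when $D$ is unbounded or irregular. The constraint classes are closed under this convergence: $H^1_{0,P}(D_t)$ is a closed subspace, hence weakly closed, and the trace at $t=0$ is weakly continuous.

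\emph{Lower semicontinuity.} The quadratic terms $\iint e^{-t}|\de_tv|^2$ and $\iint e^{-t}\eps^4|\nabla v|^2$ are convex and strongly continuous on the weighted spaces, hence weakly lower semicontinuous. The linear term $\iint e^{-t}\eps^4 F\cdot\nabla v$ passes to the limit, because $e^{-t}F$ belongs to the weighted dual: split the integral into $t<T$ and $t>T$, and bound the tail by Cauchy--Schwarz with $\int_T^\infty e^{-t}\to0$.

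The main obstacle is the nonlinear term $\iint e^{-t}\eps|\de_t(p_\eps(v,x))|^2=\iint e^{-t}\eps^{-1}f_\eps(v)^2|\de_tv|^2$. Note that $p_\eps(v_j,x)$ is bounded by $1$, and its time derivative is bounded in weighted $L^2$, but we must identify the limit. Since $\de_z p_\eps(\cdot,x)$ is bounded by $L_\eps$ (\Cref{lemma:estimates-f-k-epsilon}), $\de_t(p_\eps(v_j,x))=\de_zp_\eps(v_j,x)\de_tv_j$ is bounded in $L^2(D_T)$. Moreover $p_\eps(v_j,x)\to p_\eps(v,x)$ a.e. and in $L^2_{loc}$, by the Lipschitz bound in $z$. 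Therefore $\de_t(p_\eps(v_j,x))\weak\de_t(p_\eps(v,x))$ weakly in $L^2_{loc}$, since the distributional limit is identified. Weak lower semicontinuity of the weighted $L^2$ norm then gives $\liminf\Jc_\eps(v_j)\ge\Jc_\eps(v)$. One subtlety is that $x\mapsto p_\eps(z,x)$ is only measurable, so there is no spatial regularity; this is harmless because only time derivatives of $p_\eps(v,x)$ appear. Hence $v$ is a minimizer, which completes the existence part.
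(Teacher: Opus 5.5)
Your proof is correct and follows the paper's route: the competitor $v\equiv g$ combined with the lower bound \eqref{e:eJ-lower-bound} gives the estimate; coercivity comes from the energy plus a Poincar\'e inequality in time based on the initial trace $g$; compactness follows from a diagonal extraction; and lower semicontinuity reduces to identifying the weak limit of the nonlinear term. The differences are minor. You identify the limit of $\de_t(p_\eps(v_j,x))$ distributionally, which needs only a.e.\ convergence and $|p_\eps|\le 1$, whereas the paper obtains strong convergence of $f_\eps(v_j)$ from the bound on $k_\eps$ and then uses weak--strong pairing. Your pointwise inequality $|\nabla v|^2+2F\cdot\nabla v\ge\frac12|\nabla v|^2-2|F|^2$ gives a cleaner and more careful coercivity bound than the paper's absorption step. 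One correction: for irregular $D$, Rellich need not hold on $(D\cap B_R)\times(0,T)$. You should apply it on subdomains compactly contained in $D\times(0,T)$ instead, which is enough because a.e.\ convergence is all you use.
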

\begin{proof} We carry out the proof in the class $\Uc_{\mathcal D}(D,g)$, the Neumann case $\Uc_{\mathcal N}(D,g)$ being analogous. We proceed in four steps.

\paragraph{Step 1. Uniform energy estimates and well-posedness}
$\mathcal J_\eps$ is bounded by below by \eqref{e:eJ-lower-bound}. On the other hand, the function $w(x,t) \equiv g(x)$ belongs to $\mathcal U_{\mathcal D}(D,g)$, and it holds
\begin{equation}\label{e:uniformbound_0}
\Jc_\eps(w) =\eps^4 \int_0^{+\infty} e^{-t}\int_{D} \big(|\nabla g|^2 + 2F\cdot \nabla g \big)  \le 2 \big(\|\nabla g \|_{L^2( D)}^2 + \| F\|_{L^2(D)}^2 \big)\, \eps^4,
\end{equation}
and hence 
 \begin{equation*}
\left\vert \inf\Big\{\mathcal J_\eps(v)\ :\ v\in \mathcal U_{\mathcal D}(D,g)\Big\} \right\vert \le 2 \big(\|\nabla g \|_{L^2(D)}^2 + \| F\|_{L^2(D)}^2\big)\, \eps^4.
\end{equation*}
This estimate, together with \eqref{e:eJ-lower-bound}, proves \eqref{e:uniform-energy-estimate} for any minimizer $v_\eps$ in $\Uc_{\mathcal D}(D,g)$.\medskip

\paragraph{Step 2. Coercivity} 
We claim that, for all $T>0$ and all $v\in\mathcal U_{\mathcal D}(D,g)$, it holds 
\begin{equation}\label{e:coercivity}
\| v \|_{H^1(D_T)} \le C(T,\eps)\left(\|g\|_{L^2(D)} + \Jc_\eps(v) + \|F\|_{L^2(D)}^2\right);
\end{equation}
in particular, the minimizing sequences are bounded in $H^1(D_T)$.\medskip

For all $T>0$ and $v\in \Uc$ it holds the following energy estimate
\begin{equation}\label{e:energy_estimates_minseq}
    \begin{split}
    \iint_{D_T} |\de_tv |^2 + |\nabla v|^2\dif x \dif t  
        &\le \frac{e^T}{\eps^4}\iint_{D_T} e^{-t} \left\{ \left[1+ \frac{1}{\eps}f_\eps(v)^2\right]|\de_tv|^2 + \eps^4 |\nabla v|^2\right\} \dif x \dif t\\
        &\le \frac{e^T}{\eps^4}\Jc_\eps(v) + 2e^{T}\| F \|_{L^2(D)} \| \nabla v\|_{L^2(D_T)}\\
        &\le \frac{e^T}{\eps^4}\Jc_\eps(v) + e^T\| F \|_{L^2(D)}^2 +  e^T\| \nabla v\|_{L^2(D_T)}^2,
    \end{split}
\end{equation}
and thus we conclude the bound on the energy part $\| \nabla_{x,t}v \|_{L^2(D_T)}$.
The $L^2(D_T)$-estimate of $v$ follows by applying the Poincaré inequality in time. 
Precisely, since for every $\varphi:\R\to\R$, $\varphi\in H^1_{loc}(\R)$ with $\varphi(0)=0$, it holds
\[
\int_0^T\varphi^2(t) \dif t \le T^2\int_0^T|\varphi'(t)|^2 \dif t,
\]
we get that for almost-every fixed $x\in D$ we have 
\[
    \int_0^Tv^2(x,t)\dif t
        \le 2Tg^2(x)+2\int_0^T(v(x,t)-g(x))^2 \dif t
        \le 2Tg^2(x)+2T^2\int_0^T|\de_t v(x,t)|^2 \dif t.
\]
Therefore, integrating in $x\in D$, we obtain
\begin{equation}\label{e:poincare_time}
    \iint_{D_T} |v|^2 \dif x \dif t \le 2T\int_{D} g^2 \dif x+ 2T^2 \iint_{D_T} |\de_t v|^2 \dif x \dif t.
\end{equation}
Finally, \eqref{e:energy_estimates_minseq} and \eqref{e:poincare_time}, imply \eqref{e:coercivity}.
\medskip

\paragraph{Step 3. Compactness}
Thanks to Coercivity, a diagonal argument implies that, for all minimizing sequences that satisfy the following
\begin{equation}\label{e:min_seq1}
\{v_n\}_{n\in\NN}\subset\Uc_{\mathcal D}(D,g), \qquad \Jc_{\eps}(v_n) \xrightarrow[n\to +\infty]{} \inf_{\Uc_{\mathcal D}(D,g)} \Jc_{\eps}, \qquad \text{and}\qquad \sup_{n\in\NN}\Jc_{\eps}(v_n) \le C,
\end{equation}
there exists a subsequence $\{v_{n_j}\}_{j\in\NN}$ and $v\in \Uc_{\mathcal D}(D,g)$ such that
\begin{equation}\label{e:min_seq2}
v_{n_j}\xrightharpoonup[j\to+\infty]{} v \quad \text{in}\quad \Uc,
    \qquad 
    v_{n_j} \xrightarrow[j\to+\infty]{} v \quad \text{in}\quad L^2(D_T\cap C_R) \quad \text{for all}\quad R>0,
\end{equation}
where $C_R$ is defined in \Cref{sub:notation}, and
\begin{equation}\label{e:min_seq3}
v_{n_j}(x,t) \xrightarrow[j\to+\infty]{} v(x,t) \quad \text{for almost-every}\quad (x,t)\in D\times(0,+\infty).
\end{equation}
To prove that $v$ is a minimizer of $\Jc_\eps$, it is suffices to prove that the functional is lower-semicontinuous with respect to minimizing sequences.

\paragraph{Step 4. Lower semicontinuity} 
We claim that, for all minimizing sequences $\{v_n\}_{n\in\NN}$ satisfying \eqref{e:min_seq1}, \eqref{e:min_seq2}, and \eqref{e:min_seq3}, the following inequality holds:
\begin{equation}\label{e:semicontinuity_f}
   \Jc_\eps(v) \le \liminf_{n\to+\infty} \Jc_\eps(v_n).
\end{equation}

Since the $L^2$-norm of the gradient is lower-semicontinuous with respect to $H^1$-weak convergence,  we only need to check that for all $T>0$, the following quantity
\[
    \iint_{D_T} e^{-t}f_\eps(v)^2|\de_tv|^2\dif x \dif t.
\]
is lower-semicontinuous.
We first notice that from \eqref{e:min_seq1} it follows
\[
    \sup_{n\in\NN} \iint_{D_T} e^{-t} f_\eps(v_n)^2 |\de_t v_n|^2 \dif x \dif t 
        \le 
    \frac{1}{\eps^3} \sup_{n\in\NN} \Jc_\eps(v_n) \le \frac{C}{\eps^3}\,,
\]
and therefore the sequence $\left\{e^{-\sfrac t2}f_\eps(v_n) \de_t v_n\right\}_{n\in\NN}$ is uniformly bounded in $L^2( D_T)$, for every $T>0$.
Thus, there exists a subsequence $\{v_{n_j}\}_{j\in\NN}$ and a measurable function $w$ such that
\[
    e^{-\sfrac t2} f_\eps(v_{n_j})v_{n_j}\xrightharpoonup[j\to+\infty]{} w \qquad \text{weakly in}\quad L^2(D_T)\quad\text{for all}\quad T>0,
\]
and such that $w$ satisfies the following inequality
\[
    \iint_{D_T} |w|^2 \dif x \dif t 
    \le \liminf_{j\to+\infty} \iint_{D_T}e^{-t} f_\eps(v)^2 |\de_t v|^2 \dif x \dif t.
\]
\noindent
We now show that
$w = e^{-\sfrac t2}f_\eps(v)\,\de_tv.$\medskip

This follows since, for all $(x,t)\in D\times(0,+\infty)$, $\eps>0$, and $n\in\NN$, it holds
\[
    |f_\eps(v_n(x,t),x) - f_\eps(v(x,t),x)| = \left|\frac{1}{\eps^2}k_\eps\big(\phi_n(x,t),x\big)\right|\cdot|v_n(x,t)- v(x,t)| ,
\]
where $\phi_n(x,t)\in [v_n(x,t),v(x,t)]$. 
Thanks to \Cref{lemma:estimates-f-k-epsilon}, the function $k_\eps$ is bounded and therefore
\[
    |f_\eps(v_n)- f_\eps(v)| \le \frac{C}{\eps^2}|v_n - v|.
\]
Now, since for all $T,R>0$ we have
\[
v_{n} \xrightarrow[n\to+\infty]{} v \qquad \text{strongly in}\quad L^2(D_T \cap C_R), 
\]
the previous estimate implies that
\[
f_\eps(v_n) \xrightarrow[n\to+\infty]{} f_\eps(v) \quad \text{strongly in}\quad L^2(D_T\cap C_R). 
\]
Finally, by the {Weak-Strong} Convergence Criterion we conclude that 
\[
e^{-\sfrac t2}f_\eps(v_n)\de_t v_n 
    \xrightharpoonup[n\to+\infty]{} 
    e^{-\sfrac t2} f_\eps(v) \de_t v\quad\text{in}\quad L^2(D_T \cap C_R), 
\]
for all $R,T>0$, and since the weak limit is unique, this concludes the proof.
\end{proof}

\section{Energy estimates for the minimizers' sequence}\label{sec:energy-estimates}

For every minimizer $v_\eps\in\mathcal U_{\mathcal D}(D,g)$ of \eqref{e:efunctional-rescaled}, we define the following energies
\begin{align}
    I_\eps(t)& = \int_{D(t)}\left(1+\frac1\eps f_\eps(v_\eps,x)^2\right)|\de_tv_\eps|^2 \dif x,\label{eq:energy_time}\\ 
    L_\eps(t)& = \eps^4\int_{D(t)}|\nabla v_\eps|^2 + 2F \cdot\nabla v_\eps \dif x,\label{eq:energy_space}
\end{align}
which, thanks to \eqref{e:uniform-energy-estimate}, are well-defined for almost every time $t\ge0$.
We also define the tail energy
\begin{equation}\label{eq:energy_at_infinity}
    E_\eps(t) = e^{t}\int_t^{+\infty} e^{-\tau}\Bigl[I_\eps(\tau) + L_\eps(\tau)\Bigr]\dif \tau,
\end{equation}
which measures the weighted energy remainder as $t \to +\infty$. In particular, for all $t \ge 0$, we have 
\[
    E_\eps(t) = \Jc_\eps(v_\eps(\cdot, t+\cdot)),
\] and at the initial time $t=0$, it is precisely $E_\eps(0) = \Jc_\eps(v_\eps)$.

\begin{proposition}[Inner variation and monotonicity formula]\label{prop:Inner_monotonicity}
Let $D$ be an open set in $\R^d$, $g\in H^1(D)$, $h\in L^\infty(D)$ with $|h(x)|\le 1$, $u_M : D \to \R$ measurable, and $F \in L^2(D;\R^d)$.
For all $\eps>0$, if $v_\eps$ is a minimizer of the functional \eqref{e:efunctional-rescaled} in either $\Uc_{\mathcal D}(D,g)$ or  $\Uc_{\mathcal N}(D,g)$ and  $I_\eps$, $L_\eps$ ,and $E_\eps$ are respectively defined as in \eqref{eq:energy_time}, \eqref{eq:energy_space} and \eqref{eq:energy_at_infinity}, then the following properties hold:
\begin{enumerate}[{\itshape (i)}]
\item Inner variation identity. For almost-every $t>0$, we have 
\begin{equation}\label{e:inner-variation-identity}
L_\eps(t)-E_\eps(t)= I_\eps(t);
\end{equation}
\item\label{item:monotonicity-formula-for-epsilon} Energy decaying. The energy $E_\eps:[0,+\infty)\to [0,+\infty)$ is non-increasing in time and 
 \begin{equation}\label{e:monotonicity-formula-for-epsilon}
E_\eps'(t)=- 2\,I_\eps(t)\quad\text{for almost-every}\quad t> 0.
\end{equation}
\item\label{item:uniform-energy-bound-for-every-t} Uniform energy bound. For every $t\ge 0$, we have the bound
\begin{equation}\label{e:uniform-energy-bound-for-every-t}
- \eps^4 \| F \|_{L^2(D)}^2\le
E_\eps(t)\le 2 \left( \|\nabla g \|_{L^2(D)}^2 + \|F\|_{L^2(D)}^2 \right)\, \eps^4.
\end{equation}

\end{enumerate}
\end{proposition}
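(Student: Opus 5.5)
The natural route is the Serra--Tilli inner-variation argument: compare $v_\eps$ with its reparametrizations in time. Fix a test function $\phi\in C^\infty_c((0,+\infty))$ and, for small $\lambda$, set $v^\lambda(x,t):=v_\eps(x,t+\lambda\phi(t))$. The map $t\mapsto t+\lambda\phi(t)$ is a diffeomorphism of $[0,+\infty)$ fixing a neighbourhood of $t=0$, so $v^\lambda$ remains in $\Uc_{\mathcal D}(D,g)$ (resp. $\Uc_{\mathcal N}(D,g)$). Minimality then gives $\frac{d}{d\lambda}\Jc_\eps(v^\lambda)\big|_{\lambda=0}=0$.

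The key observation is that the time term is a genuine $1$-homogeneous-in-$\de_t$ quadratic. Since $\de_t(p_\eps(v,x))=\eps^{-1}f_\eps(v,x)\de_t v$, the integrand of $\Jc_\eps$ is $e^{-t}\{a(v,x)|\de_tv|^2+\eps^4|\nabla v|^2+2\eps^4F\cdot\nabla v\}$ with $a=1+\eps^{-1}f_\eps^2$. We change variables $s=t+\lambda\phi(t)$, so that $\de_t v^\lambda=(1+\lambda\phi')\de_sv_\eps$ and $dt=ds/(1+\lambda\phi')$. This gives
\[
\Jc_\eps(v^\lambda)=\int_0^\infty e^{-t(s)}\Big[(1+\lambda\phi'(t(s)))\,I_\eps(s)+\frac{L_\eps(s)}{1+\lambda\phi'(t(s))}\Big]ds .
\]
Differentiating at $\lambda=0$, where $t(s)=s-\lambda\phi(s)+O(\lambda^2)$, we obtain
\[
\int_0^\infty e^{-s}\big[\phi(I_\eps+L_\eps)+\phi'(I_\eps-L_\eps)\big]\,ds=0\qquad\text{for all }\phi .
\]
Differentiation under the integral is justified because all terms are integrable by \eqref{e:uniform-energy-estimate}.

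This identity says that, in the distributional sense on $(0,\infty)$, $\big(e^{-s}(I_\eps-L_\eps)\big)'=e^{-s}(I_\eps+L_\eps)$. Integrating from $t$ to $+\infty$ gives $e^{-t}(L_\eps-I_\eps)(t)=\int_t^\infty e^{-s}(I_\eps+L_\eps)\,ds$ for a.e. $t$, which is exactly (i). Some care is needed at infinity: we need $e^{-s}(I_\eps-L_\eps)\to0$ along a sequence, which follows from the integrability of $e^{-s}(|I_\eps|+|L_\eps|)$. This same fact also shows that $e^{-s}(I_\eps-L_\eps)$ agrees a.e. with an absolutely continuous function.

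For (ii), differentiate the definition of $E_\eps$: $E_\eps'=E_\eps-(I_\eps+L_\eps)$ a.e. Substituting $L_\eps=E_\eps+I_\eps$ from (i) gives $E_\eps'=-2I_\eps\le0$, and $E_\eps$ is locally absolutely continuous by construction. For (iii), the upper bound comes from monotonicity together with $E_\eps(0)=\Jc_\eps(v_\eps)$ and \eqref{e:uniform-energy-estimate}, once continuity of $E_\eps$ at $0$ is checked (dominated convergence). The lower bound holds because $E_\eps(t)=\Jc_\eps(v_\eps(\cdot,t+\cdot))$ and the pointwise inequality $|\nabla v|^2+2F\cdot\nabla v\ge-|F|^2$, as in \eqref{e:eJ-lower-bound}. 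Notably, positivity of $E_\eps$ is not needed; we only need this two-sided bound.

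The main obstacle is the rigorous justification of the inner-variation derivative. The concerns are the composition $p_\eps(v^\lambda,x)$, which is fine since $p_\eps$ is smooth in $z$ and the chain rule holds in $H^1$; the admissibility of $v^\lambda$ near $t=0$ (hence the choice $\phi$ compactly supported in $(0,\infty)$); and the passage of the $\lambda$-derivative inside an integral over an unbounded time interval with weight $e^{-t}$. The last point is handled by the explicit formula above, whose $\lambda$-dependence enters only through smooth bounded factors.
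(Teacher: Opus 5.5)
Your proposal is correct and follows the paper's proof. Both use the same time-reparametrization inner variation, which gives $\int_0^{\infty} e^{-\tau}\big[(I_\eps-L_\eps)\phi'+(I_\eps+L_\eps)\phi\big]\dif\tau=0$, and both deduce (ii) and (iii) from it exactly as you describe. The only difference is how (i) is extracted: the paper tests with the step-like function $\phi_{t,\lambda}$ (zero before $t$, equal to $e^{t}$ after $t+\lambda$) and lets $\lambda\to0$. You instead read the identity as the distributional equation $\big(e^{-s}(I_\eps-L_\eps)\big)'=e^{-s}(I_\eps+L_\eps)$ and integrate to infinity, which is equivalent and slightly cleaner, since it only needs compactly supported variations.
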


\begin{proof} 
For simplicity, we drop the index $\eps$ in $L_\eps$, $E_\eps$, $I_\eps$, $v_\eps$.

We first prove the energy bounds in \ref{item:uniform-energy-bound-for-every-t}. Since $E(0)=\Jc_\eps(v_\eps)$, the upper bound in \eqref{e:uniform-energy-bound-for-every-t} follows from the monotonicity property \ref{item:monotonicity-formula-for-epsilon} and the estimate \eqref{e:uniform-energy-estimate}.
Regarding the lower one, as in \Cref{prop:existence-J-epsilon}, we observe that
\begin{align*}
    E(t) &= \int_{t}^{+\infty}e^{-(\tau-t)} \big[I(\tau) + L(\tau)\big] \dif \tau\\
    &\ge \eps^4\int_{t}^{+\infty}e^{-(\tau-t)} \| \nabla v(\tau)\|_{L^2(D)}(\|\nabla v(\tau)\|_{L^2(D)} - 2\| F\|_{L^2(D)}) \dif \tau.
\end{align*}
Since for all $\tau\ge0$ it holds that
\[
    \| \nabla v(\tau)\|_{L^2(D)}(\|\nabla v(\tau)\|_{L^2(D)} - 2\| F\|_{L^2(D)}) \ge -\|F\|_{L^2(D)}^2,
\]
we get the estimate from below in \eqref{e:uniform-energy-bound-for-every-t}.

To derive the inner variation identity \eqref{e:inner-variation-identity}, we consider the time-reparametrization $\Phi_s(t) := t + s\phi(t)$, where $\phi \in C^\infty((0, +\infty))$.
Let 
\[
    v_s(x,t) := v(x, \Phi_s(t)),
\]
and note that if $v \in \Uc_{\mathcal D}(D,g)$ or $v \in \Uc_{\mathcal N}(D,g)$, the same holds for $v_s$ since $\Phi_s(0)=0$ and $v$ is time-independent on $\de_L D$.
By the chain-rule it follows that 
\[
    \partial_tv_s(x,t)=\big(1+s\phi'(t)\big)\,\partial_tv(x,\Phi_s(t)),
\]
and so the energy $\Jc_\eps(v_s)$ reads as
\[
    \Jc_\eps(v_s) 
        = \iint e^{-t} \biggl\{ 
            \left[1 + \frac{1}{\eps} f_\eps(v(x,\Phi_s(t))^2\right]|\de_t v(x,\Phi_s(t))|^2(1 + s\phi'(t) )^2 
            + \eps^4 |\nabla v (x,\Phi_s(t)|^2  + 2\eps^4 F \cdot\nabla v
        \biggr\} \dif x \dif t.
\]
We set $\Psi_s=\Phi^{-1}_s:\mathbb R\to\mathbb R$ to be the inverse of $\Phi_s$ and we consider the change of variables 
$$\tau = \Phi_s(t),\qquad\text{and}\qquad  t = \Psi_s(\tau) = \Phi_s^{-1}(\tau).$$ 
In particular $\dif t = \Psi_s'(\tau)\dif\tau$ where $\Psi_s(\tau) = \tau - s\phi(\tau) + o(s)$. Therefore
\begin{equation*}
\begin{split}
    \Jc_{\eps}(v_s) &= 
    \iint e^{-{\Psi_s(\tau)}} \Biggl\{ \biggl[1 + \frac{1}{\eps}f_\eps(v)^2 \biggr]|\de_t v |^2 \Bigl(1+s\phi'(\Psi_s(\tau))\Bigr)^2 
    + \eps^4\left(|\nabla v|^2 + 2F\cdot \nabla v\right) \Biggr\} \Psi_s'(\tau) \dif \tau\\
    &=\iint e^{-\tau}(1+s\phi ) \biggl\{ \biggl[1 + \frac{1}{\eps} f_\eps(v)^2\biggr]|\de_t v |^2 (1 + 2s\phi' )
    + \eps^4\left(|\nabla v|^2 + 2F\cdot \nabla v\right) \biggr\}(1-s\phi')\dif\tau + o(s)\\
    &=\iint e^{-\tau}\Biggl\{ \biggl[1 +\frac{1}{\eps}f_\eps(v)^2\biggr]|\de_t v|^2 + \eps^4\left(|\nabla v|^2 + 2F\cdot \nabla v\right) \Biggr\} \dif\tau \\
        & \hspace{2cm}+s\iint e^{-\tau}\biggl[1 +\frac{1}{\eps}f_\eps(v)^2\biggr]|\de_t v|^2 \phi'(\tau) \dif\tau \\
        &\hspace{3cm}- s\iint e^{-\tau}\eps^4\left(|\nabla v|^2 + 2F\cdot \nabla v\right)\phi'(\tau)\dif \tau\\
        &\hspace{4cm}+s\iint e^{-\tau}\phi \Biggl\{\biggl[1 +\frac{1}{\eps}f_\eps(v)^2\biggr]|\de_t v|^2 + \eps^4\left(|\nabla v|^2 + 2F\cdot \nabla v\right)\Biggr\}\dif \tau + o(s)\\
    &=\Jc_\eps(v) +s \Biggl\{\int_0^{+\infty} e^{-\tau}\bigl[I(\tau) - L(\tau)\bigr]\phi'(\tau) 
    + \int_0^{+\infty} e^{-\tau}\bigl[I(\tau) + L(\tau)\bigr]\phi(\tau)\Biggr\} + o(s).
\end{split}
\end{equation*}
By the minimality of $v$, we get
\begin{equation}\label{e:fisrt-variation-J-eps}
 \begin{split}  0=\frac{\dif}{\dif s}\Biggr|_{s=0} \Jc_\eps(v_s)& = \int_0^{+\infty} e^{-\tau}\bigl[I(\tau) - L(\tau)\bigr]\phi'(\tau)\dif\tau
 + \int_0^{+\infty} e^{-\tau}\bigl[I(\tau) + L(\tau)\bigr]\phi(\tau)\dif\tau.
 \end{split}
\end{equation}

We now choose a specific test function $\phi$. Precisely, for any $t>0$ and (small) $\lambda>0$, we define the non-decreasing continuous function $\phi_{t,\lambda}$ as follows:
\begin{equation}
    \phi_{t,\lambda}(\tau) := \begin{cases}
        0 & \mbox{if } \tau \le t, \\
        e^{t} &\mbox{if }\tau \ge t+ \lambda,\\
    \end{cases}
\end{equation}
and 
\[
    \phi_{t,\lambda}'(\tau) \le \frac{2 e^{t}}{\lambda} \qquad\text{for all}\quad\tau\in[t,t+\lambda].
\]
Testing \eqref{e:fisrt-variation-J-eps} with $\phi_{t,\lambda}$, for almost every $t>0$ we can take the limit as $\lambda\to0^+$ we find the identity
\begin{equation}
    0 = I(t)  - L(t) + E(t),
\end{equation}
which is precisely \eqref{e:inner-variation-identity}.

{We next prove the time-monotonicity of the energy $E(t)$.} Since $\Jc_\eps(u)<+\infty$, $E\in W^{1,1}_{loc}(\RR_+)$, and therefore the weak derivative $E'(t)$ is well defined for almost every $t\ge0$ and it reads
\begin{equation}\label{e:derivative-of-E}
    E'(t) 
        = E(t) -  
                \Bigl[I(t) + L(t)  \Bigr].
\end{equation}
Now, by the energy identity \eqref{e:inner-variation-identity}, we get 
\begin{equation}\label{e:derivative-of-E-parte-2}
    E'(t) = E(t) - L(t) - I(t) = - 2I(t),
\end{equation}
which is precisely \eqref{e:monotonicity-formula-for-epsilon} and gives  $E'(t) \le 0$. 
\end{proof}

\subsection{Energy estimates for \texorpdfstring{$v_\eps$}{rescaled minimizers}}
The energy decay from \Cref{prop:Inner_monotonicity} directly implies the following energy estimates for the minimizers of \eqref{e:efunctional-rescaled}.

\begin{lemma}\label{lemma:bound_grandient-rescaled}
Let $D$ be an open set in $\R^d$, $g\in H^1(D)$, $h\in L^\infty(D)$ with $|h(x)|\le 1$, $u_M : D \to \R$ measurable, and $F \in L^2(D;\R^d)$.
Let $\eps>0$ and let $v_\eps$ be a minimizer of the functional \eqref{e:efunctional-rescaled} in $\Uc_{\mathcal D}(D,g)$ (or in $\Uc_{\mathcal N}(D,g)$.
Then it holds the following estimate
\begin{equation}\label{e:bound_time}
    \iint\left(1+\frac{1}{\eps}f_\eps(v_\eps)^2\right) |\de_tv_\eps |^2\dif x\dif t
        \le 2\eps^4\Big(\|\nabla g\|_{L^2(D)}^2+ \| F \|_{L^2(D)}^2\Big).
\end{equation}
Moreover, for every $\tau\ge0$ and $T>0$ it holds
\begin{equation}\label{e:bound_space}
    \int_{\tau}^{\tau+T} \int_{D}|\nabla v_\eps|^2(x,t) \dif x \dif t  \le 8(T+1)\Big(\|\nabla g\|_{L^2(D)}^2+ \| F \|_{L^2(D)}^2\Big).
\end{equation}
\end{lemma}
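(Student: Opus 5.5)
The plan is to integrate the energy decay identity from \Cref{prop:Inner_monotonicity} in time and then use the inner variation identity to convert the resulting bounds on $E_\eps$ into bounds on $L_\eps$, and hence on $\nabla v_\eps$. Throughout, $v_\eps$ is fixed and the index $\eps$ is dropped from $I,L,E$.

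\emph{Step 1: proof of \eqref{e:bound_time}.} By \eqref{e:monotonicity-formula-for-epsilon} we have $E'(t)=-2I(t)$ for a.e.\ $t$, and $E\in W^{1,1}_{loc}$. Integrating on $[0,T]$ gives
\[
2\int_0^T I(t)\dif t = E(0)-E(T).
\]
We insert the upper bound $E(0)=\Jc_\eps(v_\eps)\le 2(\|\nabla g\|^2+\|F\|^2)\eps^4$ from \eqref{e:uniform-energy-estimate} and the lower bound $E(T)\ge -\eps^4\|F\|^2$ from \eqref{e:uniform-energy-bound-for-every-t}. This yields $\int_0^T I\le (\|\nabla g\|^2+\tfrac32\|F\|^2)\eps^4$, and letting $T\to\infty$ gives \eqref{e:bound_time}, since $\tfrac32\le 2$.

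\emph{Step 2: proof of \eqref{e:bound_space}.} By the inner variation identity \eqref{e:inner-variation-identity}, $L(t)=E(t)+I(t)$ for a.e.\ $t$. Integrating over $[\tau,\tau+T]$ and using the upper bound on $E$ together with Step 1 gives
\[
\int_\tau^{\tau+T}L \le 2T(\|\nabla g\|^2+\|F\|^2)\eps^4+(\|\nabla g\|^2+\|F\|^2)\eps^4 .
\]
On the other hand, $L(t)=\eps^4\int_D(|\nabla v|^2+2F\cdot\nabla v)\ge \tfrac{\eps^4}{2}\int_D|\nabla v|^2-\eps^4\|F\|^2$, using $2F\cdot\nabla v\ge -\tfrac12|\nabla v|^2-2|F|^2$. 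Since the previous display bounds $\int_\tau^{\tau+T}L$ by $(2T+1)(\|\nabla g\|^2+\|F\|^2)\eps^4$, dividing by $\eps^4/2$ gives
\[
\int_\tau^{\tau+T}\!\!\int_D|\nabla v|^2\le 2(2T+1)(\|\nabla g\|^2+\|F\|^2)+4T\|F\|^2 \le 8(T+1)\big(\|\nabla g\|^2+\|F\|^2\big).
\]

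\emph{Main obstacle.} The only delicate point is ensuring that the pointwise-in-$t$ identities (which hold only for a.e.\ $t$) can be integrated. This is justified because $E\in W^{1,1}_{loc}$ and $I,L\in L^1_{loc}$, the latter following from $\Jc_\eps(v_\eps)<\infty$ and the lower bound on $L$ established in Step 2. The constants then need only the routine bookkeeping above.
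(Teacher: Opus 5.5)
Your proof is correct and follows the paper's argument: integrate $E'=-2I$ using the upper bound on $E(0)$ and the lower bound on $E(T)$, then write $L=E+I$ and combine it with $\|\nabla v\|_{L^2(D)}^2\le 4\|F\|_{L^2(D)}^2+2\eps^{-4}L$. There are two harmless constant slips, both absorbed by the slack in $8(T+1)$. First, your Young inequality actually gives $L(t)\ge \frac{\eps^4}{2}\int_D|\nabla v|^2-2\eps^4\|F\|^2$, which is the bound your final term $4T\|F\|^2$ uses. Second, Step 1 yields $\int I\le(\|\nabla g\|^2+\tfrac32\|F\|^2)\eps^4$ rather than coefficient $1$ on $\|F\|^2$.
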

\begin{proof}
We firstly show the estimate \eqref{e:bound_time}. By definition, for almost-every $t>0$, we have 
\[
    I_\eps(t) =
   \int_{D(t)}\left(1+\frac{1}{\eps}f_\eps(v_\eps)^2\right) |\de_tv_\eps |^2\dif x.
\]
If we integrate \eqref{e:monotonicity-formula-for-epsilon} in $[0,T]$, we get
\begin{align*}
    \iint_{D_T} \left(1+\frac{1}{\eps}f_\eps(v_\eps)^2\right) |\de_tv_\eps |^2\dif x\dif t 
    &= \int_{0}^T I_\eps(t)\dif t = -\frac{1}{2}\int_{0}^TE_\eps'(t)\dif t \\
    &= \frac{E_\eps(0)-E_\eps(T)}{2} 
    \le \frac32\eps^4\Big(\|\nabla g\|_{L^2(D)}^2 + \| F \|_{L^2(D)}^2\Big),
\end{align*}
where we used the upper bound for $E_\eps(0)$ from \eqref{e:uniform-energy-bound-for-every-t} and the lower bound $E_\eps(T)\ge -\eps^4 \|F\|_{L^2(D)}^2$. We then conclude by taking the limit for $T\to+\infty$.\medskip

We next prove \eqref{e:bound_space}. Thanks to \eqref{eq:energy_space},  
for almost every $t\ge0$ it holds
\begin{equation*}
    \| \nabla v_\eps(t)\|_{L^2(D)}^2 \le 4 \| F\|^2_{L^2(D)} + 2\eps^{-4} L_\eps(t),
\end{equation*}
and therefore by \eqref{e:inner-variation-identity}, the monotonicity of the energy \eqref{e:monotonicity-formula-for-epsilon}, \eqref{e:uniform-energy-bound-for-every-t}, and \eqref{e:bound_time} it follows that
\begin{align*}
   \int_{\tau}^{\tau+T} \int_{D}|\nabla v_\eps|^2 \dif x\dif t &\le 4T \|F\|_{L^2(D)}^2 
        + 2\eps^{-4}\int_\tau^{\tau+T}{L_\eps(t)} \dif t\\
   &= 4T \|F\|_{L^2(D)}^2 + 2\eps^{-4}\int_\tau^{\tau+T}  E_\eps(t) + I_\eps(t) \dif t \\
   &\le 4 T \|F\|_{L^2(D)}^2 + 2\eps^{-4}\left(T\, E_\eps(\tau) + \int_\tau^{\tau+T} I_\eps(t) \dif t\right)  \\
   &= 4 T \|F\|_{L^2(D)}^2 + 2\eps^{-4}\left(T\, E_\eps(\tau) + \frac12\big(E_\eps(\tau)-E_\eps(\tau+T)\big) \right)  \\
   &\le 8 (T+1) \big(\|\nabla g\|_{L^2(D)}^2 + \| F \|_{L^2(D)}^2 \big).
\end{align*}
which concludes the proof.
\end{proof}

\subsection{The energy estimates for \texorpdfstring{$u_\eps$}{original mininizers}}

The energy estimates provided by \Cref{lemma:bound_grandient-rescaled} directly imply similar estimates for minimizers $u_\eps$ of the original functional \eqref{e:efunctional}.
Indeed, for all minimizers $v_\eps$ of \eqref{e:efunctional-rescaled}, by \eqref{e:change-variable-rescale} and \eqref{e:equivalence_F_J}, $u_\eps\in \Uc_{\mathcal D}(D,g)$ defined as
\[
u_\eps(x,t):= v_\eps(x,\eps^{-4}t),
\]
is a minimizer for \eqref{e:efunctional}. 

\begin{lemma}[Uniform energy bounds]\label{lemma:energy-bound}
Let $D$ be an open set in $\R^d$, $g\in H^1(D)$, $h\in L^\infty(D)$ with $|h(x)|\le 1$, $u_M : D \to \R$ measurable, and $F \in L^2(D;\R^d)$.
For all $\eps>0$, if $u_\eps$ is minimizer of the functional \eqref{e:efunctional} in either $\Uc_{\mathcal D}(D,g)$ or $\Uc_{\mathcal N}(D,g)$, then
    they hold the following estimates:
    \begin{enumerate}[{\itshape(i)}]
        \item \textbf{Global integral time-derivative bound.}
    \begin{equation}\label{e:bound-time-energy}
        \frac12\iint \left(1 + \frac1\eps f_\eps(u_\eps)^2\right) |\de_t u_\eps|^2 \dif x \dif t 
        \le \|\nabla g\|_{L^2(D)}^2 + \|F\|_{L^2(D)}^2.
    \end{equation}

        \item \textbf{Integral bound.} For all $T>0$ it holds
        \begin{equation}\label{e:L2-bound}
            \| u_\eps \|_{L^2(D_T)}^2 \le 2T \| g\|_{L^2(D)}^2 + 2T^2 \left( \|\nabla g\|_{L^2(D)}^2 + \| F \|_{L^2(D)}^2\right).
        \end{equation}
    
        \item \textbf{Almost uniform energy bound.} For all $\tau\ge0$ and $T>0$ there hold
        \begin{equation}\label{e:energy-bound}
            \int_{\tau}^{\tau + T} \int_{D(t)} |\nabla u_\eps|^2 \le 8 \left( T + \eps^4 \right)\left(\|\nabla g\|_{L^2(D)}^2 + \|F\|_{L^2(D)}^2\right).
        \end{equation}
    \end{enumerate}
\end{lemma}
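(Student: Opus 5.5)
The plan is to transfer \Cref{lemma:bound_grandient-rescaled} through the change of variables $u_\eps(x,t)=v_\eps(x,\eps^{-4}t)$, where $v_\eps$ is the minimizer of \eqref{e:efunctional-rescaled} corresponding to $u_\eps$ by \eqref{e:equivalence_F_J}. We have $\de_t u_\eps(x,t)=\eps^{-4}\de_s v_\eps(x,\eps^{-4}t)$, $\nabla u_\eps(x,t)=\nabla v_\eps(x,\eps^{-4}t)$, and $f_\eps(u_\eps(x,t))=f_\eps(v_\eps(x,\eps^{-4}t))$. Each of the three estimates then follows by a one-line substitution $s=\eps^{-4}t$, $\dif t=\eps^4\dif s$.

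For \eqref{e:bound-time-energy} we compute
\[
\iint \Big(1+\tfrac1\eps f_\eps(u_\eps)^2\Big)|\de_tu_\eps|^2\dif x\dif t=\eps^{-8}\,\eps^{4}\iint\Big(1+\tfrac1\eps f_\eps(v_\eps)^2\Big)|\de_sv_\eps|^2\dif x\dif s .
\]
By \eqref{e:bound_time}, the right-hand side is bounded by $\eps^{-4}\cdot 2\eps^4(\|\nabla g\|^2+\|F\|^2)$, which gives (i).

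For \eqref{e:L2-bound} we use the time Poincaré inequality \eqref{e:poincare_time} with $u_\eps(\cdot,0)=g$; in the Dirichlet class the trace at $t=0$ is also $g$, by the definition of $H^1_{0,P}$. This gives $\|u_\eps\|^2_{L^2(D_T)}\le 2T\|g\|^2+2T^2\|\de_tu_\eps\|^2_{L^2(D_T)}$, and (i) bounds the last norm by $2(\|\nabla g\|^2+\|F\|^2)$. This yields a constant $4T^2$ rather than $2T^2$. To reach the stated constant I would instead use the sharper bound $\frac32\eps^4(\cdots)$ obtained inside the proof of \Cref{lemma:bound_grandient-rescaled}, which controls $\iint|\de_t u_\eps|^2$ by $\frac32(\cdots)$. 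Alternatively, the Poincaré constant can be sharpened to $T^2/2$ via $\varphi(t)^2\le t\int_0^t|\varphi'|^2$. Either adjustment suffices; this constant bookkeeping is the only place needing care.

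For \eqref{e:energy-bound} we substitute again:
\[
\int_\tau^{\tau+T}\!\!\int_D|\nabla u_\eps|^2\dif x\dif t=\eps^4\int_{\eps^{-4}\tau}^{\eps^{-4}\tau+\eps^{-4}T}\!\!\int_D|\nabla v_\eps|^2\dif x\dif s .
\]
Applying \eqref{e:bound_space} with initial time $\eps^{-4}\tau$ and length $\eps^{-4}T$ bounds this by $\eps^4\cdot 8(\eps^{-4}T+1)(\cdots)=8(T+\eps^4)(\cdots)$, which is exactly (iii). No real obstacle arises beyond tracking the powers of $\eps$ and the constant in (ii).
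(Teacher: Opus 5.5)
Your proposal is correct and is essentially the paper's proof. Items (i) and (iii) are the same change of variables $u_\eps(x,t)=v_\eps(x,\eps^{-4}t)$ applied to \eqref{e:bound_time} and \eqref{e:bound_space}. Item (ii) is the time-Poincaré argument of \Cref{prop:existence-J-epsilon}, and you rightly notice a constant issue there: the paper appeals to that inequality (constant $T^2$) together with $\|\de_t u_\eps\|_{L^2(D_\infty)}^2\le 2(\|\nabla g\|_{L^2(D)}^2+\|F\|_{L^2(D)}^2)$, which only yields $4T^2$.

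One correction: your claim that ``either adjustment suffices'' is wrong for the first option. Only the sharpened inequality $\int_0^T\varphi^2\dif t\le\frac{T^2}{2}\int_0^T|\varphi'|^2\dif t$ gives the stated $2T^2$. The $\frac32$ bound combined with the constant $T^2$ still produces $3T^2$.
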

\begin{proof}
    By the identity \eqref{e:equivalence_F_J}, if $u_\eps$ is a minimizer for \eqref{e:efunctional} in $\Uc(D,g)$, then $v_\eps(x,t) = u_\eps(x,\eps^4t)$ is a minimizer for \eqref{e:efunctional-rescaled}.
    Thus, relying on the results of \Cref{lemma:bound_grandient-rescaled} we get that
    \begin{equation*}
       \frac12 \iint \left(1 + \frac1\eps f_\eps(u_\eps)^2\right) |\de_t u_\eps|^2 \dif x \dif t 
        = \frac{\eps^{-4}}{2} \iint \left(1 + \frac1\eps f_\eps(v_\eps)^2\right) |\de_t v_\eps|^2 \dif x \dif t
        \le \|\nabla g\|_{L^2(D)}^2 + \| F \|_{L^2(D)}^2,
    \end{equation*}
    that is precisely \eqref{e:bound-time-energy}. In particular it implies the following uniform estimate on the time derivative 
    \[
       \frac12 \|\de_t u_\eps \|_{L^2(D_\infty)}^2\le \|\nabla g\|_{L^2(D)}^2 + \| F \|_{L^2(D)}^2.
    \]
    Through a time-Poincaré inequality, as in \Cref{prop:existence-J-epsilon} the previous estimate implies $L^2(D_T)$-estimates:
    \[
        \| u_\eps \|_{L^2(D_T)}^2 \le 2T \| g\|_{L^2(D)}^2 + 2T^2 \left( \|\nabla g\|_{L^2(D)}^2 + \| F \|_{L^2(D)}^2\right) \qquad\text{for all}\quad T>0.
    \]
    Finally, the estimate \eqref{e:energy-bound} comes from \eqref{e:bound_space}. Indeed, for all $\tau \ge0$ and $T>0$, it holds
    \begin{align*}
        \int_{\tau}^{\tau+T}\int_{D} |\nabla u_\eps(x,t)|^2 \dif x \dif t 
        &= \int_{\tau}^{\tau+T}\int_D \left\vert\nabla v_\eps\left(x,\eps^{-4}t \right)\right\vert^2 \dif x \dif t\\
        &= \eps^{4}\int_{\eps^{-4}\tau}^{\eps^{-4}(\tau+T)}\int_D  |\nabla v_\eps|^2 \dif x \dif t \\
        &\le 8\,\eps^{4}(\eps^{-4}T + 1)\left( \| \nabla g\|_{L^2(D)}^2 + \|F\|_{L^2(D)}^2 \right),
    \end{align*}
   which concludes the proof. 
\end{proof}

\section{Convergence to the Stefan Problem}\label{sec:convergence_Stefan}

In this section, we employ the energy bounds established in the previous section to pass to the limit in the sequence of minimizers as $\eps \to 0$, thereby establishing that the elliptic regularization scheme converges to an enthalpy solution of the Stefan problem (\Cref{t:main}). 
At the end of the section, we discuss the uniqueness of the limit of the scheme.

\subsection{The outer variation of \texorpdfstring{$\Fc_\eps$}{Fe}}
The limit equation is determined by the limit of outer variations; thus we investigate them in the following lemma.

\begin{lemma}[Outer variation]\label{lemma:outer-variation-epsilon} 
Let $D$ be an open set in $\R^d$, $g\in H^1(D)$, $h\in L^\infty(D)$ with $|h(x)|\le 1$, $u_M : D \to \R$ measurable, and $F \in L^2(D;\R^d)$.
Let $\eps>0$, if $u_\eps$ is a minimizer of the functional \eqref{e:efunctional} in either $\Uc_{\mathcal D}(D,g)$ or $\Uc_{\mathcal{N}}(D,g)$.
Then it holds
\begin{equation}\label{e:outer-variation-epsilon}
    \iint  \Bigg\{\left[1+\frac{1}{\eps}f_\eps(u_\eps)^2\right]\de_t u_\eps\, (\eta + \eps^4 \de_t \eta)  +\nabla u_\eps\cdot \nabla \eta + F\cdot \nabla \eta +\eps\, k_\eps(u_\eps)f_\eps(u_\eps) (\de_tu_\eps)^2\, \eta\Bigg\} \dif x \dif t=0,
\end{equation}
for all $\eta\in H^1_0(D\times(0,+\infty))$.
\end{lemma}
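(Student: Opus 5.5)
The identity \eqref{e:outer-variation-epsilon} should be the Euler--Lagrange equation of \eqref{e:efunctional}, computed for an outer perturbation $u_\eps+s\varphi$ with a test function chosen to cancel the exponential weight. I will first take $\eta\in C^\infty_c(D\times(0,+\infty))$ and set
\[
\varphi:=\eps^4 e^{t/\eps^4}\eta .
\]
Then $\varphi\in C^\infty_c(D\times(0,+\infty))$ and it vanishes near $t=0$ and near $\partial D\times(0,+\infty)$. Hence $u_\eps+s\varphi$ stays in $\Uc_{\mathcal D}(D,g)$ (resp. $\Uc_{\mathcal N}(D,g)$) for every $s\in\R$, and minimality gives $\frac{d}{ds}\big|_{s=0}\Fc_\eps(u_\eps+s\varphi)=0$, provided the derivative exists.

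\textbf{Differentiability.} Using $\partial_t(p_\eps(u,x))=\partial_zp_\eps(u,x)\,\partial_tu$, the integrand of $\Fc_\eps(u_\eps+s\varphi)$ is quadratic in $\partial_tu_\eps+s\partial_t\varphi$ and $\nabla u_\eps+s\nabla\varphi$. Its coefficient $|\partial_zp_\eps(u_\eps+s\varphi,x)|^2$ is smooth in $s$. By \Cref{lemma:estimates-f-k-epsilon}, $\partial_zp_\eps=f_\eps/\eps$ and $\partial_{zz}p_\eps=k_\eps/\eps^3$ are bounded, uniformly in $s$ and $x$. Since $\varphi$ has compact support and $\Fc_\eps(u_\eps)<\infty$ gives $u_\eps\in H^1_{loc}$, the $s$-derivative of the integrand is dominated by an $L^1$ function on $\operatorname{supp}\varphi$. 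Differentiation under the integral sign is therefore justified.

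\textbf{Computing the derivative.} By the chain rule,
\[
\partial_s\Big|_{s=0}\big|\partial_t(p_\eps(u_\eps+s\varphi))\big|^2
=2\,\frac{f_\eps}{\eps}\partial_tu_\eps\Big(\frac{f_\eps}{\eps}\partial_t\varphi+\frac{k_\eps}{\eps^3}\,\partial_tu_\eps\,\varphi\Big).
\]
Multiplying by the factor $\eps\cdot\eps^4$ in front, the first variation equals $2$ times
\[
\iint\frac{e^{-t/\eps^4}}{\eps^4}\Big\{\eps^4\Big[\Big(1+\tfrac{f_\eps^2}{\eps}\Big)\partial_tu_\eps\,\partial_t\varphi+\tfrac{f_\eps k_\eps}{\eps^3}(\partial_tu_\eps)^2\varphi\Big]+\nabla u_\eps\cdot\nabla\varphi+F\cdot\nabla\varphi\Big\}.
\]
Now substitute $\varphi=\eps^4e^{t/\eps^4}\eta$. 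This gives $\frac{e^{-t/\eps^4}}{\eps^4}\nabla\varphi=\nabla\eta$ and $e^{-t/\eps^4}\partial_t\varphi=\eta+\eps^4\partial_t\eta$. The remaining term becomes $e^{-t/\eps^4}\eps^4\frac{f_\eps k_\eps}{\eps^3}(\partial_tu_\eps)^2\eps^4e^{t/\eps^4}\eta/\eps^4=\eps f_\eps k_\eps(\partial_tu_\eps)^2\eta$. Collecting terms yields exactly \eqref{e:outer-variation-epsilon}.

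\textbf{Extension to $H^1_0(D\times(0,+\infty))$ (main obstacle).} I would approximate $\eta$ by $\eta_n\in C^\infty_c$ in $H^1$. The linear terms pass to the limit because $\partial_tu_\eps$ and $\frac{1}{\eps}f_\eps^2\partial_tu_\eps$ lie in $L^2$ by \eqref{e:bound-time-energy}, and $\nabla u_\eps,F\in L^2$ on sets of bounded time (\Cref{lemma:energy-bound}).

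The delicate term is $\eps k_\eps f_\eps(\partial_tu_\eps)^2\eta$, since $(\partial_tu_\eps)^2$ is only in $L^1$. To handle it I would approximate so that $\|\eta_n\|_{L^\infty}$ stays bounded and $\eta_n\to\eta$ a.e., and then use dominated convergence. This works whenever $\eta\in H^1_0\cap L^\infty$, for which truncation and mollification preserve the $L^\infty$ bound. For general $\eta$ the identity should be read with that term interpreted whenever it is integrable; in the later applications the test functions are smooth and bounded, so this restriction is harmless.
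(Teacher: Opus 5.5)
Your proof is correct and follows the paper's argument: the same competitor $u_\eps+s\varphi$ with $\varphi=\eps^4e^{t/\eps^4}\eta$, and the same chain-rule computation of the first variation. You also add a justification for differentiating under the integral sign, which the paper omits. Your caveat on extending to $H^1_0(D\times(0,+\infty))$ is well-founded, since the paper only asserts that $\eta\in C^\infty_c(D\times(0,+\infty))$ suffices; note, however, that your limit passage in the terms $\nabla u_\eps\cdot\nabla\eta$ and $F\cdot\nabla\eta$ also needs $\eta$ to have bounded support in time, because these fields are only locally-in-time square integrable — a restriction that still covers every test function used later, e.g.\ $\eta\,\ind^\eps_{t_0}$ with $\eta\in C^\infty_c(D\times\R)$.
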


\begin{proof}
It is sufficient to prove the statement for functions $\eta\in C^{\infty}_c(D\times(0,+\infty))$. 

Let $\phi\in C^{\infty}_c(D\times(0,+\infty))$ and $\eps>0$, then for all $s>0$, we define the competitor 
$$u_{\eps,s}(x,t) := u_\eps(x,t) + s\phi(x,t),$$
and we compute the energy $\mathcal F_\eps(u_{\eps,s})$. We have:
\begin{align}
    |\de_t u_{\eps,s} |^2 &= |\de_t u_\eps + s\de_t \phi|^2 = |\de_t u_\eps|^2 + 2s\de_tu_\eps \,\de_t\phi + o(s);\label{e:der-time-eps-outer-variation}\\
    |\nabla u_{\eps,s}|^2 &=|\nabla u_\eps + s\nabla \phi|^2 = |\nabla u_\eps|^2 + 2s\nabla u_\eps\cdot \nabla \phi + o(s);\label{e:gradient-eps-outer-variation}
\end{align}
and
\begin{align}
       \eps |\de_t (p_\eps(u_{\eps,s}))|^2 
            &= \eps \left|  \frac{1}{\eps}f_\eps(u_{\eps,s})(\de_t u_\eps+s\de_t\phi) \right|^2\nonumber\\
            &= \frac{1}{\eps}\left|f_\eps(u_\eps) + \frac{s}{\eps^2}k_\eps(u_\eps)\phi + o(s) \right|^2 \left| \de_t u_\eps + s\de_t\phi \right|^2\label{e:der_p-eps-outer-variation}\\
            &=\frac{1}{\eps}f_\eps(u_\eps)^2(\de_tu_\eps)^2\nonumber \\
                &\hspace{2cm} +\frac{2s}{\eps}\left(f_\eps(u_\eps)^2 \de_tu_\eps \de_t\phi + \frac{1}{\eps^2}k_\eps(u_\eps)f_\eps(u_\eps)(\de_tu_\eps)^2\phi\right) + {o}(s).\nonumber
\end{align}
The minimality condition of $u_\eps$ implies that the outer variation vanishes, i.e.,
\begin{equation*}
    \frac{\dif}{\dif s}\Bigg|_{s=0}\Fc_\eps(u_{\eps,s}) = 0.
\end{equation*}
Finally substituting in this expression the identities found in \eqref{e:der-time-eps-outer-variation}, \eqref{e:gradient-eps-outer-variation} and \eqref{e:der_p-eps-outer-variation}, we get
\begin{align*}
    \iint \frac{e^{-t/\eps^4}}{\eps^4}\Biggl\{ \eps^4\left[1+\frac{1}{\eps}f_\eps(u_\eps)^2\right]\de_t u_\eps\, \de_t \phi +\nabla u_\eps\cdot \nabla \phi + F\cdot \nabla\phi  + \eps k_\eps(u_\eps)f_\eps(u_\eps) (\de_tu_\eps)^2\, \phi \Biggr\}=0.
\end{align*}
We conclude using as a test the function
\[
    \varphi(x,t)=\eps^4e^{t/\eps^4}\eta(x,t)
        \quad\text{where}\quad 
        \eta\in C^{\infty}_c(D\times(0,+\infty)),
\]
and then, by direct computation the above identity precisely becomes \eqref{e:outer-variation-epsilon}.
\end{proof}

\subsection{The convergence argument}

Before stating the main result of the section, we introduce a couple of function and a notation that it will be used many times in the following.

\begin{itemize}\setlength{\itemsep}{6pt}

    \item Let $\eta \in C^\infty_c(D \times \R)$. We denote by $R_\eta > 0$ a radius such that, for all $t \ge0$, the support of $\eta(\cdot, t)$ is contained in the ball $B_{R_\eta}\subset \R^d$.
	\item Let $ t_0\ge0$,  we define $\ind^{\eps}_{t_0}(t)$ as a smooth approximation of $\ind_{[t_0,+\infty)}$ of size $\eps^6$, but this time with the approximation that is not centered in $t_0$, i.e.,
	\begin{equation}\label{e:approx_caratteristica_begin}
		\ind^\eps_{t_0} :=
		\begin{cases}
			0 &\text{if}\quad t\le t_0,\\
            1 &\text{if} \quad t \ge t_0+(1+2\eps)\eps^6,
        \end{cases}
	\end{equation}
	again with $\de_t \ind_{t_0}^\eps \equiv \eps^{-6}$ in $(t_0 + \eps^{7}, t_0 + (1-\eps)\eps^6)$ and $|\de_t \ind_{t_0}^{\eps}(t)|\le \eps^{-6}$ for all $t\ge0$.
\end{itemize}

\begin{remark}
    It is straightforward to observe that \Cref{t:main} holds even if we consider the test functions $\eta$ to have bounded support and we require that $\eta \in H^1_{0,L}(D_\infty)$. 
    However, we cannot remove the boundedness assumption on its support without some further compensation, since $\mu$ has no natural integrability assumption and thus
    \[
        \int_{D(t)} \mu\, \eta \dif x \qquad\text{and}\qquad \iint \mu\, \de_t \eta \dif x \dif t,
    \]
    are not well-defined if $\eta(\cdot,t) \not\in L^1(D)$ or $\de_t\eta \not\in L^1(D_\infty)$.
    We will treat again this topic in a subsequent work \cite{Paiano_Velichkov2026:monotonicity}, where we improve \ref{item:as:continuity} showing that the mushy coefficient $\mu(\cdot,t)$ is $L^1(D)$-strong continuous in time, even if $|D|=+\infty$.
\end{remark}

\begin{proof}[Proof of \Cref{t:main}]
   We proceed in several steps. First; in Steps 1-11, we give the detailed proof  in the Dirichlet case $u\in \mathcal U_{\mathcal D}(D,g)$; Step 12 is dedicated to the Neumann problem $u\in \mathcal U_{\mathcal N}(D,g)$. The strategy of the proof is as follows: we aim to use a test function of the form $\eta \ind_{[t_1,t_2]}$ for some $\eta \in C^\infty_c(D\times\R)$, and then pass to the limit as $\eps \to 0$ in the outer variation identity \eqref{e:outer-variation-epsilon} to recover all the desired properties. 
    Unfortunately, 
    both $u_\eps$ and $p_\eps(u_\eps)$ lack the necessary regularity to immediately justify such limits.
    To overcome this issue, we use the family of test functions $\eta \ind_{[t_1,t_2]}^\eps$, where $\ind_{[t_1,t_2]}^\eps := \ind_{t_1}^\eps(1-\ind_{t_2}^{\eps})$ is defined via \eqref{e:approx_caratteristica_begin}. This requires to  track and maintain all energy estimates throughout the limiting process. \medskip

    \paragraph{Step 1} {\it Convergence of $u_\eps$ and $p_\eps(u_\eps)$: proof of \ref{item:main_thm:convergence:a} and \ref{item:main_thm:convergence:b}.}\medskip

    Thanks to \Cref{lemma:energy-bound}, we already know that any family of minimizers $\{u_\eps\}_{\eps>0}$ is uniformly bounded in $H^1(D_T)$, for all $T>0$.
    Thus we can take a sequence $\eps_n\to0$ such that:
    \begin{itemize}
        \item $u_{\eps_n} \weak u$ {weakly in} $H^1(D_T)$ for all $T>0$; 
        \item $u_{\eps_n} \rightarrow u$ {strongly in} $L^2(D_T \cap C_R)$ for all $R,T>0$;
        \item $u_{\eps_n}(x,t) \to u(x,t)$ for almost-every $(x,t)\in D\times(0,+\infty)$,
    \end{itemize}
    where the limit function $u$ is in $\mathcal U(D,g)$. Moreover, since $|p_\eps(u_\eps)|\le 1$, we can also suppose that
    \begin{itemize}
        \item $p_{\eps_n}(u_{\eps_n})\weak\mu$ weakly in $L^2(D_T \cap C_R)$, for every $R,T>0$,
    \end{itemize}
    where $\mu\in L^\infty(D\times(0,+\infty))$ with $|\mu|\le 1$. \qed
        \medskip

    \begin{note}
        For the sake of readability, from now on we write $\eps=\eps_n$ and $\eps\to0$, meaning everywhere that we are working up to subsequence and we are taking the limit as $n\to+\infty$.
    \end{note}

    \paragraph{Step 2}{\itshape Proof of the energy bound \eqref{e:main_thm:temperature}.}\medskip

    Since $u_\eps \weak u$ in $H^1(D_T)$ for all $T>0$, then $\nabla u(t) \in L^2(D;\R^d)$ for almost every time $t\ge0$.
    Moreover, the semicontinuity of the norm under the $L^2$-weak convergence implies that for all $t_0 \ge0$ and $\tau>0$
    \[
        \int_{t_0 - \tau}^{t_0 + \tau} \int_{D} |\nabla u |^2 \dif x \dif t 
        \le \liminf_{\eps\to0}\int_{t_0 - \tau}^{t_0 + \tau} \int_{D} |\nabla u_\eps|^2 \dif x \dif t
        \le 16\,\tau \big(\|\nabla g \|_{L^2(D)}^2 + \|F\|_{L^2(D)}^2\big),
    \]
    with the convention that the integrals are extended to zero for negative times.
    Therefore, we have the universal bound 
    \[
    \frac{1}{2\tau}\int_{t_0-\tau}^{t_0+\tau} \| \nabla u(t)\|_{L^2(D)}^2 \dif t \le C,
    \]
    for all $t_0 \ge0$ and $\tau>0$, so $\nabla u \in L^\infty((0,+\infty);L^2(D;\R^d))$.
    Moreover, $\{u(t)\}_{t\ge0}$ are locally (in time) bounded in $H^1(D)$; thus for all $t_0\ge0$ there exists $w\in H^1(D)$, $w = g$ on $\de D$, such that (up to subsequences)
    \[
        u(\cdot,t) \xrightharpoonup[t\to t_0]{} w \qquad \text{weakly in}\quad H^1(D) 
        \qquad\text{and}\qquad
        u(x,t) \xrightarrow[t\to t_0]{} w(x) \qquad\text{for almost every}\quad x\in D.
    \]
    However, since $u\in \Uc$, up to subsequences we have $u(\cdot,t) \to u(\cdot,t_0)$ almost everywhere in $D$, and thus $w = u(\cdot,t_0)$ and the norm weak lower-semicontinuity concludes the proof of \ref{item:main_thm:temperature}.\qed\medskip  
        
    \paragraph{Step 3}
    {\it The limit functions $(u,\mu)$ solves the interior \eqref{e:weak_solution}, that is,
    \begin{equation}\label{e:main_thm:interior_stefan}
        \iint (u+\mu) \de_t\eta - \nabla u \cdot \nabla \eta  - F \cdot \nabla\eta \dif x \dif t, \qquad \text{for all}\quad \eta \in C^\infty_c(D\times(0,+\infty)).
    \end{equation}  }
    \medskip
    Let $\eta\in C^{\infty}_c(D\times(0,+\infty))$ and let $q_\eps$ be the function from \eqref{e:definition-p-f-k-q-epsilon} and \eqref{e:definition-q-epsilon}:
    \begin{equation*}
        q_\eps:\R\longrightarrow\R\ ,
        \qquad     
        q_\eps(z) = \int_{0}^z \frac{1}{\eps}f_\eps(\zeta)^2 \dif \zeta.
    \end{equation*}
    Then, we can write \eqref{e:outer-variation-epsilon} as
    \begin{equation}
        \iint \Biggl\{ \de_t\Big(u_\eps+q_\eps(u_\eps)\Big) (\eta + \eps^4\de_t \eta) +\nabla u_\eps\cdot \nabla \eta + F \cdot \nabla \eta  + \eps k_\eps(u_\eps) f_\eps(u_\eps) (\de_tu_\eps)^2\, \eta \Biggr\}=0.
    \end{equation}
    Integrating by parts $(u_\eps + q_\eps(u_\eps))$ in the time variable, we get
    \begin{equation*}
        \iint \Biggl\{ -\Big(u_\eps+q_\eps(u_\eps)\Big) \de_t\eta + \eps^4\de_t \Big(u_\eps+q_\eps(u_\eps)\Big) \de_{t} \eta +\nabla u_\eps\cdot \nabla \eta + F \cdot\nabla\eta  + \eps k_\eps(u_\eps) f_\eps(u_\eps) (\de_tu_\eps)^2\, \eta \Biggr\}=0.
    \end{equation*}
    The two $\eps$-terms vanish as $\eps\to0$, since, by \Cref{lemma:estimates-f-k-epsilon} and \eqref{e:bound-time-energy}, we have the bounds
    \[
         \iint k_\eps(u_\eps) f_\eps(u_\eps) (\de_tu_\eps)^2\, \eta \dif x \dif t
        \le C \| \eta\|_{L^\infty} \iint (\de_tu_\eps)^2 \dif x \dif t \le C \| \eta\|_{L^\infty}
    \]
    and
    \begin{align*}
       \iint \de_t \Big(u_\eps+q_\eps(u_\eps)\Big) \de_{t} \eta \dif x\dif t 
       &\le  C\, \left(\iint  \left(1+\frac{1}{\eps}f_\eps(u_\eps)^2\right) |\de_t u_\eps|^2 \dif x \dif t\right)^{\sfrac12}\\
       &\hspace{3cm}\cdot
       \left(\iint  \left(1+\frac{1}{\eps}f_\eps(u_\eps)^2\right) |\de_t \eta|^2 \dif x \dif t\right)^{\sfrac12}
       \\
       &\le C \, \|\de_t \eta\|_{L^2(D_\infty)}\,\eps^{-\sfrac12},
    \end{align*}
    and so they vanish being multiplied by $\eps$ and $\eps^{4}$, respectively.
    Finally, by \Cref{lemma:estimate-q-epsilon} it holds that
    \begin{equation*}
        \left| p_\eps(z,x) - q_\eps(z,x)\right| \le \eps \qquad \text{for all}\quad (z,x)\in\R\times D,
    \end{equation*}
    and therefore, up to an error $\mathcal O(\eps)$, we can replace $q_\eps(u_\eps)$ with $p_\eps(u_\eps)$.
    Thus, the previous estimates imply that the Outer Variation is of the following form
    \begin{equation}\label{e:step3-last-estimate}
        \iint  \Big(u_\eps+p_\eps(u_\eps)\Big)\de_t\eta - \nabla u_\eps\cdot \nabla \eta - F \cdot \nabla \eta  \dif x \dif t=\Oc(\eps),
    \end{equation}
    and so, as $\eps\to0$, we get \eqref{e:main_thm:interior_stefan}.\qed\medskip

\paragraph{Step 4}\label{par:Step3_main_thm}
{\it For all $0\le t_1 < t_2 < +\infty$ and $\eta\in C^\infty_c(D\times\R)$ there exists the limit 
\begin{equation}\label{e:convergence_step4}
	\lim_{\eps\to0} \iint \Big(u_\eps + p_\eps(u_\eps)\Big) \eta(x,t) \de_t(\ind_{t_1}^\eps(1-\ind_{t_2}^\eps) ) \dif x \dif t = - \int_{t_1}^{t_2} \int_D (u + \mu) \de_t \eta - \nabla u \cdot \nabla \eta - F \cdot \nabla \eta \dif x \dif t,
\end{equation}
where $\ind_{t_1}^\eps$ is the one defined in \eqref{e:approx_caratteristica_begin}.
}\medskip

We observe that, since for $\eps < t_2-t_1$, $\de_t \ind_{t_1}^\eps$ and $\de_t \ind_{t_2}^{\eps}$ have disjoint supports, we have that
$$\de_t(\ind_{t_1}^\eps\,(1-\ind_{t_2}^\eps))=\de_t \ind_{t_1}^\eps - \de_t \ind_{t_2}^{\eps},$$
so we only need to prove that, for all $t_0\ge0$, the following limit exists
\begin{equation}\label{e:step4-begin-time}
	\lim_{\eps\to0} \iint \Big(u_\eps + p_\eps(u_\eps)\Big) \eta(x,t) \de_t(\ind_{t_1}^\eps ) \dif x \dif t = - \int_{t_1}^{+\infty} \int_D (u + \mu) \de_t \eta - \nabla u \cdot \nabla \eta - F \cdot \nabla \eta \dif x \dif t.
\end{equation}

Let $\eta\in C^\infty_c(D\times\R)$. We notice that $\eta\ind_{t_0}^\eps \in C^\infty_c(D\times(0,+\infty))$, but it depends on $\eps$, so we cannot apply directly \eqref{e:step3-last-estimate}, in which the test function was fixed. 
Instead, we start again from the outer variation \eqref{e:outer-variation-epsilon} applied to $\ind_{t_0}^\eps\eta$ and, arguing as in \emph{Step 3}, we get
\begin{align}
	\iint \Big(u_\eps+p_\eps(u_\eps)\Big)\eta \de_t(\ind_{t_0}^\eps) \dif x \dif t
	&= -\iint \Big(\big(u_\eps+p_\eps(u_\eps)\big)\de_t\eta - \big(\nabla u_\eps\cdot \nabla \eta\big) - F \cdot \nabla \eta\Big) \ind_{t_0}^\eps \dif x \dif t \label{e:conv-full-line1}\tag{$A_{1,\eps}$}\\
    &\hspace{2cm}+\eps^4\iint\de_t \Big(u_\eps+q_\eps(u_\eps)\Big)\eta\de_t\ind_{t_0}^\eps \dif x \dif t + \mathcal{O}(\eps).  \label{e:conv-full-line3}\tag{$A_{2,\eps}$}
\end{align}
We need to show that the expression on the Left-Hand side admits limit as $\eps\to0$, thus that all \eqref{e:conv-full-line1} and \eqref{e:conv-full-line3} separately converge.
The first term \eqref{e:conv-full-line1} converges because, by \ref{item:main_thm:convergence:a} and \ref{item:main_thm:convergence:b}, $u_\eps$ and $p_\eps(u_\eps)$ are (locally) weakly convergent, $\eta$ is compactly supported and $\ind_{t_0}^\eps \to \ind_{[t_0,+\infty)}$ pointwise. Specifically, it converges to the Right-Hand side of \eqref{e:convergence_step4}, and thus we need to show that  \eqref{e:conv-full-line3} vanishes in the limit.
To deal with \eqref{e:conv-full-line3}, we observe that, thanks to \eqref{e:approx_caratteristica_begin} we have
\[
    0\le \de_t \ind_{t_0}^\eps(t) \le \frac{1}{\eps^6} \ind_{[t_0,t_0+2\eps^{6}]}(t),
\]
so, for all $R>0$, it holds
\[
\| \de_t \ind_{t_0}^\eps \|_{L^2(D\cap C_R)} \le |D \cap B_R|^{\sfrac12}\; \eps^{-3}.
\]
In particular, together with the energy bound \eqref{e:bound-time-energy} and the H\"older inequality, the previous estimate implies that
\[
\begin{split}
	\left|\iint \de_t \Big(u_\eps+p_\eps(u_\eps)\Big)\eta \de_t(\ind_{t_0}^\eps) \dif x \dif t \right|
		&= \left| \iint \left(1 + \frac1\eps f_\eps(u_\eps)^2\right)\de_t u_\eps \eta \de_t \ind_{t_0}^\eps \dif x \dif t \right|\\
		&\le
				\left| \iint \left(1 + \frac1\eps f_\eps(u_\eps)^2\right)(\de_t u_\eps)^2  \dif x \dif t \right|^{\sfrac12} \\
		&\hspace{2cm}			\cdot
				\left| \iint \left(1 + \frac1\eps f_\eps(u_\eps)^2\right) \eta^2 (\de_t \ind_{t_0}^\eps)^2 \dif x \dif t \right|^{\sfrac12}\le C\, \eps^{-\sfrac72},
\end{split}
\]
where $C = C(d,F,g) |D \cap B_{R_\eta}|^{\sfrac12}\|\eta\|_{L^\infty(D_\infty)}$.

Hence, for $\eps \to 0$, it holds that $\text{\eqref{e:conv-full-line3}} = \mathcal O(\eps^{\sfrac12})$, and thus
\begin{align*}
	\iint \Big(u_\eps+p_\eps(u_\eps)\Big)\eta \de_t(\ind_{t_0}^\eps) \dif x \dif t
	&= -\iint \Big(\Big(u_\eps+p_\eps(u_\eps)\Big)\de_t\eta- \nabla u_\eps\cdot\nabla\eta - F \cdot\nabla \eta \Big)\ind_{t_0}^\eps \dif x \dif t + \mathcal{O}(\eps^{\sfrac12}),
\end{align*}
and so we conclude the proof of \eqref{e:convergence_step4}.\qed
\medskip

\paragraph{Step 6}
{\it Definition of $\mu(\cdot,t)$ for almost every time $t\ge0$.
}\medskip

Since $\mu\in  L^\infty(D\times(0,+\infty))$, its definition of for almost-every time follows from Fubini's theorem. 
Let us briefly recall the construction by duality. Let $\mu\in L^\infty(D\times(0,+\infty))$ and let $\mathcal N$ be a countable set of functions in $C^\infty_c(D)$, which is dense in $L^1(D)$.
Since $\mathcal N$ is countable, we can find a set of times $\mathcal T\subset(0,+\infty)$ such that $\mathcal L^1((0,+\infty)\setminus\mathcal T)=0$ and such that every $t\in \mathcal T$ is a Lebesgue point for every function 
$$T_{\mu,\eta}(t):=\int_{D}\mu(x,t)\eta(x)\dif x,$$
with $\eta\in\mathcal N$, which means that for all $t\in \mathcal T$ and all $\eta\in\mathcal N$ it holds
$$
    T_{\mu,\eta}(t)
        =\lim_{\tau\to 0}\frac{1}{2\tau}\int_{t-\tau}^{t+\tau}T_{\mu,\eta}(s)\dif s
            \qquad\text{and}\qquad 
        \lim_{\tau\to 0}\frac{1}{2\tau}\int_{t-\tau}^{t+\tau}|T_{\mu,\eta}(s)-T_{\mu,\eta}(t)|\dif s=0.
$$
Since, we have the inequality
$$|T_{\mu,\nu}(t)|\le \|\mu\|_{L^\infty(D_\infty)}\|\eta\|_{L^1(D)},$$
we get that there is a function $\mu(\cdot,t)\in L^\infty(D)$ such that 
$$\|\mu(\cdot,t)\|_{L^\infty(D)}\le \|\mu\|_{L^\infty(D_\infty)}\qquad\text{and}\qquad T_{\mu,\eta}(t)=\int_D\mu(x,t)\eta(x)\,dx\quad\text{for all}\quad \eta\in L^1(D).$$
In particular, the density of $\mathcal N$ implies that, for every $t\in\mathcal T$, we have
\begin{align*}
\int_D\mu(x,t)\eta(x)\dif x&=\lim_{s\to0}\frac1{s}\int_{t-s}^{t}\int_D\mu(x,\tau)\eta(x)\dif x \dif \tau
=\lim_{s\to0}\frac1{s}\int_{t}^{t+s}\int_D\mu(x,\tau)\eta(x)\dif x \dif \tau,
\end{align*}
for all $\eta \in C^\infty_c(D)$ and
\begin{align*}
\int_D\mu(x,t)\eta(x,t)\dif x
    &=\lim_{s\to0}\frac1{s}\int_{t-s}^{t}\int_D\mu(x,\tau)\eta(x,\tau)\dif x \dif \tau
    =\lim_{s\to0}\frac1{s}\int_{t}^{t+s}\int_D\mu(x,\tau)\eta(x,\tau)\dif x \dif \tau,
\end{align*}
for all $\eta\in C^\infty_c(D\times \R)$.
\qed\medskip

\paragraph{Step 7}
{\it For all $\eta\in C^\infty_c(D\times (0,+\infty))$, and for all $t_0 \in \mathcal T$ there exist the following limits:
\begin{equation}\label{e:formula_for_Lebesgue_time}
	\lim_{\eps\to0} \iint (u_\eps + p_\eps(u_\eps)) \eta\,\de_t(\ind_{t_0}^\eps) \dif x \dif t =
    \int_{D(t_0)} (u + \mu ) \eta \dif x.
\end{equation}}
\medskip

By construction, $\de_t\ind_{t_0}^{\eps}$ and $\eps^{-6}\ind_{[t_0,t_0+\eps^6]}$ differ only in two intervals of size $\eps^7$.
Therefore it holds that
\[
    \left|
        \frac{1}{\eps^6}\int_{t_0}^{t_0+ \eps^6}\int_D (u+\mu)\,\eta \dif x \dif t 
        - \iint (u+\mu)\,\eta\, \de_t(\ind_{t_0}^{\eps}) \dif x \dif t 
    \right| 
        \le C( D,\eta,g,\bar t)\, \eps,
\]
where $\eta\in C^\infty_c(D\times(0,+\infty))$. Thus if one of the two admits limit as $\eps\to0$, the same holds for the second one and the two limits coincide. Thanks to the previous Step 6 and to the fact that $u$ is Sobolev in space-time, $u\in \Uc(D,g)$, we have that
\[
\int_{D(t_0)} (u+\mu) \,\eta \dif x
\]
is well-defined and it is the limit of the mean values around that time, i.e.,
\begin{align*}
\int_{D(t_0)} (u + \mu ) \,\eta \dif x 
	&= \lim_{\eps\to0} \frac{1}{\eps^6}\int_{t_0}^{t_0+ \eps^6}\int_D (u+\mu)\,\eta \dif x \dif t=\lim_{\eps\to0} \iint (u+\mu)\eta \de_t(\ind_{t_0}^{\eps}) \dif x \dif t.
\end{align*}
Now, by \textit{Step 3}, $(u,\mu)$ solves \eqref{e:main_thm:interior_stefan} for all smooth functions with compact support. Thus, since for all $t_0>0$ $\eps < t_0$, $\eta {\ind}^\eps_{t_0} \in C^\infty_c(D\times(0,+\infty))$, we have
\begin{align*}
    \iint (u+\mu)\,\eta \,\de_t(\ind_{t_0}^{\eps}) \dif x \dif t
        &=  - \iint (u+\mu)\,\Big( \de_t\eta\,\ind_{t_0}^{\eps} - \de_t(\eta\,\ind_{t_0}^{\eps}) \Big) \dif x \dif t \\
        &= - \iint \left((u+\mu) \de_t \eta - \nabla u \cdot \nabla \eta - F \cdot \nabla \eta \right) {\ind}_{t_0}^\eps(t) \dif x \dif t.
\end{align*}
Therefore it holds
\begin{align*}
\int_{D(t_0)} (u + \mu ) \,\eta \dif x 
	&= -	\lim_{\eps\to0} \iint (u+\mu)\,\eta \,\de_t(\ind_{t_0}^{\eps}) \dif x \dif t \\
	&= - \lim_{\eps\to0} \iint\left( (u+\mu) \de_t \eta - \nabla u \cdot \nabla \eta - F \cdot \nabla \eta \right) \ind_{t_0}^\eps \dif x \dif t\\
	&= - \int_{t_0}^{+\infty}\int_D \left((u+\mu) \de_t \eta - \nabla u \cdot \nabla \eta - F \cdot \nabla \eta \right) \dif x \dif t\\
    &=\lim_{\eps\to0} \iint \Big(u_\eps + p_\eps(u_\eps)\Big) \eta(x,t) \de_t(\ind_{t_0}^\eps ) \dif x \dif t,
\end{align*}
where the last inequality is due to \eqref{e:step4-begin-time}. This concludes the proof of \textit{Step 7.}
\qed
\medskip

With the identity from Step 7, we are now in position to define $\mu(\cdot,t)$ for every time $t\ge 0$.

\paragraph{Step 8}
{\it
For all $t\ge0$, $\mu(\cdot,t) \in L^\infty(D)$ is well defined as
$$\int_D\mu(x,t)\eta(x)\dif x:=\lim_{\substack{\tau\to t \\ \tau\in \mathcal T}}\int_D\mu(x,\tau)\eta(x)\dif x\quad\text{for all}\quad \eta\in C^\infty_c(D).$$ 
In particular, $\mu(\cdot,t)$ is continuous with respect to the weak-$\ast$ topology and,
for all $0 \le t_1 < t_2$, all $\eta\in C^\infty_c(D\times\R)$ the integral identity \eqref{e:weak-equation-F} holds.
}\medskip

By {\it Step 4} and {\it 7}, \eqref{e:weak-equation-F} holds for all $0<t_1 < t_2$ with $t_1,t_2 \in \mathcal T$, and all $\eta\in C^\infty_c( D\times\R)$. Thus, for every $\eta\in C^\infty_c(D)$ we have 
\[
        \int_{D(t_2)}(u+\mu)\eta\dif x - \int_{D(t_1)}(u+\mu)\eta\dif x
        = -\int_{t_2}^{t_1}\int_{D} \nabla u \cdot \nabla \eta + F \cdot \nabla \eta \dif x \dif t,
\]
which implies 
\begin{align*}
\left|\int_{D}\mu(x,t_2)\eta(x)\dif x-\int_{D}\mu(x,t_1)\eta(x)\,dx\right|&\le \|\eta\|_{L^2(D)}\|u(\cdot,t_2)-u(\cdot,t_1)\|_{L^2(D)}\\
&\qquad +\|\nabla \eta\|_{L^2(D)}\int_{t_2}^{t_1}\left(\|\nabla u(\cdot,t)\|_{L^2(D)}+\|F\|_{L^2(D)}\right)\dif t,
\end{align*}
for all $t_1,t_2\in \mathcal T$. Thus, the limit 
$$T_{\mu,\eta}(t):=\lim_{\substack{\tau\to t \\ \tau\in \mathcal T}}\int_D\mu(x,\tau)\eta(x)\dif x,$$
exists for every $t\ge 0$. Moreover, since we have the bound
$$|T_{\mu,\eta}(t)|\le \|\eta\|_{L^1(D)}\|\mu\|_{L^\infty(D_\infty)},$$
we get the existence of a function $\mu(\cdot,t)\in L^\infty(D)$ such that 
$$T_{\mu,\eta}(t)=\int_D\mu(x,t)\eta(x)\dif x.$$
Finally, the validity of \eqref{e:weak-equation-F} for all times follows by passing to the limit \eqref{e:weak-equation-F} for times in $\mathcal T$. This concludes the proof of Step 8. \qed\medskip

We already showed that $\mu(\cdot,t)$ is well defined, is weak-$\ast$ continuous in time and that the couple $(u,\mu)$ solves the integral identity \eqref{e:weak-equation-F}. 
It is left to show the compatibility condition as well as \ref{item:main_thm:convergence:c}.
Also here the scale of $\de_t \ind_{t_0}^\eps$ plays a fundamental role.\medskip

 In the final steps of the proof, we will use the following well-known properties of the traces of Sobolev functions (see for instance Evans \cite{Evans2010:bookPDE} or Maz'ya \cite{Mazya2011:BookSobolev}):
\begin{itemize}
   
    \item If $v\in H^1(D\times(0,T))$ for some $T>0$, then the trace $v(\cdot,t)\in L^2(D)$ exists for every $t\in[0,T]$, and we have the following estimate
\begin{equation}\label{e:holder-continuity-traces}
            \|v(\cdot,t) - v(\cdot,s) \|_{L^2(D)} \le \|\de_t v\|_{L^2(D_T)} |t-s|^{\sfrac12}\quad\text{for all}\quad 0\le s<t\le  T.
\end{equation}
      \item Suppose that $v_n\in H^1(D\times(0,T))$ converges to $v\in H^1(D\times(0,T))$ weakly in $H^1(D\times(0,T))$. Then, all the traces converge of $v_n$ converge to the traces of $v$ strongly in $L^2_{loc}(D)$, that is 
      \begin{equation}\label{e:strong-convergence-traces}
       \lim_{n\to+\infty} \|v_n(\cdot,t)- v(\cdot,t)\|_{L^2(D\cap B_R)}=0\quad\text{for all}\quad t\in[0,T] \quad \text{and} \quad R>0.
    \end{equation} 
\end{itemize}

\paragraph{Step 9}
{\it
For all $t_0\ge0$ it holds the following limit
\[
    \int_\R p_\eps(u_\eps) \,\de_t(\ind_{t_0}^\eps) \dif t
        \xrightharpoonup[\eps\to0]{\ast} \mu(\cdot,t_0) \qquad \text{weakly-$\ast$ in}\quad L^\infty(D). 
\]
}\medskip

First, we observe that, fixed $\eta\in C^\infty_c(D\times \R)$, for all $t_0\ge0$, it holds
\begin{equation*}
	\lim_{\eps\to0} \iint (u_\eps + p_\eps(u_\eps)) \eta\,\de_t(\ind_{t_0}^\eps) \dif x \dif t =
    \int_{D(t_0)} (u + \mu ) \eta \dif x.
\end{equation*}
Indeed, the identity holds for $t_0\in\mathcal T$ and the right-hand side is continuous in time by \eqref{e:convergence_step4}, while the left-hand side by {\it Step 8}.
To complete the proof, thanks to the linearity of the weak-$\ast$ limit, we reduce ourselves to prove that
\[
    \int_\R u_\eps\,\de_t(\ind_{t_0}^\eps) \dif t
        \xrightharpoonup[\eps\to0]{\ast} u_\eps(\cdot,t_0) \qquad \text{weakly-$\ast$ in}\quad L^\infty(D). 
\]

To do so, we observe that, for all $\eta\in C^\infty_c(D\times\R)$ by \eqref{e:strong-convergence-traces}
$$\lim_{\eps\to0}\int_{D(t_0)} u_\eps\, \eta \dif x =\int_{D(t_0)} u\, \eta \dif x.$$
On the other hand, by the continuity of $\eta$ in time we have 
\begin{align*}
    \iint u_\eps\, \eta \, \de_t \ind_{t_0}^\eps \dif x\dif t 
        &= \iint u_\eps(x,t) \eta(x,t_0) \de_t\ind_{t_0}^\eps(t)  \dif x \dif t+o(1)\\
        &= \int_{D} u_\eps(x,t_0) \eta(x,t_0) \dif x \\
        &\hspace{1cm}+ \iint \left( u_\eps(x,t) - u_\eps(x,t_0)\right) \eta(x,t_0) \, \de_t \ind_{t_0}^\eps \dif x \dif t 
        + o(1).
\end{align*}
Finally, by the energy bound \eqref{e:bound-time-energy}, $\| \de_tu_\eps\|_{L^2(D_\infty)}\le C$ and the estimate \eqref{e:holder-continuity-traces}, we get 
\begin{align*}
\left|\int_D\left( u_\eps(x,t) - u_\eps(x,t_0)\right) \eta(x,t_0)\, \dif x\right|&\le  \|\eta(\cdot,t_0)\|_{L^2(D)}\|u_\eps(\cdot,t) - u_\eps(\cdot,t_0) \|_{L^2(D)} \le C |t-t_0|^{\sfrac12}.
\end{align*}
Since $|t-t_0|\le \eps^6$ when $t$ is in the support of $\de_t \ind_{t_0}^\eps$, integrating in time, we get 
$$\iint \left( u_\eps(x,t) - u_\eps(x,t_0)\right) \eta(x,t_0) \, \de_t \ind_{t_0}^\eps \dif x \dif t \le \frac{C}{\eps^6}\, \int_{t_0}^{t_0+\eps^6} |t-t_0|^{\sfrac12} \dif t \le C \eps^{3},$$
where $C=C(d,g,D,\eta)>0$ is as in \textit{Step 4}. 
Then, by taking $\eps\to0$, we get 
\begin{align*}
  \lim_{\eps\to0}  \iint u_\eps\, \eta \, \de_t \ind_{t_0}^\eps \dif x\dif t = \lim_{\eps\to0}\int_{D(t_0)} u_\eps\, \eta \dif x =\int_{D(t_0)} u\, \eta \dif x,
\end{align*}
which concludes the proof of \textit{Step 9}.
\qed\medskip

\paragraph{Step 10}
{\it 
For all $t_0\ge0$, the limit \eqref{e:main_thm:convergence_mu} holds.
}\medskip

We need to prove that
\[
    p_\eps(u_\eps(\cdot,t_0)) \xrightharpoonup[\eps\to0]{\ast} \mu(\cdot,t_0) \qquad \text{weakly-$\ast$ in}\quad L^\infty(D).
\]
By \Cref{lemma:estimates-f-k-epsilon}, it holds that for all $t,s\ge0$ it holds
\[
    |p_\eps(u_\eps(x,t),x) - p_\eps(u_\eps(x,s),x)| \le \frac1\eps |u_\eps(x,t) - u_\eps(x,s)|.
\]
For the sake of brevity, let us adopt the notation $p_\eps(x,t) := p_\eps(u_\eps(x,t),x)$ in this step.
By {\it Step 9} it is enough to prove that for all $\eta \in C^\infty_c(D)$ it holds
\begin{equation}\label{e:step9-convergence-fixed-time}
    \lim_{\eps\to0}
    \left|
        \iint p_\eps(x,t) \,\eta\, \de_t \ind_{t_0}^\eps \dif x \dif t 
        - \int_D p_\eps(x,t_0)\, \eta \dif x
    \right|=0,
\end{equation}
the statement follows by density and by the triangular inequality.

For all $t_0\ge0$ and $\eps>0$ it holds
\[
    \int_{\R} \de_t \ind_{t_0}^{\eps}(t)\dif t =1 \qquad \text{and}\qquad \de_t \ind_{t_0}^\eps(t) \ge0,
\]
and thus, for all $\eta\in C^\infty_c(D)$, we get that
\begin{align*}
    \left|
        \iint p_\eps(x,t) \,\eta(x)\, \de_t \ind_{t_0}^\eps \dif x \dif t 
        - \int_D p_\eps(x,t_0)\, \eta(x) \dif x
    \right|
    &= 
        \left| 
            \iint\Big( p_\eps(x,t) - p_\eps(x,t_0)\Big)\eta(x)\,  \de_t \ind_{t_0}^\eps \dif x \dif t
        \right|\\
    &\le \|\eta\|_{L^2(D)} \iint\big\| p_\eps(x,t) - p_\eps(x,t_0)\big\|_{L^2(D)} \de_t \ind_{t_0}^\eps \dif x \dif t.
\end{align*}
Moreover, by \Cref{lemma:estimates-f-k-epsilon} and the trace regularity of $H^1$-functions, we get that
\[
    \big\| p_\eps(x,t) - p_\eps(x,t_0)\big\|_{L^2(D)} \le \frac{1}{\eps} \big\| u_\eps(\cdot,t) - u_\eps(\cdot,t_0)\big\|_{L^2(D)} \le \frac1\eps |t-t_0|^{\sfrac12}
\]
and then since $\de_t \ind_{t_0}^\eps\le \eps^{-6}\ind_{[t_0,t_0+2\eps^6]}$, we obtain
\[
    \iint\big\| p_\eps(\cdot,t) - p_\eps(\cdot,t_0)\big\|_{L^2(D)} \de_t \ind_{t_0}^\eps \dif x \dif t
        \le \frac{1}{\eps^{6}}\int_{t_0}^{t_0+2\eps^6}\frac1\eps |t-t_0|^{\sfrac12} \le 4 \eps^{2},
\]
that implies \eqref{e:step9-convergence-fixed-time} and so it concludes the proof of {\it Step 10}.\qed

\paragraph{Step 11}{\it The compatibility property \ref{item:main_thm:enthalpy:b} holds for all $t_0\ge0$.}\medskip

We will show that for all $t_0\ge0$ there exists a subsequence $\{\eps_n\}_{n\in\NN}$, $\eps_n\to0$, such that
\[   p_{\eps_n}(u_{\eps_n}(\cdot,t_0)) \xrightharpoonup[n\to+\infty]{\ast} \pm 1 \qquad\text{weakly-$\ast$ in}\quad L^\infty(\Omega_u^\pm(t_0));
\]
and thus we conclude, since the whole sequence $p_{\eps_n}(u_{\eps_n}(\cdot,t_0))$ converges weakly-$\ast$ to $\mu(x,t)$ as $\eps\to0$.
We will prove the statement for the positive part; 
the negative counterpart follows by an analogous argument.\medskip

Since $u_\eps\weak u$ in $\Uc$, then by \eqref{e:strong-convergence-traces} we have
\[
    u_\eps(\cdot,t_0) \xrightarrow[\eps\to0]{} u(\cdot,t_0) \qquad\text{strongly in}\quad L^2(D\cap B_R),
\]
for all $t_0\ge0$ and $R>0$. 
Thus there exists a (sub)sequence $\{\eps_n\}_{n\in\NN}$ such that
\[
    u_{\eps_n}(x,t_0) \xrightarrow[j\to+\infty]{} u(x,t_0) \qquad\text{for almost-every}\quad x\in D.
\]
In particular, for almost every $x\in \Omega_u^+(t_0)$, there exists $\bar n = \bar n(x,t_0)\gg1$ such that 
\[
    u_{\eps_n}(x,t_0) \ge \frac{1}{2} u(x,t_0), 
\]
and hence there exists $\hat n = \hat n(x,t_0)$ such that
\[
    p_{\eps_n}(u_{\eps_n}(x,t_0)) = 1 \qquad\text{for all}\quad n\ge\hat n;
\]
so $p_{\eps_n}(u_{\eps_n}(\cdot,t_0))$ converges almost everywhere to $1$ in $\Omega_u^+(t_0)$.
This concluded the proof of {\it Step 11}.

\paragraph{Step 12}{\it Proof of \ref{item:main_thm:initial-mushy-coefficient}.}
\medskip

    By construction, $u_\eps(x,0) = g(x)$ for all $\eps>0$, thus by {\it Step 10} we only need to prove that
    \begin{equation}\label{e:boh-time-zero}
		\lim_{\eps\to0} p_\eps(g,x) = \ind_{\Omega_g^+}(x) - \ind_{\Omega_g^-}(x) + h(x) \ind_{\{g=u_M\}}(x) \qquad \text{for almost every}\quad x\in D,
    \end{equation}
    then the Dominate convergence theorem applies and thus the limit holds in $L^p(D \cap B_R)$, for all $p\ge1$ and $R>0$, and also   weakly-$\ast$ in $L^\infty(D)$.
    However, \eqref{e:boh-time-zero} comes straightforwardly by the definition of $p_\eps$ as
    \[
		\lim_{\eps\to0} p_\eps(z,x) = \ind_{\{z>u_M\}}(x) - \ind_{\{z<u_M\}}(x) + h(x) \ind_{\{z=u_M\}}(x) \qquad\text{for all}\quad (z,x) \in \R\times D.
	\]
    This conclude the proof of {\it Step 12} and hence the proof of \Cref{t:main} in the Dirichlet case $u\in \mathcal U_{\mathcal D}(D,g)$.
\qed

\paragraph{Step 13}{\it The Neumann case.}\medskip

The Neumann case $u\in \mathcal U_{\mathcal N}(D,g)$ follows exactly the same steps. We notice that \emph{Steps 1-3} involve only the energy estimates from \Cref{sec:energy-estimates}, which also hold for minimizers in $\Uc_{\mathcal N}(D,g)$.
\emph{Steps 4-12} are local properties that involve perturbations only along the time direction and therefore, they are equally valid in the Neumann case and hence, we conclude the proof of \Cref{t:main}.
\end{proof}

\section{Uniqueness of the Cauchy problem}\label{sec:initial_enthalpy}

In this section we discuss the uniqueness of solutions of the Stefan problem and of the convergence of the above scheme.
Specifically, we prove the comparison principles (\Cref{t:comparison-dirichlet,t:comparison-neumann}) and yet we prove that the limit of the elliptic approximation scheme is unique and that every enthalpy solution (with time-independent Dirichlet boundary condition or Neumann boundary condition) is the limit of such scheme, and it is completely determined by the choice of $g$, $h$, $F$, and $u_M$.\medskip

The strategy of the proof of \Cref{t:comparison-dirichlet} and \Cref{t:comparison-neumann} follows some general ideas that can be traced back to the work of Kamin \cite{Kamenomostskaya} and it has seen a series of generalization (see, for instance, \cite{Friedman_1968,Meirmanov_1992,DingDuGuo2021:StefanProblemFisher-unbounded}).
The main technical novelty in our proof of \Cref{t:comparison-dirichlet} lies in the choice of the auxiliary problem \eqref{e:appendix-system-unbounded}, that allows to overcome the integrability issues in the Gr\"onwall like argument.

\begin{proof}[\bf Proof of \Cref{t:comparison-dirichlet}]
	Taking the difference of the inequalities for $u_2$ and $u_1$, we get
	\begin{equation}\label{e:comparison-H1}
		\int_{D(t)} V\phi \dif x \bigg|_{t=t_1}^{t_2} \ge \int_{t_1}^{t_2} \int_D V \,\de_t\phi - \nabla U \cdot\nabla \phi \dif x \dif t,
	\end{equation}
	for all $\phi \in C^\infty(D\times\R)$, where 
	\[
		U(x,t) := u_2(x,t) - u_1(x,t) 
			\qquad \text{and}\qquad
		V(x,t) := U(x,t) + (\mu_2(x,t)-\mu_1(x,t)).
	\]
	The underlying  idea of the proof does not change whether $D\subset\R^d$ is a bounded or unbounded set. 
	However, some technical adjustments are needed to deal with the unbounded case, and therefore, we treat them separately. 
	In particular, the bounded case will be essentially equivalent to the one presented in \cite{Meirmanov_1992}, but since we need some adjustments for the unbounded case, we decided to give the full proof in both cases.	Fixed  $T\in(0,T^\ast)$, we introduce two families that will be useful in both proofs.
	
	    \begin{enumerate}[{\itshape(i)}]
            \setlength\itemsep{0.4cm}

        \item We consider a sequence of smooth bounded open sets  $D^n\subset D$, $n\in\mathbb N$, such that $D^n\uparrow D$ as $n\to+\infty$. We also set $D_T^n=D^n\times[0,T)$. By the divergence theorem, for all nonnegative functions $\phi\in H^1_0(D_T)$  with $\{\phi>0\} = D^n_T$ and $\phi \in H^2(D^n_T)$ we can recast \eqref{e:comparison-H1} in $D_T$ as
         	\begin{equation}\label{e:comparison-H2}
				\int_{D^n(T)} V\phi \dif x  \ge \iint_{D^n_T} V\big(\de_t\phi + A\,\Delta \phi\big) \dif x \dif t + \int_{0}^{T}\int_{\de D^n} U |\nabla \phi| \dif \Hc^{d-1}x \dif t,
         	\end{equation}
            since $V(\cdot,0)\phi(\cdot,0)\ge 0$ in $D$ and $\de_\nu \phi = - |\nabla \phi|$ on the lateral boundary $\de_L D^n_T$, and $A\in L^\infty(D_T)$ is defined as
			\[
				A(x,t) := \begin{cases}\displaystyle\frac{U(x,t)}{V(x,t)}&\quad\text{if}\quad U(x,t)\neq0,\\
                \quad0&\quad\text{if}\quad U(x,t)=0.
                \end{cases}
			\]
			We notice that $A$ is wall defined and satisfies $0\le A \le 1$ almost everywhere in  $D_T$. This is due to the fact that if $u_2 > u_1$, then $\mu_2 \ge \mu_1$ (analogously, if $u_1 > u_2$, then $\mu_1 \ge \mu_2$). This also proves the inclusion
            \[
                 \{(x,t) \in D_T : U(x,t) >0\} \subset \{(x,t) \in D_T : V(x,t) >0 \}.
            \]

        \item 
        There exists a sequence $\{A_n\}_{n\in\NN}\subset C^\infty(\R^d\times \R)$, with the following properties:
        \begin{enumerate}[label={\itshape(ii.\alph*)}]
            \setlength\itemsep{0.2cm}
            \item $0\le A(x,t) \le A_n(x,t) \le 1$ for almost every $(x,t) \in D_T$;
            \item $A_n(x,t) \xrightarrow[n\to+\infty]{} A(x,t)$ for almost every $(x,t) \in D_T$;
            \item $(A_n-A)\xrightarrow[\phantom{n\to\infty}]{}0$ strongly in $L^1 (D_T)\cap L^2(D_T)$.
        \end{enumerate}
    \end{enumerate}

	\paragraph{Case 1: $D\subset\R^d$ bounded}
    Let us fix $\eps>0$ and $\phi_0\in C^\infty_c(D)$ with $0\le \phi_0 \le 1$ in $D$. For every $n\in\mathbb N$ large enough, such that $\phi_0\in C^\infty_c(D^n)$, we consider the solution $\phi_{\eps,n}$ of the reversed heat system
    \begin{equation}\label{e:appendix-system-bounded}
    \begin{cases}
        \de_t \phi + (\eps e^{-|x|} + A_n) \Delta \phi =  \phi & \text{in}\quad D^n\times(0,T)\\
        \phi(x,T) = \phi_{0}(x) &\text{on} \quad D^n\\
        \phi \equiv0 & \text{on}\quad \de D^n \times (0,T),
    \end{cases}
    \end{equation}
    where $A_n(x,t)$ is defined as above. 
	We are interested in passing the family $\{\phi_{\eps,n}\}_{\eps>0,n\in\NN}$ to the limit; to do so, we now investigate uniform energy bounds on them.
	Firstly, for all     $\tau<T$, multiplying the equation in \eqref{e:appendix-system-bounded} by $\Delta \phi_{\eps,n}$ and integrating it in $D^n\times(\tau,T)$, we get
    \begin{align*}
    \frac12\|\nabla\phi_{\eps,n}\|_{L^2(D^n(\tau))}^2 
        + \int_{\tau}^T\int_{D^n} (\eps e^{-|x|} &+ A_n)|\Delta \phi_{\eps,n}|^2 \dif x \dif t \\
    &=
        \frac12\|\nabla \phi_{0}\|_{L^2(D^n(T))}^2 -  \int_{\tau}^T\int_{D^n} |\nabla \phi_{\eps,n}|^2 \dif x \dif t,
    \end{align*}
    and so
    \[
    \|\nabla\phi_{\eps,n}\|_{L^2(D^n(\tau))}^2 
        \le  C \|\nabla \phi_0\|_{L^2(D)}^2 \qquad\text{for all} \quad \tau \le T.
    \]
    Using the above estimate and the Poincaré inequality in time, and by testing \eqref{e:appendix-system-bounded} with $\partial_t\phi_{\eps,n}$, we deduce that there exists a constant $C = C(d,\phi_0,T)>0$, that does not depend on $\eps$ and $n$, such that
    \begin{equation}\label{e:comparison-energy}
         \|\phi_{\eps,n}\|_{L^2(D_T)}^2+\|\de_t \phi_{\eps,n}\|_{L^2(D_T)}^2 + \sup_{0\le\tau\le T} \| \nabla \phi_{\eps,n} \|_{L^2(D(\tau))}^2 \le C,
    \end{equation}
    and 
    \begin{equation}\label{e:comparison-laplacian}
        \iint_{D^n_T} \big(\eps e^{-|x|} + A_{n}\big) |\Delta \phi_{\eps,n}|^2 \dif x \dif t \le C.
    \end{equation}
    For all fixed $\eps>0$, the sequence is then weakly compact in $H^1(D_T)$ and in $H^2(D^{n_0}_T)$, for all fixed $n_0\in \NN$, and so as $n\to+\infty$ they converge to a certain $\phi_\eps \in H^1(D_T)$ (up to subsequences), and, for all $\eps>0$, the limit $\phi_\eps$ is such that
    \[
        0\le \phi_\eps \le 1 \quad\text{in}\quad D_T, \qquad \phi_\eps(x,0) = \phi_0(x) \quad \text{in} \quad D, \qquad\text{and}\qquad \phi_\eps \equiv 0 \quad \text{on}\quad \de_LD_T,
    \]
    and it satisfies 
    \begin{equation}\label{e:comparison-energy-2}
         \|\phi_{\eps}\|_{L^2(D_T)}^2+\|\de_t \phi_{\eps}\|_{L^2(D_T)}^2 + \sup_{0\le\tau\le T} \| \nabla \phi_{\eps} \|_{L^2(D(\tau))}^2 \le C,
    \end{equation}
    and 
    \begin{equation}\label{e:comparison-laplacian-2}
        \iint_{D^n_T} \big(\eps e^{-|x|} + A_{n}\big) |\Delta \phi_{\eps}|^2 \dif x \dif t \le C\quad\text{for all}\quad n\in\mathbb N.
    \end{equation}

    Moreover, for all $\eps>0$ and $n\in\NN$, substituting $\phi_{\eps,n}$ into \eqref{e:comparison-H2} we get  
    \begin{equation}\label{e:comparison-stefan-proof-n-eps}
        \int_{D^n(T)} V \phi_{0,n} \dif x  \ge \int_{D_T^n} V \big( \phi_{\eps,n} - \big(\eps e^{-|x|}+ (A_n - A)\big) \Delta \phi_{\eps,n}\big) \dif x \dif t - \int_{0}^{T} \int_{\de D^n} U \de_\nu \phi_{\eps,n} \dif \Hc^{d-1}\dif t,
    \end{equation}
    and thus, as $n\to+\infty$, the integral of $V(A_n-A)\Delta \phi_{\eps,n}$ vanishes since by \eqref{e:comparison-laplacian}, it holds
	\[
		\left\vert\iint_{D^{n}_T} (\mu_2-\mu_1)(A_n - A)\Delta \phi_{\eps,n}\dif x \dif t \right\vert \le C \|A_n - A\|_{L^1(D_T)}^{\sfrac12},
	\]	
	and
	\[
        \left\vert \iint_{D^{n}_T} U (A_n - A)\Delta \phi_{\eps,n}\dif x \dif t\right\vert 
        \le \left(\iint_{D^{n}_T} (A_n - A) U^2\dif x \dif t\right)^{\sfrac12}\left(\iint_{D^{n}_T} (A_n - A)|\Delta \phi_{\eps,n}|^2\dif x \dif t\right)^{\sfrac12},
  	\]
  	converges by Dominate Convergence Theorem, since $A_n\to A$ almost everywhere and by
  	$0\le A\le A_n \le 1$ and $U\in L^2(D_T)$ the first integral is bounded while by \eqref{e:comparison-laplacian} we bound the second one.
  	
    We next analyze the convergence of the boundary term as $n\to+\infty$.
    First, we notice that the (backward) maximum principle implies that, for all $\eps>0$ and $n\in\NN$ it holds
    \[
        0 \le \phi_{\eps,n}(x,t) \le 1 \qquad \text{for all}\qquad (x,t) \in D_n \times[0,T],
    \]
    and thus $0\le\phi_\eps\le 1$. Moreover, since $\de D_n$ is smooth and bounded, it holds
    \[
        \de_\nu \phi_{\eps,n}(x,t) \le0 \qquad\text{for almost every} \quad (x,t) \in (\de D_{n})\times(0,T).
    \]
    Therefore, for all $\eps>0$ and $n\in\NN$, and for all nonnegative $w\in C^\infty_c(\R^d\times\R)$ it follows that
    \begin{align*}
        0 \ge \int_{0}^T \int_{\de D^n} w\, \de_\nu \phi_{\eps,n} \dif \Hc^{d-1} \dif t 
        &= \iint_{D_T^n} \nabla w \cdot \nabla \phi_{\eps,n} + w\, \Delta \phi_{\eps,n} \dif x \dif t \\
        &= \iint_{D_T} \nabla w \cdot \nabla \phi_{\eps,n} +  \frac{w}{\eps e^{-|x|} + A_n} \big( \phi_{\eps,n} -  \de_t \phi_{\eps,n}   \big) \dif x \dif t.
    \end{align*}
    Since $\phi_{\eps,n}$ converge to $\phi_\eps$ weakly in $H^1(D_T)$ as $n\to+\infty$, for all fixed $\eps>0$ and $w\ge0$ of compact support, the right-hand term above also converges. 
    Thus, we proved that there exists a distribution $\sigma \in \mathcal D^*$ such that (up to subsequences) it holds
    \begin{equation}\label{e:comparison-measure-convergence}
        \de_\nu \phi_{\eps,n} (\dif \Hc^{d-1}\otimes\dif t)\llcorner (\de_L D^n_T) \xrightharpoonup[n\to+\infty]{} \sigma_\eps \qquad\text{in}\quad \mathcal D^*\quad \text{and in}\quad H^{-1}(B_R \times(0,T)),
    \end{equation}
    for all $R>0$ and $T>0$. Since $\sigma_\eps$ is negative distribution (by construction), it is represented by a negative Radon (capacitary) measure (that we still denote by $\sigma_\eps$) uniquely characterized  through the identity 
    \[
        \lim_{n\to=\infty}\int_{0}^T \int_{\de D^n} v(x,t)\, \de_\nu \phi_{\eps,n} \dif \Hc^{d-1} \dif t =\int_{\R^d\times(0,T)} v(x,t) \dif \sigma_\eps(x,t) ,
    \]
    for all $v \in H^1(D_T)$.
    Let now $\eta \in C^\infty_c(D\times\R)$. Then, there exists $n\in\NN$ such that $\eta \equiv0$ in $D_T \setminus D_T^n$, so
    \[
        \int_{D_T} \eta(x,t) \dif \sigma_\eps(x,t) = \lim_{n\to+\infty} \int_{0}^T\int_{\de D_n} \eta(x,t)\, \de_\nu \phi_{\eps,n}(x,t) \dif \mathcal H^{d-1}x\dif t = 0.
    \]
    Hence, $\sigma_\eps$ is supported on $\de_L D_T$ and (by density)
    \[
        \int_{D_T} \eta(x,t) \dif \sigma_\eps(x,t)=0 \qquad \text{for all}\quad \eta \in H^1_{0,L}(D_T).
    \]
    Therefore, since $U^- \in H^1_{0,P}(D_T) \subset H^1_{0,L}(D_T)$ and $\sigma_\eps\le 0$, we get that
    \[
        \int_{D_T} U \dif \sigma_\eps = \int_{D_T} (U^+ + U^-) \dif \sigma_\eps = \int_{D_T} U^+ \dif \sigma_\eps \le 0.
    \]
     Finally, thanks to \eqref{e:comparison-laplacian} and \eqref{e:comparison-laplacian-2}, we have that 
    \[
    \lim_{n\to+\infty}\iint_{D_T^n} V   \eps e^{-|x|} \Delta \phi_{\eps,n} \dif x \dif t = \iint_{D_T} V \eps e^{-|x|} \Delta \phi_{\eps} \dif x \dif t,
    \]
    and thus, we proved that, as $n\to+\infty$, \eqref{e:comparison-stefan-proof-n-eps} leads to the following integral inequality
    \begin{equation}\label{e:comparison-stefan-proof-eps}
    \begin{split}
        \int_{D(T)} V \phi_{0} \dif x  
        	\ge \iint_{D_T} V \big( \phi_{\eps} - \eps e^{-|x|} \Delta \phi_{\eps}\big) \dif x \dif t.
    \end{split}
    \end{equation}\medskip

    We now investigate the limit as $\eps\to0$.
    By \eqref{e:comparison-energy-2}, the sequence $\{\phi_\eps\}_{\eps>0}$ is weakly compact in $H^1(D_T)$. Thus, there exists $\phi \in H^1(D_T)$ with
    \[
        0\le \phi \le 1 \quad\text{in}\quad D_T, \qquad \phi(x,0) = \phi_0(x) \quad \text{in} \quad D, \qquad\text{and}\qquad \phi \equiv 0 \quad \text{on}\quad \de_LD_T,
    \]
     and such that $\phi_\eps\weak \phi$ weakly in $H^1(D_T)$ (up to subsequences).
	Moreover, arguing similarly as for $V(A_n-A)\Delta \phi_{\eps,n}$, we deduce that
	\[
		\left\vert\iint_{D_T} V\eps e^{-|x|}\Delta\phi_\eps\dif x \dif t\right\vert
		\le \eps^{\sfrac12}\big( \| U \|_{L^2(D_T)} + 1\big)C,
	\] 
	and so it vanishes as $\eps\to0$.
	Thus, \eqref{e:comparison-stefan-proof-eps} converges to
    \[
        \int_{D(T)} V \phi_0 \dif x 
        \ge  \iint_{D_T} V  \phi \dif x \dif t 
        \ge  \iint_{D_T} V^- \dif x \dif t,
    \]
    where the last inequality holds since $0\le \phi\le 1$ in $D_T$.

    Finally, the inequality is now independent from $\nabla \phi_0$; thus we can approximate $\phi_0 \to \ind_{\{V(\cdot,T)<0\}}$ in $L^2(D)$ and we deduce that
    \begin{equation}\label{e:appendix-gronwall-bounded}
    \int_{D(T)}(-V^-)\dif x \le    \iint_{D_T} (-V^-)\dif x \dif t,
    \end{equation}
    and we conclude by Gr\"onwall's Lemma, since $-V^-$ is positive, $V(\cdot,0)\ge0$ in $D$, and the integral on the right-hand side is finite. This proves the inequality 
    $V(x,T)\ge 0$ for almost every $x \in D$. Finally, using again that $u_1(x,T) > u_2(x,T)$ implies $\mu_1(x,T) \ge \mu_2(x,T)$, we get 
    \[
    	u_2(x,T) \ge u_1(x,T) \qquad\text{and}\qquad \mu_2(x,T) \ge \mu_1(x,T),
    \] 
    for almost every $x\in D$, which completes the proof.

	\paragraph{General case: $D\subset\R^d$ possibly unbounded}
    The proof in the case of unbounded $D$ relies on the strategy of the bounded case. On the other hand, when $D$ is unbounded, the set $\{V<0\}$ might have infinite Lebesgue measure, which does not allow to approximate $\ind_{\{V(\cdot,T)<0\}}$ with smooth functions in $L^2$, so we cannot write \eqref{e:appendix-gronwall-bounded}. In order to overcome this issue, in the unbounded case we use a different family of approximating problems.
	
    Let us fix $\eps>0$ and $\psi_0\in C^\infty_c(D)$. For all $n\in\NN$ large enough such that $\psi_0\in C^\infty_c(D^n)$, we consider the solution $\psi_{n,\eps}$ of the following reversed heat system.
    \begin{equation}\label{e:appendix-system-unbounded}
    \begin{cases}
        \de_t \psi + (\eps e^{-|x|} + A_n) \Delta \psi = B_\eps\, \psi & \text{in}\quad D^{n}\times(0,T)\\
        \psi(x,T) = \psi_{0}(x) &\text{on} \quad D^n\\
        \psi \equiv0 & \text{on}\quad \de D^n \times (0,T),
    \end{cases}
    \end{equation}
    where $A_n=A_n(x,t)$ is defined as in \eqref{e:appendix-system-bounded}, and 
    \[  
        B_\eps := \left|U^{-}_{\eps}\vee (-1)\right|^2, \qquad \big|  \nabla B_\eps \big|=  2\big|U^-_\eps\vee(-1)\big| \big| \nabla \big(U^-_\eps \vee(-1)\big) \big| \le 2 \big| \nabla U_\eps \big|,
    \]
    where $U_\eps$ is a family of smooth functions such that $U_\eps \to U$ strongly in $H^1(D_T)$, which we choose in such a way that 
    \begin{equation}\label{e:2-3-bound}
    \big\|  \nabla B_\eps \big\|_{L^2(D_T)} \le 2 \big\| \nabla U_\eps \big\|_{L^2(D_T)}\le 3 \big\| \nabla U \big\|_{L^2(D_T)}.
    \end{equation}
    As for \eqref{e:appendix-system-bounded}, testing the equation \eqref{e:appendix-system-unbounded} with $\Delta \psi_{\eps,n}$, we get
    \begin{equation}
    \begin{split}\label{e:comparison:energy-identity-test}
    \frac12\|\nabla\psi_{\eps,n}\|_{L^2(D^n(\tau))}^2 
        + \int_{\tau}^T\int_{D^n} (\eps e^{-|x|} + A_n)|\Delta \psi_{\eps,n}|^2 \dif x \dif t 
    &=
        \frac12\|\nabla \psi_{0}\|_{L^2(D^n)}^2 \\
        &\hspace{1cm}-  \int_{\tau}^T\int_{D^n} B_\eps\,|\nabla \psi_{\eps,n}|^2 \dif x \dif t\\
        &\hspace{1.5cm}- \int_{\tau}^T\int_{D^n} \psi_{\eps,n} \nabla B_\eps\cdot \nabla \psi_{\eps,n} \dif x \dif t.
    \end{split}
    \end{equation}

    Since $0\le \psi_0\le 1$ and $B_\eps\ge0$, the maximum principle for parabolic equations implies that $0\le \psi_{n,\eps}\le 1$, and thus from the previous identity and the bound \eqref{e:2-3-bound}, we recover the following estimate
    \[
       \|\nabla\psi_{\eps,n}\|_{L^2(D^n(\tau))}^2 \le \|\nabla\psi_{0}\|_{L^2(D)}^2 + C \|\nabla U\|_{L^2(D_T)} \cdot \left(\int_\tau^T \|\nabla \psi_{\eps,n}\|_{L^2(D^n(t))}^2 \dif t\right)^{\sfrac12}.
    \]
    Let then
    \[
        \tau_{0,\eps,n} := \inf \left\{ \tau \in[0,T] : \int_{\tau}^T \int_{D}|\nabla \psi_{\eps,n}|^2 \dif x \dif t \le 1\right\},
    \]
    For all $\tau\in[ \tau_{0,\eps,n},T]$, it holds
    \[
        \|\nabla\psi_{\eps,n}\|_{L^2(D^n(\tau))}^2 \le \|\nabla\psi_{0}\|_{L^2(D)}^2 + C\|\nabla U\|_{L^2(D_T)},
    \]
    conversely, for $\tau\le \tau_{0,\eps,n}$,
    \[
       \|\nabla\psi_{\eps,n}\|_{L^2(D^n(\tau))}^2 
       \le \|\nabla\psi_{0}\|_{L^2(D)}^2 + \|\nabla U\|_{L^2(D_T)} \cdot \int_\tau^T \|\nabla \psi_{\eps,n}\|_{L^2(D^n(t))}^2 \dif t.
    \]
    Therefore, Gr\"onwall's Lemma applies, and with the same elementary manipulations as in the bounded case, there exists a constant ${C = C(d,T,U)>0}$ such that \eqref{e:comparison-energy} and \eqref{e:comparison-laplacian} hold for $\psi_{\eps,n}$.
    Therefore, as in the bounded case, $\{\psi_{\eps,n}\}$ is relatively compact in $H^1(D_T)$ and in $H^2(D^{n_0}_T)$, for all fixed $n_0\in \NN$.
    Thus, as $n\to+\infty$ they converge to a certain $\psi_\eps \in H^1(D_T)$ (up to subsequences), and, for all $\eps>0$, $\psi_\eps$ satisfies the same \eqref{e:comparison-energy-2} and \eqref{e:comparison-laplacian-2}.

    Let finally introduce the radial nonnegative family of functions $\{\rho_R\}_{R>0}$, where $\rho_R(x) := \tilde\rho(\sfrac {|x|}R)$ and function $\tilde\rho \in C^\infty(\R)$
    such that
    \[
    	\tilde\rho(\zeta) = 1 \qquad\text{if}\quad \zeta \le \frac12, \qquad
        \tilde\rho(\zeta) = 0 \qquad\text{if}\quad \zeta \ge 1,
    		\qquad\text{and}\qquad
    	0\le\tilde\rho'(\zeta)\le 4 \qquad\text{for all}\quad \zeta\in\R.
    \]
    Then, for all $\eps>0$, $n\in\NN$, and $R>0$, we test \eqref{e:comparison-H1} with $\rho_R \psi_{n,\eps}$ in $D_T$, so that, similarly to the bounded case we get
    \begin{align*}
		\int_{D^n(T)} V\rho_R\,\psi_{\eps,n} \dif x  
		&\ge \iint_{D^n_T}
		\Big(
			\rho_R\,V \,\de_t\psi_{\eps,n} 
				- \rho_{R}\,(\nabla U \cdot\nabla \psi_{\eps,n}) 
				- \psi_{\eps,n}(\nabla U \cdot \nabla \rho_R)
		\Big) \dif x \dif t\\
		&= \iint_{D^n_T}
			\rho_R\,V \Big(B_\eps\psi_{\eps,n} - \big(\eps e^{-|x|} +(A_n - A)\big)\Delta \psi_{\eps,n}\Big)  \dif x \dif t \\
				&\hspace{1.5cm}+ \iint_{D^n_T}\Big( U (\nabla \rho_R\cdot \nabla \psi_{\eps,n})   
				- \psi_{\eps,n}(\nabla U \cdot \nabla \rho_R) \Big)\dif x \dif t\\
				&\hspace{3cm}- \int_0^T \int_{\de D^n} U \rho_R |\nabla \psi_{\eps,n}|\dif \Hc^{d-1}\dif t.
    \end{align*}
        
    In particular, since they hold the same estimate of the bounded case, we again observe that
    \[
    \lim_{n\to+\infty}\iint_{D^n_T} V\, (A_n-A) \, \Delta \psi_{\eps,n} \dif x \dif t =0,
    \]
    and for all $\eps>0$ there exists a negative (capacitary) Radon measure $\sigma_\eps\le0$ supported on $\de_L D_T$ such that, for all $R_0>0$, it holds
    \[
    	|\nabla \psi_{\eps,n}| (\dif \Hc^{d-1}\otimes\dif t) \llcorner (\de_L D^n_T) \xrightharpoonup[n\to+\infty]{}\sigma_\eps \qquad\text{in}\quad \mathcal D^*\quad \text{and in}\quad H^{-1}(B_{R_0} \times(0,T)).
    \]
    As in the bounded case, since $U\rho_R$ is of compact support and $U \rho_R \in H^1_{0,L}(D_T)$, we get that
    \[
        \int_{D_T} U\rho_R \dif \sigma_\eps  = \int_{D_T} U^+\rho_R \dif \sigma_\eps \le 0,
    \]
    and therefore, for $n\to+\infty$, we get the following integral inequality
    \begin{equation}\begin{split}\label{e:comparison:inequality-eps-unbounded}
		\int_{D(T)} V\rho_R\,\psi_{\eps} \dif x  
		&\ge  \iint_{D_T}
			\rho_R\,V \Big(B_\eps\psi_\eps - \eps e^{-|x|} \,\Delta \psi_\eps\Big)  \dif x \dif t \\
				&\hspace{1.5cm}+ \iint_{D_T} U (\nabla \rho_R\cdot \nabla \psi_{\eps})  \\ 
				&\hspace{3cm}- \iint_{D_T}
				\psi_{\eps}(\nabla U \cdot \nabla \rho_R) \dif x \dif t.
    \end{split}\end{equation}  
    Again, since $\{\psi_\eps\}_{\eps>0}$ is uniformly bounded in $H^1(D_T)$, there exists $\psi\in H^1(D_T)$ with $0\le \psi\le 1$ and $\psi\equiv0$ on $\de_L D_T$ such that $\psi_\eps \weak \psi$ weakly in $H^1(D_T)$.
    Through the same argument as in the bounded case, we get that
    \[
        \left\vert\iint_{D_T} V \eps e^{-|x|} \Delta \psi_\eps \dif x \dif t \right\vert \le \eps^{\sfrac12}(\|U\|_{L^2(D_T)} + 1)C,
    \]
    and since $U_\eps \to U$ strongly in $H^1(D_T)$, from \eqref{e:comparison:inequality-eps-unbounded} we recover the following integral inequality
    \[
		\int_{D(T)} V\rho_R\,\psi_0 \dif x  
		\ge  \iint_{D_T}
			\rho_R\,V \big|U^-\vee(-1)\big|^2\psi  \dif x \dif t 
				+ \iint_{D_T} U (\nabla \rho_R\cdot \nabla \psi)  
				- \psi(\nabla U \cdot \nabla \rho_R) \dif x \dif t,
    \] 
    Finally, since
    \[
    	\rho_R \xrightarrow[R\to+\infty]{} 1 
    		\qquad\text{and}\qquad
    	\nabla \rho_R \xrightarrow[R\to+\infty]{} 0 \qquad\text{pointwise in}\quad \R^d,
    \]
    and
    \[
        \big |V \big| \big| U^- \vee(-1) \big|^2 
            \le \big( \big|U^-\big| +2) \big| U^- \vee(-1) \big|^2 
            \le 3 \big|U^-\big|^2
    \]
    Dominate Convergence Theorem applies and we recover the following integral inequality
    \begin{equation}
		\int_{D(T)} V \,\psi_0 \dif x  
		\ge  \iint_{D_T} V \big|U^-\vee(-1)\big|^2\psi  \dif x \dif t 
		\ge  \iint_{D_T} V^- \big|U^-\vee(-1)\big|^2  \dif x \dif t 
    \end{equation}
    We notice that the integrand on the right-hand side is non positive. In order to prove that the right-hand side is in $L^1(D_T)$ we first notice that when $U_-\neq 0$ we have that $u_2<u_1$ and $\mu_2\le \mu_1$, and so 
    \[
    V=V^-=(U+(\mu_2-\mu_1))^-=U^--(\mu_1-\mu_2)^+.
    \]
    Thus, we have 
    \begin{align*}
    	V\big|U^-\vee(-1)\big|^2  
    		&= V^-\big|U^-\vee(-1)\big|^2\\
    		&= \big(U+ (\mu_2-\mu_1)\big)^-\big(U^-\ind_{\{-1< U <0\}} -\ind_{\{U<-1\}}\big)^2\\
            &= \big(U+ (\mu_2-\mu_1)\big)^-\big((U^-)^2\ind_{\{-1< U <0\}} +\ind_{\{U<-1\}}\big)\\
    		&= -\big|U^-\big|^3\ind_{\{-1< U <0\}} 
    			- (\mu_1-\mu_2)\big|U^-\big|^2\ind_{\{-1< U <0\}}
    			-\big|U^-\big| \ind_{\{U<-1\}}
    			-(\mu_1-\mu_2) \ind_{\{U<-1\}}\\
    		&=-\big( W_1 + W_2 + W_3 + W_4\big),
    \end{align*}    
    where $W_1,\dots,W_4$ denote the four terms in the expression above.
    For all $i=1,\dots,4$, $W_i\ge0$ in $D_T$ and $W_i\in L^1(D_T)$ since:
    \begin{itemize}
    \setlength\itemsep{0.2cm}
    	\item for $W_1 = |U^-|^3 \le |U^-|^2$ (in $|U^-|<1$) and $U\in L^2(D_T)$;
    	\item for  $W_2 \le C |U|^2$, and $U\in L^2(D_T)$;
        \item  $\|W_3\|_{L^1(D_T)} \le \|U\|_{L^2(D_T)} |\{U<-1\} \cap D_T|^{\sfrac12}$, that is finite since $U \in L^2(D_T)$;
    	\item $\|W_4\|_{L^1(D_T)}\le 2|\{U<-1\}\cap D_T|<+\infty$, since $U\in L^2(D_T)$. 
    \end{itemize}
    
    Therefore, taking $\psi_0\to \big|U^-\vee(-1)\big|^2$ in $L^1(D)$, we get
    \[
    \int_{D(T)} (W_1 + W_2 + W_3 + W_4) \dif x 
    	\le \int_{D_T} (W_1 + W_2 + W_3 + W_4) \dif x \dif t,
    \]
    that implies, by Gr\"onwall's Lemma that
    \[
        (W_1 + \cdots W_4)(\cdot,T) \equiv 0 \qquad \text{almost everywhere in}\quad D.
    \]
    Specifically, since $W_i\ge0$ for all $i=1,\dots,4$, it implies that $W_i(\cdot,T)\equiv0$ almost everywhere in $D$, and therefore that
    \begin{equation}\label{e:comparison-temperatures-ordered-unbounded}
    	U(x,T) = u_2(x,T)-u_1(x,T) \ge 0 \qquad\text{for almost every}\quad x\in D.
    \end{equation}
    This also implies that $\mu_2\ge \mu_1$ on the set $\{u_2>u_1\}$; still, this does not conclude the proof, since, if $U=0$, we cannot directly conclude that 
    \[
    	\mu_2(x,T)-\mu_1(x,T) \ge0 \qquad\text{for almost every}\quad x\in D.
    \]
    However, for all $R>0$, $u_1$ and $u_2$ are respectively a subsolution and a supersolution of the Stefan problem in $D \cap B_R$.
    Moreover, by \eqref{e:comparison-temperatures-ordered-unbounded}, $u_2 \ge u_1$ almost everywhere in $D_T$, and the same holds for their traces on $\de_L (C_R \cap D_T)$.
    Hence 
    \[
    	u_1 \le u_2 \qquad\text{on}\quad \de_P (D_T\cap C_R) \qquad \text{and}\qquad \mu_1(x,0) \le \mu_2(x,0) \qquad\text{in}\quad D \cap B_R,
    \]
    and so the bounded case applies and we can conclude that 
    \[
    	\mu_1(x,T) \le \mu(x,T) \qquad \text{for almost every}\quad x\in D,
    \]	
    and so we conclude the proof also in the case of unbounded sets $D\subset\R^d$. This concludes the proof of \Cref{t:comparison-dirichlet}.
\end{proof}

\begin{proof}[\bf Proof of \Cref{t:comparison-neumann}]
The proof of \Cref{t:comparison-neumann} follows by the same steps of \Cref{t:comparison-dirichlet}. We define the family of approximating domains as $D^n:=D\cap B_n$, where $B_n$ is the ball of radius $n$ in $\R^d$, and the family of test functions $\{\psi_{\eps,n}\}$, defined as the solutions of following mixed  problem
    \[
        \begin{cases}
            \de_t \psi + (\eps e^{-|x|} + A_n) \Delta\psi = B_\eps \psi &\text{in}\quad D\times(0,T)\\
            \psi(x,T) = \psi_0 & \text{on} \quad D \\
            \de_\nu \psi = 0& \text{on}\quad (\de D \cap B_n) \times (0,T),\\
            \psi = 0 & \text{on} \quad (D\cap \partial B_n) \times(0,T),
        \end{cases}
    \]
    where $A_n$ and $B_\eps$ are defined as above. Since $\de D$ is sufficiently regular, these are $H^2(D \cap B_R)$ and the conclusion follows by the same argument as in the proof of \Cref{t:comparison-dirichlet}.
\end{proof}
    
\begin{remark}    
    In the Neumann case, we require $D$ to have smooth boundary since, in the Neumann framework we cannot approximate the set $D$ with a family of regular ones by maintaining the boundary conditions and so, since we need $H^2$-estimates through the proof, we cannot drop the regularity in the limit in this case.
\end{remark}


\subsection{Further consequences of the uniqueness of the weak solutions}\label{sec:further-consequences}

Once we have established the existence (\Cref{t:main}) and the uniqueness (\Cref{cor:every-weak-solution-is-the-limit-of-u-eps}) of the enthalpy solution and the convergence of the regularized sequences $u_\eps$ are assured, we obtain two immediate consequences, that we summarize in the next corollaries. 

\begin{corollary}\label{cor:uniqueness-h}
	Let $D\subset\R^d$ be an open set, $u_M : D \to \R$ be a measurable function, $F\in L^2(D;\R^d)$, $g\in H^1(D)$, and $h\in L^\infty(D)$ with $|h(x)|\le 1$.
    Let $\{u_\eps\}_{\eps>0}$ be a family of minimizers for \eqref{e:efunctional} in $\Uc_{\mathcal D}(D,g)$ or in $\Uc_{\mathcal N}(D,g)$.
	Then the limit $(u,\mu)$ does not depend on the value of $h$ in $\Omega_g^+ \cup \Omega_g^-$.
\end{corollary}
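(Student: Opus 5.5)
The plan is to reduce the statement to the uniqueness part of \Cref{cor:every-weak-solution-is-the-limit-of-u-eps}, i.e.\ to the comparison principles \Cref{t:comparison-dirichlet} and \Cref{t:comparison-neumann}. Fix $D$, $u_M$, $F$, $g$, and take two admissible initial enthalpies $h_1,h_2\in L^\infty(D)$, $|h_i|\le 1$, with $h_1=h_2$ almost everywhere on $D\setminus(\Omega_g^+\cup\Omega_g^-)$. Let $p^{(i)}_\eps$ be the two-sided Heaviside approximations centered in $(u_M,h_i)$, and let $u^{(i)}_\eps$ be the corresponding minimizers. By \Cref{t:main}, along a subsequence each family converges to an enthalpy solution $(u^{(i)},\mu^{(i)})$ in the same class ($\Uc_{\mathcal D}(D,g)$ or $\Uc_{\mathcal N}(D,g)$). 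These solutions have the same melting temperature $u_M$, the same heat source $F$, the same initial temperature $g$, and, by item \ref{item:main_thm:initial-mushy-coefficient}, the initial mushy coefficient
\[
\mu^{(i)}(x,0)=\ind_{\Omega_g^+}(x)-\ind_{\Omega_g^-}(x)+h_i(x)\ind_{D\setminus(\Omega_g^+\cup\Omega_g^-)}(x).
\]
The values of $h_i$ on $\Omega_g^\pm$ therefore do not enter, so $\mu^{(1)}(\cdot,0)=\mu^{(2)}(\cdot,0)$ almost everywhere.

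Next I apply the comparison principle twice, once with $(u^{(1)},\mu^{(1)})$ as subsolution and $(u^{(2)},\mu^{(2)})$ as supersolution, and once with the roles exchanged. Both are solutions, so both inequalities \eqref{e:subsolution-enthalpy} hold. In the Dirichlet case the condition $(u^{(2)}-u^{(1)})^-\in H^1_{0,P}(D_T)$ holds because both temperatures lie in $\Uc_{\mathcal D}(D,g)$, so their difference lies in $H^1_{0,P}(D_t)$; the negative part of such a function is again in $H^1_{0,P}$. In the Neumann case the initial traces coincide and equal $g$. Together with the equality of the initial mushy coefficients, this gives $u^{(1)}=u^{(2)}$ and $\mu^{(1)}=\mu^{(2)}$ almost everywhere at every time $T>0$. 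Equality at $T=0$ is already known.

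Uniqueness of the limit then shows that the whole families converge and not just subsequences: every subsequence has a further subsequence converging to the unique solution, exactly as in \Cref{cor:every-weak-solution-is-the-limit-of-u-eps}. Hence the limit $(u,\mu)$ depends on $h$ only through $h\ind_{D\setminus(\Omega_g^+\cup\Omega_g^-)}$.

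The main obstacle is the Neumann case. \Cref{t:comparison-neumann} requires $\partial D$ to be locally $C^{1,1}$, while the corollary is stated for a general open set. I would appeal to \Cref{cor:every-weak-solution-is-the-limit-of-u-eps} as stated and take its regularity hypotheses as given. Failing that, I would look for a direct argument comparing the minimizers $u^{(1)}_\eps$ and $u^{(2)}_\eps$: the two approximations $p^{(i)}_\eps(g(x),x)$ already agree wherever $|g-u_M|>2\eps$. This is less clean, so the comparison route is the one I would pursue first.
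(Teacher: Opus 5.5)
Your proposal is correct and follows the paper's route. The paper's proof is a one-line appeal to item \ref{item:main_thm:initial-mushy-coefficient} of \Cref{t:main}, combined implicitly with the uniqueness in \Cref{cor:every-weak-solution-is-the-limit-of-u-eps}, and your argument spells out that uniqueness via the two comparison principles. The caveat you raise, that \Cref{t:comparison-neumann} requires a $C^{1,1}$ boundary while the Neumann statement allows a general open set, is a mismatch already present in the paper's own uniqueness corollary, not a gap specific to your argument.
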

\begin{proof}
The proof follows immediately from \Cref{t:main}, claim \ref{item:main_thm:initial-mushy-coefficient}.	
\end{proof}

\begin{corollary}[Boundedness of the solutions in the homogeneous case $F\equiv 0$]\label{cor:boundeness-solutions}
    Let $D\subset\R^d$ be an open set, $u_M : D\to \R$ measurable, $g\in H^1(D)$, and $h\in L^\infty(D)$, with $|h(x)|\le1$. Let $(u,\mu)$ be an enthalpy solution of the Stefan problem with Dirichlet or Neumann boundary conditions, melting temperature $u_M$ and heat source $F \equiv0$, initial temperature $g$, and initial mushy coefficient $h$.
    If $g\in L^\infty(D)$, then $u \in L^\infty(D_\infty)$ and 
    $
        \| u\|_{L^\infty(D_\infty)} \le  \|g \|_{L^\infty(D)}.
    $
\end{corollary}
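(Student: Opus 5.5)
The plan is to compare $(u,\mu)$ with constant solutions through the comparison principles, \Cref{t:comparison-dirichlet} in the Dirichlet case and \Cref{t:comparison-neumann} in the Neumann case. Put $M:=\|g\|_{L^\infty(D)}$ and consider the constant pairs
\[
(u^+,\mu^+):=(M,\,1)\qquad\text{and}\qquad (u^-,\mu^-):=(-M,\,-1).
\]

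\textbf{Step 1: the constant pairs are solutions.} With $F\equiv0$, any constant temperature together with a constant mushy coefficient satisfies \eqref{e:weak-equation-F} as an identity. Indeed, $\nabla u^\pm=0$, and $\int_D c\,\eta\,dx\big|_{t_1}^{t_2}=\int_{t_1}^{t_2}\int_D c\,\partial_t\eta$ for every constant $c$. So $(M,1)$ is both a sub- and a supersolution, and so is $(-M,-1)$. Regularity (i) holds in $L^2_{loc}$ only when $|D|<\infty$. For unbounded $D$ I would check in the proof of \Cref{t:comparison-dirichlet} that only $U=u_2-u_1\in L^2$ and $\nabla U\in L^2$ are actually used, and $U=M-u$ has these properties. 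Weak-$\ast$ continuity (ii) is trivial for constant $\mu$.

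\textbf{Step 2: compatibility.} Condition (iii) requires $\mu^+=1$ wherever $M>u_M(x)$, which holds. It also requires $\mu^+=-1$ wherever $M<u_M(x)$, which fails on that set. This is the main obstacle: a general measurable $u_M$ can exceed $M$. I would first inspect the proof of the comparison principle. It uses compatibility only to get $0\le A\le1$, that is, the implication that $u_2>u_1$ forces $\mu_2\ge\mu_1$. With $\mu_2\equiv1\ge\mu_1$ this implication holds for all $\mu_1$, whatever $u_M$ is. So for the upper bound I would apply the comparison argument with $(u_1,\mu_1)=(u,\mu)$ and $(u_2,\mu_2)=(M,1)$, and for the lower bound with $(u_1,\mu_1)=(-M,-1)$ and $(u_2,\mu_2)=(u,\mu)$. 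In both cases the one-sided compatibility needed comes from $|\mu|\le1$. If instead one insists on applying the theorem verbatim, I would replace $u_M$ by the truncation $\tilde u_M:=(u_M\wedge M)\vee(-M)$. This is legitimate because the constant pairs are compatible with $\tilde u_M$, and $(u,\mu)$ remains compatible on the set $\{|u|<M\}$, where the two melting temperatures agree up to where it matters. I would favour the first route, since it avoids case checking.

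\textbf{Step 3: initial and boundary data.} At $t=0$ we have $u(\cdot,0)=g\le M$ and $\mu(\cdot,0)\le1$, and symmetrically for the lower comparison. In the Dirichlet case I need $(M-u)^-\in H^1_{0,P}(D_{T^*})$. This holds because $u-g\in H^1_{0,P}$ and $(M-g)^-=0$: truncation $w\mapsto (w)^-$ preserves $H^1_{0,P}$ of $(M-g)-(u-g)$, after comparing with the nonnegative function $M-g$. This would be justified by the standard lattice property of $H^1_0$-type spaces. In the Neumann case no boundary condition is needed. The comparison principle then gives $-M\le u(\cdot,T)\le M$ for every $T$, which is exactly $\|u\|_{L^\infty(D_\infty)}\le\|g\|_{L^\infty(D)}$.
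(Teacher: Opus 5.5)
\textbf{The compatibility workaround in Step 2 fails.} You read the comparison proof as using only the implication $u_2>u_1\Rightarrow \mu_2\ge\mu_1$. In fact $0\le A\le 1$ needs \emph{both} implications. The paper also uses $u_1>u_2\Rightarrow\mu_1\ge\mu_2$, and uses it again at the end to pass from $V(\cdot,T)\ge 0$ to $U(\cdot,T)\ge0$; in the unbounded case it is what allows writing $V=U^--(\mu_1-\mu_2)^+$ on $\{U<0\}$.

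With $(u_1,\mu_1)=(u,\mu)$ and $(u_2,\mu_2)=(M,1)$, the second implication says $\mu=1$ a.e.\ on $\{u>M\}$. This fails on $\{M<u<u_M\}$, where compatibility of $(u,\mu)$ forces $\mu=-1$. You cannot assume this set is negligible, since that is exactly what you are proving. On that set $A=(M-u)/(M-u+2)\notin[0,1]$ (it is negative where $M<u<M+2$), so \eqref{e:appendix-system-bounded} is no longer a backward parabolic problem. Even granting $V(\cdot,T)\ge0$, you would only get $u\le M+1-\mu$, i.e.\ $u\le M+2$ where $\mu=-1$.

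Truncating the melting temperature fails for the same reason. $(u,\mu)$ is not compatible with $\tilde u_M$ on $\{M<u<u_M\}$, and compatibility on $\{|u|<M\}$ does not help, because the comparison argument uses it on all of $D_T$.

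The repair is to give the constant temperature a space-dependent, time-independent mushy coefficient, $\mu^M:=\ind_{\{u_M\le M\}}-\ind_{\{u_M>M\}}$. Since $M+\mu^M$ does not depend on $t$ and $\nabla M=0$, the pair $(M,\mu^M)$ satisfies \eqref{e:weak-equation-F} with $F\equiv0$ and is compatible with $u_M$. Moreover $\mu(\cdot,0)\le\mu^M$, because on $\{u_M>M\}$ one has $g\le M<u_M$, hence $\mu(\cdot,0)=-1$. Symmetrically, use $\big(-M,\ind_{\{u_M<-M\}}-\ind_{\{u_M\ge-M\}}\big)$ from below.

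\textbf{Domains of infinite measure.} Your claim that $U=M-u$ belongs to $L^2(D_T)$ is false when $|D|=\infty$ and $M>0$. Since $u\in L^2(D_T)$, we get $M-u\notin L^2(D_T)$; only $U^-$ is square integrable. The unbounded case of the proof of \Cref{t:comparison-dirichlet} does use $U\in L^2$ when letting $R\to\infty$. There the cut-off term $\iint U\,\nabla\rho_R\cdot\nabla\psi$ must vanish, and for its constant part the natural bound is only $CM\,T^{1/2}R^{(d-2)/2}\|\nabla\psi\|_{L^2(D_T)}$, which does not go to zero for $d\ge2$. So even with the corrected barriers, your argument covers only $|D|<\infty$ (with $C^{1,1}$ boundary in the Neumann case, as \Cref{t:comparison-neumann} requires). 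The infinite-measure case needs a further idea.

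The paper sidesteps both issues by working on the scheme. For $F\equiv0$ and $K>\|g\|_{L^\infty(D)}$, the truncation $\tilde u_\eps=(u_\eps\wedge K)\vee(-K)$ is admissible and does not increase any term of \eqref{e:efunctional}. This is because $\de_t\big(p_\eps(\tilde u_\eps,x)\big)=\de_zp_\eps(\tilde u_\eps,x)\,\de_t\tilde u_\eps$ vanishes where the truncation is active. Hence $|u_\eps|\le\|g\|_{L^\infty(D)}$, the bound survives the weak limit, and \Cref{cor:every-weak-solution-is-the-limit-of-u-eps} transfers it to every enthalpy solution. This route is blind to $u_M$ and to $|D|$, and comparison is only ever applied to two finite-energy solutions.
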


\begin{proof}
We prove the statement in the Dirichlet case $u\in\mathcal U_{\mathcal D}(D,g)$, the Neumann one being analogous.
Fix $\eps>0$. We prove that if $u_\eps$ is a minimizer of \eqref{e:efunctional} in $\Uc_{\mathcal D}(D,g)$, then  $\| u_\eps\|_{L^\infty(D_\infty)} \le \| g\|_{L^\infty(D)}$.
Assume by contradiction that there exists $K>0$ such that
    \[
        |\{|u_\eps|>K\}|>0 \qquad \text{and}\qquad \|g\|_{L^\infty(D)} < K.
    \]
    By construction, 
    \[
    \tilde{u}_\eps(x,t) = (u_\eps \wedge K) \vee (-K) \in \Uc_{\mathcal D}(D,g),
    \]
    but since $(|\de_t u_\eps|+ |\nabla u_\eps|) \not\equiv0$ in $\{|u_\eps|>K\}$, it follows that
    \[
    \Fc_\eps(\tilde{u}_\eps) = \iint_{\{u|\le K\}}\frac{e^{\sfrac{-t}{\eps^4}}}{\eps^4}\Bigg\{\eps^4\Bigg[1+\frac{1}{\eps}f_\eps(u_\eps)^2\Bigg]|\de_t u_\eps|^2 + |\nabla u_\eps|^2 \Bigg\}  <   \Fc_\eps({u}_\eps).
    \]
    This contradicts the minimality of $u_\eps$ in $\Uc(D,g)$, which concludes the proof.
\end{proof}

\appendix 
\crefalias{section}{appendix}
\crefalias{subsection}{appendix}

\section{Classical and weak solutions of the Stefan problem}\label{sec:history}

This section is dedicated to the classical counterpart of \Cref{def:enthalpy_solution}. 
We also briefly discuss the history of the Stefan problem in its classical and weak formulations.
For more details about the Stefan problem, its history and physical motivation we refer to \cite{Ladyzhenskaya_Solonnikov_Uraltseva68:Parabolic_equation_BOOK,rubinstein71:Stefan,Meirmanov_1992,Visintin:BookPhaseTransition,Andreucci2004:StefanNotes,CaffarelliSalsa:GeomApproachToFreeBoundary}.

\subsection{The Stefan problem - classical formulations}\label{sub:history-classical-formulation}
The Stefan problem is a free boundary problem that models the liquid-solid phase transition. 
The problem was originally introduced by Stefan \cite{Stefan1889:AcidBaseDiffusion,Stefan1889:HeatConduction,Stefan1890:Diffusion,Stefan1891:IceFormation} and by Lamé and Clapeyron \cite{LameClapeyron1831}. In its original formulation, the space is divided into two regions separated by a smooth interface; in the region occupied by the material in a solid state the temperature is strictly negative, while in the region where the material is in liquid state the temperature is strictly positive.
In both regions, the temperature satisfies the heat equation, while the interface evolves with normal velocity proportional to the heat flux.\medskip 

The space region is a fixed open set $D\subset \R^d$ and the temperature is represented by a continuous function $u:D\times[0,+\infty)\to\R$. As in the rest of the paper, we set $\Omega_u^\pm$ to be space-time domains $\{\pm u>0\}\subset D\times [0,+\infty)$. In the classical formulation(s) (see \eqref{e:classic-stefan-tp1} and \eqref{e:classical-Stefan-op} and the discussion below) the space-time free boundaries $\Gamma^\pm:=\partial\Omega_u^\pm\cap \big(D\times(0,+\infty)\big)$ are assumed to be $C^1$ smooth $d$-dimensional surfaces that cross all the horizontal planes $D\times\{t\}$ transversally. For all $t\ge 0$ we set 
$$\Gamma^\pm(t):=\Gamma^\pm\cap(D\times\{t\})\quad\text{and}\quad\Sigma^\pm(t):=\{x\in D\,:\,(x,t)\in\Gamma^\pm(t)\}.$$
The space-time normal vector to $\Gamma^\pm$ (pointing outwards $\Omega_u^+$ and inwards $\Omega_u^-$) at $(x,t)\in \Gamma^\pm$ is given by 
$$\widetilde\nu^\pm=\widetilde\nu^\pm(x,t)=\big(\widetilde\nu_x^\pm(x,t),\widetilde\nu_t^\pm(x,t)\big)\in\R^d\times\R,$$
while by $\nu^\pm\in\R^d$ we denote the normal vector (in space) to $\Sigma^\pm(t)$ at $x\in \Sigma^\pm(t)$, that is, 
$$\nu^\pm=\nu^\pm(x)=\frac{\widetilde\nu_x^\pm(x,t)}{|\widetilde\nu_x^\pm(x,t)|}.$$
The normal velocity of $\Sigma^\pm(t)$ at $x$ is then given by
\begin{equation}\label{e:normal-velocity}
    V_\nu^\pm(x,t):=-\frac{\widetilde\nu_t^\pm(x,t)}{|\widetilde\nu_x^\pm(x,t)|}, 
\end{equation}
where the last equality follows since $\Gamma^\pm$ are the level sets of $u_\pm$. Furthermore, the temperature $u : D\times[0,+\infty) \longrightarrow \R$ is locally Lipschitz in $D\times[0,+\infty)$ and they hold
    \begin{equation}\label{e:regularity-assumptions-u-classical-solutions}
        u^+ \in C^{1,1}_{x,t}\big(\overline{\Omega_u^+}\big) \cap C^2_x \big( \Omega_u^+\big) 
            \qquad\text{and}\qquad
        u^- \in C^{1,1}_{x,t}\big(\overline{\Omega_u^-}\big) \cap C^2_x \big( \Omega_u^-\big).
    \end{equation}
\paragraph{Classical two-phase solutions} In the classical formulation of Stefan, that the two free boundaries are fully collapsed ($\Gamma^+(t)=\Gamma^-(t)$ for all $t\ge0$), while the evolution of the temperature 
 ${u:D\times[0,+\infty)\to\R}$ 
 is governed by the system 
\begin{equation}\label{e:classic-stefan-tp1}\tag{ST-tp}
\begin{cases}
    \begin{array}{ll}
        \de_t u_+ = \Delta u_+ & \text{in}\quad \Omega^+_u,\\
        \de_t u_- = \Delta u_- & \text{in}\quad \Omega^-_u,\\
        \hspace{0,37cm}V_\nu^\pm =\frac{1}{2L} \big( |\nabla u_+|- |\nabla u_-| \big) &\text{on}\quad \Gamma^\pm(t)\quad\text{for all}\quad t>0,
    \end{array}
\end{cases}
\end{equation}
where the constant $L>0$ represents the {latent heat} of the material.

\paragraph{Classical one-phase solutions.}
In the limit case, in which the ice is always at thermal equilibrium  $u_-\equiv 0$, we obtain the \emph{one-phase Stefan problem}
in which the heat diffusion takes place only in the water region $\Omega_u^+$.
Thus, under the smoothness assumptions above, \eqref{e:classic-stefan-tp1} reads as
\begin{equation}\label{e:classical-Stefan-op}\tag{ST-op}
    \begin{cases}
        \de_t u_+ = \Delta u_+ & \text{in}\quad \Omega_u^+,\\
        \hspace{0,37cm}V_\nu^+ = \frac{1}{2L}|\nabla u_+| & \text{on}\quad \Gamma^+(t)\quad\text{for all}\quad t>0.
    \end{cases}
\end{equation}
In this case, the positivity set is expanding:
\[
    \Omega_{u}^+(t_1) \subset \Omega_u^+(t_2) \quad\text{for all times}\quad 0 \le t_1 \le t_2.
\]

\begin{remark}
    Let $u$ be a solution of \eqref{e:classic-stefan-tp1} or \eqref{e:classical-Stefan-op} and let  $v = \frac1L u$. Then $v$ is a solution of the Stefan problem with $L=1$, so we can set $L=1$. 
\end{remark}

    \begin{figure}[!ht]
    	\centering
	\begin{tikzpicture}[scale=0.9, use Hobby shortcut]
		\begin{scope}[shift={(-3,0)}]
			\clip (0,0) circle (2);
			\draw[fill = black!10] (0,0) circle (2.5);
			\draw[thick, fill = MidnightBlue!20] (-1,2) .. (0,1) .. (-0.5,0) .. (0,0) .. (1,-1) .. (0.5,-2) .. (-2,-2) .. (-2,2);
			\draw[dashed, thick] (0,0) circle (1.99);
			
			\draw[->,thick] (0,0) -- (-0.04,0.3);
			\draw[->,thick] (1,-1) -- (1.3,-1);
			\draw[->,thick] (0,1) -- (-0.2,1.15);
			\draw[->,thick] (0.08,1.7) -- (-0.2,1.6);
			\draw[->,thick] (0.88,-0.5) -- (1.14,-0.35);
			
			\node at (-0.8,-0.6) {$\Omega_u^+(t)$};
			\node at (1,0.7) {$\Omega_u^-(t)$};
            \node[shift={(0,0)}] at (0.45,-1.5) {$\Gamma(t)$};
		\end{scope}
		
        \begin{scope}[shift={(3,0)}]
			\clip (0,0) circle (2);
            \begin{scope}[shift={(-0.5,-0.5)},scale=1.4]
            \draw[->,thick] (0,-0.1) -- (0.05,0.2);
			\draw[->,thick] (1,-1) -- (1.2,-1.2);
			\draw[->,thick] (0,1) -- (0.1,1.6);
			\draw[->,thick] (-1,-1) -- (-1.3,-1.4);
			\draw[->,thick] (0.88,-0.5) -- (1.39,-0.2);
            
			\draw[thick, fill = MidnightBlue!20,closed,shift={(0.3,0)},scale=0.8] (0,1.5) .. (0.5,1) .. (-0.5,0) .. (0,0) .. (1,-1) .. (0.5,-1.5) .. (-2,1);
            \end{scope}
			\draw[dashed, thick] (0,0) circle (1.99);
						\node at (1,0.7) {$\Gamma(t)$};
			\node at (-0.5,-1) {$\Omega_u^+(t)$};
		\end{scope}
		
	\end{tikzpicture}
         \caption{
         A two-phase (on the left) and one-phase (on the right) classical solutions.}
    \end{figure}
%
%
%
%
\subsection{Short history of the theory of weak solutions}\label{sub:weak-solutions-history} 
The theory of weak solutions has been originally developed by Kamin \cite{Kamenomostskaya} (in dimension $d\le3$) and Oleinik \cite{Oleinik60} (for $d\ge 3$), in the framework of distributional solutions. 
The corresponding theory for $H^1$ solutions was later developed by Ladyzhenskaya, Solonnikov, and Uraltseva \cite{Ladyzhenskaya_Solonnikov_Uraltseva68:Parabolic_equation_BOOK}, Friedman \cite{Friedman_1968}, Cannon and DiBenedetto \cite{CannonDiBenedetto1980:ExistenceStefan}, and G\"otz and Zaltzman \cite{GotzZaltzman:NonincreaseMushyInhomogeneousPb}. 
Specifically, in \cite{Ladyzhenskaya_Solonnikov_Uraltseva68:Parabolic_equation_BOOK,Friedman_1968,CannonDiBenedetto1980:ExistenceStefan} the authors recast the Stefan problem as a differential inclusion, namely
\begin{equation}\label{e:stefan-energetic-appendix-inclusion}
	\de_t H(u) \ni \Delta u \qquad\text{in}\quad D\times(0,+\infty),
\end{equation}
where the enthalpy $H(u):D\times [0,+\infty)\to\R$ is a multivalued function of $u$, such that,
\begin{equation}\label{e:definition-of-the-enthalpy-H}
    H(u):= 
    \begin{cases}
        u+1 &\text{if}\quad u>0\\
        [-1,1] &\text{if}\quad u=0\\
        u-1 &\text{if}\quad u<0.
    \end{cases}
\end{equation}
In this framework, $u$ is an enthalpy solution in $D\times(0,+\infty)$ if there exists $v\in H(u)$ such that for almost every $0\le t_1 < t_2 <+\infty$ 
\begin{equation*}
	\int_{D(t)}v\,\eta \dif x \bigg|_{t=t_1}^{t_2} 
	= \int_{t_1}^{t_2}\int_D v\,\de_t \eta 
	-\nabla u \cdot \nabla \eta \dif x \dif t,
\end{equation*}
where $\eta$ is a proper test function (Neumann or Dirichlet). 
We remark that this formulation is slightly different point of view of the one introduced in \Cref{def:enthalpy_solution,def:enthalpy_solution-F}, where the solutions are couples $(u,\mu)$ such that $v = u +\mu$ (and so if $(u,\mu)$ is a solution in the sense of \Cref{def:enthalpy_solution} (or \Cref{def:enthalpy_solution-F}) then $u$ is a solution in the sense of \eqref{e:stefan-energetic-appendix-inclusion}).

    \begin{figure}[ht]
   \centering 	
    \begin{tikzpicture}[scale=0.9,use Hobby shortcut]
		\begin{scope}[shift={(-3,0)}]
			\clip (0,0) circle (2);
			\draw[fill = black!10] (0,0) circle (2.5);
			\draw[thick, fill = MidnightBlue!20,closed] (-1,2) .. (0,1) .. (-0.5,0) .. (0,0) .. (1,-1) .. (0.5,-2) .. (-2,-2) .. (-2,2);
			\draw[dashed, thick] (0,0) circle (1.99);

            \draw[thick, fill = black!10] (-1.5,0.3) circle (0.1);
            \draw[thick, fill = black!10] (-1,-0.6) circle (0.2);
            \draw[thick, fill = MidnightBlue!20] (0.9,1) circle (0.2);
            \draw[thick, fill = MidnightBlue!20] (1.4,-0.2) circle (0.1);

            \draw[->,thick] (0,0) -- (-0.04,0.3);
			\draw[->,thick] (-0.12,0.8) -- (0.1,0.56);

			\node at (-0.2,-1) {$\Omega_u^+(0)$};
			\node at (1.25,0.2) {$\Omega_u^-(0)$};

		\end{scope}
    
		\begin{scope}[shift={(3,0)}]
			\clip (0,0) circle (2);
			\draw[fill = black!10] (0,0) circle (2.5);
			
			\draw[thick, fill = MidnightBlue!20,closed,shift={(0,0.2)}] (-1,1) .. (0,0.05) .. (-1,-1) .. (0,0) .. (1,-1) .. (1,-2) .. (-2,-2) .. (-2,2);

            \draw[thick, fill = MidnightBlue!20] (0.9,1) circle (0.1);

			\draw[dashed, thick] (0,0) circle (1.99);

			\node at (0.2,-1) {$\Omega_u^+(t)$};
			\node at (1.25,0.2) {$\Omega_u^-(t)$};

		\end{scope}
	\end{tikzpicture}
         \caption{An example of a configuration that develops topological changes in finite time.}
        \label{fig:shrinking}
    \end{figure}

The enthalpy formulation requires no a priori regularity assumptions on the free interface $\Gamma$, so it allows to treat topology changes such as vanishing or fragmenting regions (see \Cref{fig:shrinking}). The loss of control over the geometry of the interface is compensated by the stable numerical properties of the enthalpy formulation. This has been originally pointed out by Atthey in \cite{Atthey74}, where he proved the numerical convergence of a finite differences scheme for 1D enthalpy solutions and  gave an example of solution with non-empty zero set, which he named \emph{mushy region}.

\subsection{Remarks about the physical interpretation of the enthalpy and the mushy region}\label{sub:mushy-physical}  From a physical point of view, the enthalpy of the material is defined as
\[
H = U + pV,
\]
where $U$ denotes the internal energy, $p$ is the pressure, and $V$ is the volume of the system.
In the Stefan problem, both pressure and volume remain constant throughout the evolution.
Therefore the enthalpy completely describes the energy state of the system.
Namely, we can write 
$$H(x,t) = u(x,t) + \mu(x,t),$$
where $u$ is the temperature of the system, while $\mu$ is the mushy coefficient and encodes the information about the state of the material, that is, 
\[
\mu(x,t) :=\lim_{r\to0} \frac{|B_r(x)\cap\{\text{liquid at time }t\}| - |B_r(x)\cap\{\text{solid at time }t\}|}{|B_r|}.
\]
Therefore, the matter state is uniquely determined by the value of the enthalpy, identifying the liquid state with $\{\mu=1\}$, the solid one with $\{\mu = -1\}$, and one can interpret the \emph{mushy region} as the points where the energy is too high to allow the material to stay in the solid state, but is not sufficient to completely melt it. Specifically, for all $t\ge0$, one can define $\mathcal M(t)\subset \R^d$, the \emph{mushy region at time $t$}, as 
\begin{equation}\label{e:mushy-intro0}
	\mathcal M(t) := \left\{x \in D: |\mu(x,t)| <1\right\}.
\end{equation}

	\begin{figure}[htbp]
	\centering
	\begin{tikzpicture}[use Hobby shortcut]
		\begin{scope}
			\clip (0,0) circle (2);
			\draw[thick, fill = MidnightBlue!20] (-1,2) .. (0,1) .. (-0.5,0) .. (0,-0.1) .. (0,-0.7) .. (-1.8,-2) .. (-3,-2) .. (-2,2);
			\draw[thick, fill = black!10] (-1,2) .. (0,1) .. (-0.5,0) .. (0,0) .. (1,-1) .. (2.5,-2) .. (3,-2) .. (2,2);
			
			\draw[dashed, thick] (0,0) circle (1.99);
			\draw[thick,->] (0,1) -- (0.3,0.8);
			\draw[thick,->] (0,-0.7) -- (0.3,-0.9);
			\draw[thick,->] (1,-1) -- (0.7,-1.2);
		\end{scope}
		\node at (-1,-0.7) {$\Omega_u^+(t)$};
		\node at (1,0.5) {$\Omega_u^-(t)$};
		\node at (0.3,-1.6) {$\{ u=0\}$};
		
		\node at (0,2.3) {$\Gamma_{tp}(t)$};
		\node[rotate=-135, shift={(0,2.5)}, rotate=135] at (0,0) {$\Gamma^-_{op}(t)$};
		\node[rotate=145, shift={(0,2.5)}, rotate=-145] at (0,0) {$\Gamma^+_{op}(t)$};
		
		
	\end{tikzpicture}
	\caption{An example of a solution of the Stefan problem in $\R^2$ with non-empty zero set. }
	\label{fig:non-empty-mushy}
\end{figure}

Solutions with non-empty mushy region have been widely investigated starting from the 80s, as Atthey in \cite{Atthey74} showed the first example of a numerically stable (enthalpy) solution with non empty mushy region.
In particular it has been proved that in the presence of certain  heat sources \cite{Meirmanov81:ExampleMushy,Primicerio83:Mushy} or with space-dependent melting temperature \cite{FasanoPrimicerio85:Mushy-variablemelting}, the problem naturally develops a mushy region, while if the problem is homogeneous $\mathcal M(t)$ is non increasing in time \cite{RogersBerger1984:MonotonicityMushyRegionTwoPhaseStefan,GotzZaltzman:NonincreaseMushyInhomogeneousPb}. In the latter cases, a wide research field investigated the dynamic of such region (for a more complete discussion on this last topic, see \cite{Paiano_Velichkov2026:monotonicity} and the references therein).

On the other hand regular solutions of the Stefan problem with possible non empty mushy region $\mathcal M(t)$ have been widely studied in dimension one; for a detailed account on the results in the 1D case we refer to the monographs \cite{rubinstein71:Stefan,Meirmanov_1992,Visintin:BookPhaseTransition}, and the references therein. 
We notice that in 1D the positivity set $\{u>0\}$ and the negativity set $\{u<0\}$ are unions of disjoint intervals; in particular, when these intervals are finitely many, the two-phase Stefan problem is locally equivalent to either the one-phase problem or to the two-phase problem with empty mushy region (see \Cref{fig:1D-profiles}). 
This reduction is no longer possible in dimension $d\ge 2$ (see \Cref{fig:non-empty-mushy}), where the description of the free boundaries remains an open problem.
So, in the next Section we introduce a {\it classical} formulation for the Stefan problem with non-empty mushy region in higher dimension.

\begin{center}
    \begin{figure}[ht]
	\begin{tikzpicture}[use Hobby shortcut,yscale=0.7,xscale=0.9]
		\begin{scope}[shift={(-4,0)}]
			\clip (-2,-2) rectangle (4,2.5);

            \begin{scope}
                \clip (-3,0) rectangle (3.9,2);
                    \draw[fill = MidnightBlue!20] (0,-4) -- (0,0) .. (2,0.7) .. (3,0) .. (5,-2) -- (5,-4);
            \end{scope}

            \begin{scope}
                 \clip(-2,-2) rectangle (3.9,0);
                     \draw[fill = black!10]  
                         (3,0) -- (5,-3) -- (5,0);

                    \draw[fill = black!10]
                        (-4,-2) -- (-0.5,0) -- (-4,0);
            \end{scope}

            \draw[->] (-3,0) -- (4,0);
            \draw[->] (0,-2) -- (0,1.8);
            \node at (0.8,1.8) {$u(\cdot,0)$};			
		\end{scope}

		\begin{scope}[shift={(4,0)}]
			\clip (-2,-2) rectangle (4,2.5);

            \begin{scope}
                \clip (-3,0) rectangle (3.9,4);
                    \draw[fill = MidnightBlue!20,shift={(-0.3,0)}] 
                        (0,-4) -- (0,0) .. (2,0.7) .. (3,0) .. (5,-2) -- (5,-4);
            \end{scope}

            \begin{scope}
                 \clip(-2,-4) rectangle (3.9,0);
                     \draw[fill = black!10,shift={(-0.3,0)}]  
                         (3,0) -- (5,-3) -- (5,0);

                    \draw[fill = black!10,shift={(0.2,0)}]
                        (-4,-2) -- (-0.5,0) -- (-4,0);
            \end{scope}

            \draw[->] (-3,0) -- (4,0);
            \draw[->] (0,-2) -- (0,1.8);
            \node at (0.8,1.8) {$u(\cdot,t)$};
		\end{scope}
	\end{tikzpicture}
    \caption{
        The example shows the temperature graph for a one-dimensional two-phase solution with a mushy region. 
        When the phases meet, the mushy region disappears, and so the problem is always locally equivalent to a one-phase problem or a collapsed two-phase one, while no intermediate configurations appear.
        }
        \label{fig:1D-profiles}
    \end{figure}
\end{center}

\subsection{The two-phase Stefan problem revisited}\label{sub:new-classical}
As we already observed above, 
there might be weak solutions that do not enter in the framework of the classical formulations  discussed in \Cref{sub:history-classical-formulation} due, for instance,  to a loss of regularity of the free interface or the presence of a mushy region. In this section, we discuss the notion of classical solution naturally associated to the weak formulations from \Cref{def:enthalpy_solution} and \Cref{def:enthalpy_solution-F}.

\begin{proposition}[Classical solutions are weak solutions]\label{prop:classical-enthalpy}
If $D\subset\R^d$ be a bounded open set and $h:D\to(-1,1)$ be a continuous function. Let $u:D\times[0,+\infty)\to\R$ be a Lipschitz continuous function in space-time and satisfying \eqref{e:regularity-assumptions-u-classical-solutions}.  
    Suppose that the surfaces $\Gamma^\pm$ are Lipschitz in space-time. Furthermore, we suppose that there are relatively open subsets of $\Gamma^\pm$ 
    \begin{equation}\label{e:appendix-definition-of-Gamma-tp-op}
    \Gamma_{tp}\subset\Gamma^+\cap\Gamma^-\ ,\quad \Gamma^+_{op}\subset\Gamma^+\setminus\Gamma^-\quad\text{and}\quad \Gamma^-_{op}\subset\Gamma^-\setminus\Gamma^+,
    \end{equation}
    which are $C^1$ manifolds transversal to each $D\times\{t\}$
    and are such that 
    \begin{equation}\label{e:appendix-Hausdorff-measure-of-Gamma}
    \mathcal H^{d}\Big(\Gamma^\pm\setminus(\Gamma_{tp}\cup\Gamma^\pm_{op})\Big)=0.
    \end{equation}
  Suppose that $u$ satisfies \begin{equation}\label{e:classical-solution-full}\tag{ST}
	\begin{cases}
		\de_t u^+ = \Delta u^+ & \text{in}\quad \Omega^+_u,\\
		\de_t u^- = \Delta u^- & \text{in}\quad \Omega^-_u,\\
		V_\nu = \frac{1}{1-h(x)}|\nabla u^+| &\text{on}\quad \Gamma_{op}^+,\\
		V_\nu = \frac{1}{h(x)-1} |\nabla u^-| &\text{on}\quad \Gamma_{op}^-,\\
		V_\nu =\frac{1}{2}\left( |\nabla u^+|- |\nabla u^-| \right)&\text{on}\quad \Gamma_{tp}.
	\end{cases}
\end{equation}
Let $\mu : D \times[0,+\infty) \to \R$ be defined as
\[
    \mu(x,t) = h(x) \qquad\text{in} \quad \big\{u(x,t) = 0\big\}.
\]
Then, $(u,\mu)$ is an enthalpy solution in the sense of \Cref{def:enthalpy_solution}, with initial mushy coefficient $h(x)$.
\end{proposition}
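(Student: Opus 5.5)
We set $u_M\equiv0$ and $F\equiv0$, and define $\mu:=1$ on $\Omega_u^+$, $\mu:=-1$ on $\Omega_u^-$, and $\mu:=h$ on $\{u=0\}$. We then verify the four conditions of \Cref{def:enthalpy_solution} in order.

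Conditions (i) and (iii) are immediate. Since $u$ is Lipschitz in space-time on the bounded set $D$, we have $u\in L^2_{loc}([0,\infty);H^1(D))$ and $\partial_tu\in L^2_{loc}$. Compatibility (C-ST) holds by construction, for every $t$ and every $x$.

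For the weak continuity (ii), write $\mu(\cdot,t)=\ind_{\Omega^+_u(t)}-\ind_{\Omega^-_u(t)}+h\,\ind_{\{u(\cdot,t)=0\}}$. The sets $\Omega^\pm_u(t)$ move continuously in $t$ because $u$ is continuous, and their boundaries $\Sigma^\pm(t)$ have zero $d$-dimensional measure. This follows from transversality of the $C^1$ pieces together with \eqref{e:appendix-Hausdorff-measure-of-Gamma}: a Lipschitz $d$-surface in $\R^{d+1}$ meets almost every slice in a Lebesgue-null set. To get every slice, we use that $\Gamma^\pm$ is a Lipschitz graph over time, hence each slice is a Lipschitz $(d-1)$-dimensional set. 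Consequently $\ind_{\Omega^\pm_u(\tau)}\to\ind_{\Omega^\pm_u(t)}$ pointwise a.e. as $\tau\to t$, and dominated convergence gives weak-$\ast$ continuity, even strong $L^1$ continuity. The middle term is handled identically, since $\{u(\cdot,\tau)=0\}=D\setminus(\Omega_u^+\cup\Omega_u^-)(\tau)$.

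The main step is the integral identity (H-ST). Fix $\eta\in C^\infty_c(D\times\R)$ and $t_1<t_2$, and split $D\times(t_1,t_2)$ into $\Omega^+_u$, $\Omega^-_u$ and the zero set $Z=\{u=0\}$.

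On $Z$ we have $u=0$, $\nabla u=0$ a.e., and $\mu=h(x)$ independent of $t$. Hence $\iint_Z (u+\mu)\partial_t\eta-\nabla u\cdot\nabla\eta=\iint_Z h\,\partial_t\eta$, which must be integrated by parts in $t$ along vertical lines.

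On $\Omega^\pm_u$ we apply the divergence theorem in space-time to the field $((u\pm1)\eta,\;-\eta\nabla u)$. Using $\partial_tu=\Delta u$, the interior contributions cancel. What remains are the top and bottom slices $\int(u\pm1)\eta$, plus boundary integrals over $\Gamma^\pm$:
\[
\int_{\Gamma^\pm}\eta\big[(u\pm1)\widetilde\nu_t\mp\nabla u\cdot\widetilde\nu_x\big]\dif\mathcal H^d.
\]
On $\Gamma^\pm$ we have $u=0$. The gradient $\nabla u^\pm$ is parallel to $\widetilde\nu_x$, with $\nabla u\cdot\widetilde\nu_x=\mp|\nabla u^\pm|\,|\widetilde\nu_x|$ under the orientation conventions, and $\widetilde\nu_t=-V_\nu|\widetilde\nu_x|$.

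The vertical integration on $Z$ similarly produces slice terms $\int_{Z(t)}h\eta$ and boundary terms $\int_{\partial Z}h\,\eta\,\widetilde\nu_t$.

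Summing, the slice terms assemble exactly into $\int (u+\mu)\eta\big|_{t_1}^{t_2}$. The boundary terms, which live only on $\Gamma_{tp}\cup\Gamma^\pm_{op}$ up to an $\mathcal H^d$-null set, reduce to $\int\eta|\widetilde\nu_x|\,[\text{jump of }\mu]\,V_\nu$ minus the flux jump.

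On $\Gamma^+_{op}$ the jump of $\mu$ is $1-h$, so the boundary term vanishes precisely when $(1-h)V_\nu=|\nabla u^+|$. On $\Gamma^-_{op}$ the jump is $h-(-1)$; the sign conventions must be checked here against the stated law $V_\nu=|\nabla u^-|/(h-1)$, which encodes orientation. On $\Gamma_{tp}$ the jump is $2$, giving $2V_\nu=|\nabla u^+|-|\nabla u^-|$. The free boundary conditions in \eqref{e:classical-solution-full} therefore kill every boundary term, and (H-ST) follows.

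The main obstacle is justifying the divergence theorem on the non-smooth domains $\Omega^\pm_u\cap\{t_1<t<t_2\}$ and on $Z$. We will treat this by noting that the domains have Lipschitz boundaries with $u\in C^{1,1}$ up to the boundary, so the Gauss–Green theorem for Lipschitz domains applies. The exceptional set $\Gamma^\pm\setminus(\Gamma_{tp}\cup\Gamma^\pm_{op})$ is $\mathcal H^d$-null and the integrands are bounded, so it contributes nothing. Slices $t=t_i$ that are not transversal are handled by first proving the identity for a.e. $t_1,t_2$ and then extending to all times using the continuity from (ii) and the continuity of $u$.

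Finally, the initial mushy coefficient is read off at $t=0$: $\mu(x,0)=h(x)$ on $\{u(\cdot,0)=0\}$, and $\pm1$ on the phases.
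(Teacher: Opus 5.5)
Your strategy is the paper's: apply Gauss--Green on $\Omega^+_u$, $\Omega^-_u$ and $\{u=0\}$, then cancel the boundary terms with the free boundary laws. The differences are that you work on time slabs and check (i)--(iii) explicitly, whereas the paper proves the identity for $\eta\in C^\infty_c(D\times(0,+\infty))$ and recovers the slice terms with the cut-offs $\ind^\eps_{t_1}(1-\ind^\eps_{t_2})$. Two of your steps do not go through, and in both cases the obstruction is in the statement itself.

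\emph{The check you postponed on $\Gamma^-_{op}$ fails.} Use the paper's orientation: $\widetilde\nu^-$ points into $\Omega^-_u$, so $\widetilde\nu^-_x=-|\widetilde\nu^-_x|\,\nabla u^-/|\nabla u^-|$ and $\widetilde\nu^-_t=-V_\nu|\widetilde\nu^-_x|$. The boundary contribution on $\Gamma^-_{op}$ is then
\[
\int_{\Gamma^-_{op}}\eta\big[(1+h)\,\widetilde\nu^-_t+\nabla u^-\cdot\widetilde\nu^-_x\big]\dif\mathcal H^d=-\int_{\Gamma^-_{op}}\eta\,|\widetilde\nu^-_x|\big[(1+h)V_\nu+|\nabla u^-|\big]\dif\mathcal H^d .
\]
This vanishes for all $\eta$ only if $V_\nu=-|\nabla u^-|/(1+h)$. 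In 1D this is just Rankine--Hugoniot: $(1+h)s'=|\partial_x u^-|$ for ice $\{x<s(t)\}$ freezing mush of enthalpy $h$. The stated law $V_\nu=|\nabla u^-|/(h-1)$ prescribes $|V_\nu|=|\nabla u^-|/(1-h)$. No choice of orientation reconciles this with $1+h$ when $h\neq0$, and it leaves the residual $\int_{\Gamma^-_{op}}\eta\,|\widetilde\nu^-_x|\,|\nabla u^-|\,\frac{2h}{1-h}\dif\mathcal H^d$. So ``the free boundary conditions kill every boundary term'' is false on $\Gamma^-_{op}$. The hypothesis in \eqref{e:classical-solution-full} must be corrected to $V_\nu=-\frac{1}{1+h}|\nabla u^-|$, which is the image of the $\Gamma^+_{op}$ law under $(u,\mu,h)\mapsto(-u,-\mu,-h)$. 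Your $\Gamma^+_{op}$ and $\Gamma_{tp}$ computations are correct. The paper's proof does not resolve this either: it writes $\widetilde\nu^-_x=\frac{|\nu_x|}{|\nabla u^+|}\nabla u^-$, which has the wrong sign, and even so its final integrand $-(h+1)V^-_\nu+|\nabla u^-|$ does not reproduce the stated law.

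\emph{The initial time.} For $t_0>0$ you do not need $\Gamma^\pm$ to be a Lipschitz graph over time, which is not assumed. The slice of each transversal $C^1$ piece is a $C^1$ $(d-1)$-manifold, and the slice of an $\mathcal H^d$-null set is $\mathcal L^d$-null. So \eqref{e:appendix-Hausdorff-measure-of-Gamma} already gives that $\Gamma^\pm\cap\{t=t_0\}$ is Lebesgue-null for every $t_0>0$. That yields a.e.\ convergence of $\mu(\cdot,\tau)$ and the correct slice terms for all $t_1,t_2>0$.

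At $t=0$ nothing is assumed, since $\Gamma^\pm$ lives only in $D\times(0,+\infty)$, and the claim is false. Take $D=B_1$, $|x_0|=2$, $u(x,t)=(4\pi t)^{-d/2}e^{-|x-x_0|^2/4t}$ and $u(\cdot,0)=0$. Then $u$ is smooth and Lipschitz on $\overline{B_1}\times[0,+\infty)$, caloric, and positive for $t>0$. Moreover $\Gamma^\pm=\emptyset$, so all hypotheses hold. But $\mu(\cdot,0)=h$ while $\mu(\cdot,t)=1$ for $t>0$, so (ii) fails at $t=0$. Also \eqref{e:weak_solution} with $t_1=0$ is off by $\int_D(1-h)\eta(\cdot,0)\dif x$.

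In your slab argument this appears as follows: the bottom face of $\Omega^+_u\cap\{0<t<t_2\}$ is the slice of $\overline{\Omega^+_u}$ (here all of $B_1$), not $\Omega^+_u(0)=\emptyset$. In the paper's argument, the cut-off limit at $t_1=0$ only gives $\lim_{s\to0^+}\int_D(u+\mu)(\cdot,s)\,\eta\dif x$. You need an extra hypothesis at the initial time, e.g.\ that $\{x\in D:\ u(x,0)=0,\ (x,0)\in\overline{\Omega^+_u\cup\Omega^-_u}\}$ is Lebesgue-null. With it, your proof of (ii) at $t=0$ and the passage $t_1\to0^+$ go through.
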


\begin{remark}
The formulation \eqref{e:classical-solution-full} covers both the classical {\it collapsed} two-phase problem \eqref{e:classic-stefan-tp1}, in which $\Interior\{u=0\}=\emptyset$ and $\Gamma= \Gamma_{tp}$, and one-phase problem \eqref{e:classical-Stefan-op}, in which $u \ge0$ and $\Gamma = \Gamma^+_{op}$.
\end{remark}

\begin{remark}
In the setting of \Cref{sub:weak-solutions-history} and \Cref{sub:weak-solutions-definitions}, the two phases $\Omega^\pm_u$ might collapse with positive speed, and these phenomena appear also in the 1D problem. Indeed, let us consider the following example: let $u : \R \times \R \to \R$ be defined as
\[
    u(x,t) := 
    \begin{cases}
        2\left(e^{x - 2t}-1\right)^+ + \left(e^{x+t} -1\right)^- &\text{for all}\quad t\le 0,\\
        2\left(e^{t-x}-1\right)^+ + \left(e^{t-x}-1 \right)^- &\text{for all}\quad t > 0.
    \end{cases}
\]
Then $u$ is a two-phase solution with empty mushy region (for $t<0$) and the free boundaries $\Gamma^+$ and $\Gamma^-$ are only Lipschitz continuous in time.
On the other hand, it is natural to expect that for {\it flat} smooth solutions $\Gamma^\pm(t)$ are $C^1$ in space, for all $t\ge 0$. 
\end{remark}

\begin{proof}[Proof of \Cref{prop:classical-enthalpy}]
 We will show that 
    \begin{equation}\label{e:weak_solution-interior}
        \iint (u+\mu) \de_t \eta 
            -\nabla u \cdot \nabla \eta \dif x \dif t =0 \quad\text{for all}\quad \eta\in C^\infty_c(D\times(0,+\infty)).
    \end{equation}
    The validity of \eqref{e:weak_solution} in the general case $0\le t_1<t_2$ and $\eta \in C^\infty_c(D \times\R)$ follows by testing the above equation with $\ind_{t_1}^\eps(1-\ind_{t_2}^\eps)\, \eta(x,t)$, where $\ind_{t_i}^\eps$ is defined as in \eqref{e:approx_caratteristica_begin}, and taking the limit as $\eps\to0$.


%
    Let us  divide the integration domains into $\Omega_u^+$, $\Omega_u^-$, and $\{u=0\}$, and treat each domain separately.

    In $\Omega_u^+$ we have that $\mu\equiv1$, so applying the divergence theorem to the vector field $(0,\eta)\in\R^d\times\R$ we get the following identity
    \[ 
    \iint_{\Omega_u^+} \mu \de_t\eta \dif x\dif t = \iint_{\Omega_u^+} \de_t\eta \dif x\dif t = \iint_{\Gamma^+} \eta\, \widetilde \nu_t^+ \dif \mathcal H^{d}.
    \]
    Still in $\Omega_u^+$, by the divergence theorem applied to the field $(\eta\,\nabla u,0)\in \R^d\times\R$, the gradient term becomes
        \begin{align*}
        \iint_{\Omega_u^+} \nabla u \cdot \nabla \eta \dif x \dif t 
           & = - \iint_{\Omega_u^+} \Delta u \,  \eta \dif x \dif t 
                + \iint_{\Gamma^+} \nabla u \cdot \widetilde\nu_x^+ \, \eta \dif\mathcal H^{d}\\
                & = - \iint_{\Omega_u^+} \partial_tu \,  \eta \dif x \dif t 
                + \iint_{\Gamma^+} \nabla u \cdot \widetilde\nu_x^+ \, \eta \dif\mathcal H^{d}\\
                 & = \iint_{\Omega_u^+}  u \,  \partial_t\eta \dif x \dif t 
                + \iint_{\Gamma^+} \nabla u \cdot \widetilde\nu_x^+ \, \eta \dif\mathcal H^{d}
        \end{align*}
where we used that $u$ is caloric in $\Omega^+_u$ and vanishes on $\Gamma^+$, while $\eta$ vanishes for $t=0$.

Analogously, in $\Omega_u^-$ we have 
    \begin{align*}
    \iint_{\Omega_u^-} \mu \,\de_t\eta \dif x\dif t  &= \iint_{\Gamma^-} \eta\, \widetilde\nu_t^- \dif x\dif t,
    \end{align*}
    and
    \begin{align*}
        \iint_{\Omega_u^-} \nabla u \cdot \nabla \eta \dif x \dif t 
          &  = \iint_{\Omega_u^-}  u \,  \partial_t\eta \dif x \dif t 
                - \iint_{\Gamma^-} \nabla u \cdot \widetilde\nu_x^- \, \eta \dif \mathcal H^d ,
    \end{align*}
    as $\widetilde\nu^-$ is the inward normal vector to $\Omega^-_u$.
    By construction $\de \{u=0\}\cap(D\times(0,+\infty)) = \Gamma^+\Delta \Gamma^-$ and therefore, the divergence theorem applied to the $(d+1)$-dimensional vector field $(0,h\,\eta)$ gives
    \begin{align*}
        \iint_{\{u=0\}} \mu \,\de_t\eta \dif x \dif t 
            = - \iint_{\Gamma^+\setminus\Gamma_-} h \, \eta\, \widetilde\nu_t^+ \dif x \dif t 
            + \iint_{\Gamma^-\setminus\Gamma_+} h \, \eta \, \widetilde\nu_t^- \dif x.
    \end{align*}
Putting together these identities, we get 
    \begin{align*}
\iint (u+\mu) \de_t \eta 
           & -\nabla u \cdot \nabla \eta \dif x \dif t\\
           &=\iint_{\Gamma^+} \eta\, \widetilde \nu_t^+ \dif \mathcal H^{d}+\iint_{\Gamma^-} \eta\, \widetilde \nu_t^- \dif \mathcal H^{d}\\
           &\qquad- \iint_{\Gamma^+_{op}} h \, \eta\, \widetilde\nu_t^+ \dif x \dif t 
            + \iint_{\Gamma^-_{op}} h \, \eta \, \widetilde\nu_t^- \dif x\\
            &\qquad\qquad-\iint_{\Gamma^+} \nabla u_+ \cdot \widetilde\nu_x^+ \, \eta \dif\mathcal H^{d}+\iint_{\Gamma^-} \nabla u_- \cdot \widetilde\nu_x^- \, \eta \dif\mathcal H^{d}.
    \end{align*}
    By using \eqref{e:appendix-Hausdorff-measure-of-Gamma} we can decompose these integrals on $\Gamma_{tp}$ and $\Gamma_{op}^\pm$ as follows:
   \begin{equation}\label{e:integral-classical}
   \begin{split}
\iint (u+\mu) \de_t \eta 
           & -\nabla u \cdot \nabla \eta \dif x \dif t\\
           &=\iint_{\Gamma_{tp}} 2\eta\, \widetilde \nu_t^+  - (\nabla u_+ - \nabla u^-) \cdot \widetilde \nu_x^+ \dif \mathcal H^{d}\\
           &\hspace{1cm}+\iint_{\Gamma^+_{op}} (1-h) \, \eta \, \widetilde\nu_t^+ - \nabla u^+\cdot \widetilde\nu_x^+  \dif \mathcal H^d \\
            &\hspace{2cm}
            +\iint_{\Gamma^-_{op}} (h+1)\,\eta\, \widetilde \nu_t^- + \nabla u^- \cdot \widetilde\nu_x^-  \dif \mathcal H^d,
    \end{split}
    \end{equation}
where we used that on $\Gamma_{tp}$, $\widetilde\nu^+ = \widetilde\nu^-$.
We last to check that the latter integrals on the right hand vanish.

Without loss of generality, we can assume that $(|\nabla u^+| + |\nabla u^-|)\ne0$ for $\mathcal H^d$-almost every point on $\Gamma^+ \cup \Gamma^-$.
Otherwise, if both the gradients vanish at $(x,t)\in \Gamma^+\cup \Gamma^-$, then $V_\nu^\pm(x,t) =0$, as $u$ is a solution of \eqref{e:classical-solution-full}. Moroever, the transversality condition \eqref{e:normal-velocity} holds on $\Gamma_{tp}\cup\Gamma^{+}_{op}\cup\Gamma^{-}_{op}$, and therefore $\widetilde\nu_t^\pm=0$.
Thus, for $\mathcal H^d$-almost every point in $\Gamma^+ \cup \Gamma^-$ they hold
\[
    \widetilde\nu_x^+ = -\frac{|\nu_x|}{|\nabla u^+|} \nabla u^+ \qquad \text{on} \quad \Gamma^+ \qquad\text{and}\qquad \widetilde\nu_x^- = \frac{|\nu_x|}{|\nabla u^+|} \nabla u^- \qquad \text{on} \quad \Gamma^-.
\]
So, we can rewrite the right hand side of \eqref{e:integral-classical}, by \eqref{e:normal-velocity}, we get
\[
   \begin{split}
\iint (u+\mu) \de_t \eta 
           & -\nabla u \cdot \nabla \eta \dif x \dif t\\
           &=\iint_{\Gamma_{tp}} |\widetilde\nu_x^+|\big( -2\, V_\nu^+  + |\nabla u_+| - |\nabla u^-| \big) \eta \dif \mathcal H^{d}\\
           &\hspace{1cm}+\iint_{\Gamma^+_{op}} |\widetilde\nu_x^+| \big( -(1-h) \, V_\nu^+ + |\nabla u^+|\big)\eta  \dif \mathcal H^d \\
            &\hspace{2cm}
            +\iint_{\Gamma^-_{op}} |\widetilde\nu_x^-|\big(-(h+1)\, V_\nu^- + |\nabla u^-| \big)\eta  \dif \mathcal H^d,
    \end{split}
\]
that are precisely the conditions of \eqref{e:classical-solution-full}, and thus they all are zero.
\end{proof}

\begin{remark}
In the presence of a force term $F \in C^\infty(D;\mathbb{R}^d)$ the classical counterpart of the weak formulation \eqref{def:enthalpy_solution-F} is given by the following system 
\begin{equation}\label{e:intro:regular-no-assumptions}
	\begin{cases}
		\de_t u = \Delta u + \dive F & \text{in}\quad \Omega_u^+\\
		\de_t u = \Delta u + \dive F & \text{in}\quad \Omega_u^-\\
		\de_t \mu = \dive F & \text{in}\quad \{u=0\}\\
		\nu_t^+ = \frac12(\nabla u^-- \nabla u^+) \cdot\nu_x^+ & \text{on}\quad \Gamma_{tp}\\
		\big(\mu-1\big)\nu_t^+ = \nabla u^+ \cdot \nu_{x}^+ &\text{on}\quad \Gamma^{+}_{op}\\
		\big(1+\mu \big)\nu_t^- = -\nabla u^- \cdot \nu_{x}^- &\text{on}\quad \Gamma^{-}_{op},
	\end{cases}
\end{equation}
We notice that in this case the mushy coefficient $\mu$ in the zero set $\{u=0\}$ depends on the time variable. 
\end{remark}

\Cref{prop:classical-enthalpy} suggests that we can define the classical solutions of the Stefan problem as follows. 
\begin{definition}[Classical Solution - revised]\label{def:appendix-classical-sol}
    Let $D\subset\R^d$ be an open set with smooth boundary and $h\in C(D)$ with $|h(x)|<1$. 
    We say that $u : D \times[0,+\infty)\to \R$ is a classical solution of the Stefan problem with respect to the initial mushy coefficient $h(x)$ if:
    \begin{itemize}
     \item[(a)] $u:D\times[0,+\infty)\to\R$ is a Lipschitz continuous function in space-time satisfying \eqref{e:regularity-assumptions-u-classical-solutions};
    \item[(b)] the surfaces $\Gamma^\pm:=\partial\Omega_u^\pm\cap\big(D\times(0,+\infty)\big)$ are Lipschitz in space-time;
    \item[(c)] there are $
    \Gamma_{tp}\subset\Gamma^+\cap\Gamma^-$, $\Gamma^+_{op}\subset\Gamma^+\setminus\Gamma^-$ and $\Gamma^-_{op}\subset\Gamma^-\setminus\Gamma^+$ such that:
    \begin{itemize}
    \item $\Gamma_{tp}$ and $\Gamma_{op}^\pm$ are relatively open subsets of $\Gamma^\pm$,
    \item $\Gamma_{tp}$ and $\Gamma_{op}^\pm$ are $C^1$-regular $d$-dimensional manifolds transversal to each $D\times\{t\}$,
    \item \eqref{e:appendix-Hausdorff-measure-of-Gamma} holds;  
    \end{itemize}
    \end{itemize}
    and if $u$ solves \eqref{e:classical-solution-full}.
\end{definition}

The same arguments imply that also the classical theory for sub/super-solutions is coherent with the enthalpy one, namely, we give the following definition.

\begin{definition}[Classical sub/super-solution - revised]\label{def:appendix-classical-subsupersol}
    Let $D\subset\R^d$ be an open set with smooth boundary and $h\in C(D)$ with $|h(x)|<1$. 
    We say that $u : D\times[0,+\infty) \to \R$ is a classical subsolution (resp. supersolution) of the Stefan problem with respect to the initial mushy coefficient $h(x)$ if $u$ satisfies the regularity assumptions (a), (b) and (c) from \Cref{def:appendix-classical-sol} and if 
\begin{equation}\label{e:classical-sub-super-solution-full}
	\begin{cases}
		\de_t u^+ \le \Delta u^+ & \text{in}\quad \Omega^+_u,\\
		\de_t u^- \le  \Delta u^- & \text{in}\quad \Omega^-_u,\\
		V_\nu \le  \frac{1}{1-h(x)}|\nabla u^+| &\text{on}\quad \Gamma_{op}^+,\\
		V_\nu \le \frac{1}{h(x)-1} |\nabla u^-| &\text{on}\quad \Gamma_{op}^-,\\
		V_\nu \le \frac{1}{2}\left( |\nabla u^+|- |\nabla u^-| \right)&\text{on}\quad \Gamma_{tp}.
	\end{cases} \qquad \text{(resp. $\ge$)}
\end{equation}
\end{definition}

\begin{remark}
    We observe that, even if may not appear clear by the convention used, the negative phase of supersolutions expands slower than the one of subsolutions.
\end{remark}

\begin{proposition} Let $u : D\times[0,+\infty) \to \R$ be a classical subsolution (resp. supersolution) in the sense of \Cref{def:appendix-classical-subsupersol}. Then, defining the mushy coefficient $\mu : D \times[0,+\infty) \to \R$ as
\[
    \mu(x,t) = h(x) \qquad\text{in} \quad \big\{u(x,t) = 0\big\},
\]
the couple $(u,\mu)$ is an enthalpy subsolution (resp. enthalpy supersolution) in the sense of \Cref{def:enthalpy-sub-supersol}, with initial mushy coefficient $h$.
\end{proposition}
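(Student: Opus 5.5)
The plan is to repeat the computation in the proof of \Cref{prop:classical-enthalpy}, keeping track of inequalities instead of equalities. The regularity items \ref{item:as:regularity}--\ref{item:as:compatibility} of \Cref{def:enthalpy_solution} follow from assumptions (a)--(c) of \Cref{def:appendix-classical-sol}, exactly as in the solution case:
\begin{itemize}
\item[--] $u$ is Lipschitz, so it has the required $H^1$ regularity in space and time;
\item[--] $\mu\in\{-1,h,1\}$ is determined pointwise by the sign of $u$, so compatibility holds by construction;
\item[--] the free boundaries $\Gamma^\pm$ are Lipschitz and transversal to the time slices, so $t\mapsto\mu(\cdot,t)$ changes only on sets whose measure goes to zero as $\tau\to t$, which gives weak-$\ast$ (indeed $L^1_{loc}$) continuity.
\end{itemize}
So the main task is the integral inequality \eqref{e:subsolution-enthalpy}. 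I will treat the subsolution case; the supersolution case is symmetric.

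\emph{Step 1: the interior inequality.} Fix a nonnegative $\eta\in C^\infty_c(D\times(0,+\infty))$. I will show
\[
\iint (u+\mu)\,\de_t\eta-\nabla u\cdot\nabla\eta\,\dif x\dif t\ \ge\ 0 .
\]
I split the domain into $\Omega_u^+$, $\Omega_u^-$ and $\{u=0\}$ and apply the divergence theorem on each piece, as in the proof of \Cref{prop:classical-enthalpy}.
\begin{itemize}
\item[--] In $\Omega_u^\pm$, integration by parts in space yields the term $-\iint \Delta u\,\eta$. Since $\de_t u\le\Delta u$, we have $-\iint\Delta u\,\eta\le-\iint \de_t u\,\eta$, with the correct sign because $\eta\ge0$. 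Integrating by parts in time gives $\iint u\,\de_t\eta$, so the bulk contributions produce a nonnegative term plus boundary integrals.
\item[--] On $\{u=0\}$, $\mu=h(x)$ is independent of $t$, so its contribution is a pure boundary term on $\Gamma^+\triangle\Gamma^-$.
\end{itemize}

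\emph{Step 2: the free-boundary terms.} Collecting all boundary terms reproduces the right-hand side of \eqref{e:integral-classical}, now with an additional nonnegative bulk term. I then rewrite the normals using $\widetilde\nu_x^\pm=\mp|\widetilde\nu_x^\pm|\,\nabla u^\pm/|\nabla u^\pm|$ and $\widetilde\nu_t^\pm=-|\widetilde\nu_x^\pm|V_\nu^\pm$. This turns the three boundary integrands into
\[
|\widetilde\nu_x|\big(|\nabla u^+|-|\nabla u^-|-2V_\nu\big)\eta,\qquad |\widetilde\nu_x|\big(|\nabla u^+|-(1-h)V_\nu\big)\eta,\qquad |\widetilde\nu_x|\big(|\nabla u^-|-(h+1)V_\nu\big)\eta .
\]
By the subsolution conditions in \eqref{e:classical-sub-super-solution-full}, each of these is nonnegative once the factors $1-h>0$ and $1+h>0$ are handled. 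Points where both gradients vanish contribute nothing, as in the proof of \Cref{prop:classical-enthalpy}.

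\emph{Step 3: from the interior inequality to \eqref{e:subsolution-enthalpy}.} I test the interior inequality with $\ind_{t_1}^\eps(1-\ind_{t_2}^\eps)\,\eta$ for $\eta\in C^\infty_c(D\times\R)$, $\eta\ge0$, and let $\eps\to0$. This uses the continuity in time of $u$ and of $\mu$, and it produces \eqref{e:subsolution-enthalpy}. For $t_1=0$ I approximate from $t_1>0$ using the same continuity.

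\emph{Main obstacle.} The hardest step is the sign bookkeeping on $\Gamma^-_{op}$, which the remark after \Cref{def:appendix-classical-subsupersol} already flags. There, $\nabla u^-$ points in the direction opposite to the one on $\Gamma^+$, and the condition $V_\nu\le\frac{1}{h-1}|\nabla u^-|$ carries a negative coefficient. I must check that multiplying by $|\widetilde\nu_x^-|\,\eta\ge0$ and by the factor $1+h$ (rather than $1-h$) really produces the correct sign; otherwise the normal-velocity convention in \eqref{e:normal-velocity} must be reinterpreted for $\Gamma^-$. I would first verify this on a one-dimensional traveling-wave example before committing to the general computation.
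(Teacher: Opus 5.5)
Your route is the paper's: its proof only says that the computation of \Cref{prop:classical-enthalpy} goes through with inequalities. Your bulk terms, the $\Gamma_{tp}$ and $\Gamma^+_{op}$ terms, Step 3, and items \ref{item:as:regularity}--\ref{item:as:compatibility} are fine (the $H^1$ claim needs bounded $D$, as in \Cref{prop:classical-enthalpy}). The gap is the step you flag and leave open, and Step 2 is in fact wrong there.

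The $\Gamma^-$ boundary terms come from the divergence theorem with $\widetilde\nu^-$ the \emph{inward} normal to $\Omega_u^-$. Since $u^-<0$ in $\Omega_u^-$ and $u^-=0$ on $\Gamma^-$, $\nabla u^-$ points out of $\Omega_u^-$. Hence $\widetilde\nu_x^-=-|\widetilde\nu_x^-|\,\nabla u^-/|\nabla u^-|$, with the same sign as on $\Gamma^+$. This is also what $\widetilde\nu^+=\widetilde\nu^-$ on $\Gamma_{tp}$ forces, and you use it implicitly to get your first integrand. Your rule $\widetilde\nu_x^\pm=\mp|\widetilde\nu_x^\pm|\nabla u^\pm/|\nabla u^\pm|$, taken from the proof of \Cref{prop:classical-enthalpy} (which mixes the two orientations), uses the outward normal on $\Gamma^-_{op}$. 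With a consistent orientation and \eqref{e:normal-velocity}, the $\Gamma^-_{op}$ integrand of \eqref{e:integral-classical} is
\[
(1+h)\,\eta\,\widetilde\nu_t^-+\big(\nabla u^-\cdot\widetilde\nu_x^-\big)\eta=-|\widetilde\nu_x^-|\big((1+h)V_\nu+|\nabla u^-|\big)\eta ,
\]
not $|\widetilde\nu_x^-|(|\nabla u^-|-(1+h)V_\nu)\eta$. The one-dimensional check you propose confirms this. Take $\Omega_u^-=\{x>s(t)\}$ and $\mu=h$ on $\{x<s(t)\}$, so that $V_\nu=s'$. Besides the bulk term $\iint_{\{x>s(t)\}}(\de_{xx}u-\de_tu)\eta$, the weak form produces $\int\eta(s(t),t)\big(-(1+h)s'(t)-|\de_xu(s(t)^+,t)|\big)\dif t$.

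Hence, on $\Gamma^-_{op}$, the enthalpy subsolution (resp. supersolution) inequality is equivalent to $V_\nu\le-\frac{1}{1+h}|\nabla u^-|$ (resp. $\ge$). The condition in \eqref{e:classical-sub-super-solution-full}, $V_\nu\le\frac{1}{h-1}|\nabla u^-|=-\frac{1}{1-h}|\nabla u^-|$, implies this only where $h\ge 0$. Its supersolution version implies the needed inequality only where $h\le 0$, and the two coefficients agree only for $h=0$.

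So what is missing is not an estimate but the correct free-boundary law. The coefficient on $\Gamma^-_{op}$, also in \eqref{e:classical-solution-full}, has to be read as $-\frac{1}{1+h(x)}$, since $1+h$ is the jump of $\mu$ across $\Gamma^-_{op}$. With that reading your computation closes, and the sub- and supersolution cases are genuinely symmetric.

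Even within your own formula, the claimed symmetry fails. For subsolutions you get nonnegativity only through the crude bound $V_\nu\le 0$. For supersolutions, the lower bound $V_\nu\ge\frac{1}{h-1}|\nabla u^-|$ gives no control at all on $|\nabla u^-|-(1+h)V_\nu$. That asymmetry was the sign that the orientation was off.
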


\begin{proof}
    The statement follows as for \Cref{prop:classical-enthalpy} once one observes that integrating by parts, the inequalities  \eqref{e:classical-sub-super-solution-full} imply the ones in \Cref{def:enthalpy-sub-supersol}.
\end{proof}

\begin{remark}
	In terms of the Baiocchi-Duvaut transform (see, for instance, \cite{Duvaut:ResolutionStefan,Baiocchi1974,Figalli2018:Survey}) under the further assumptions that $\de_t w^+ >0$, the dynamic of the free boundary $\Gamma^+(t)$ is locally equivalent to the one described by the following parabolic obstacle problem
	\begin{equation}\label{e:obstacle-space-dep}
	\de_t w = \Delta w - (1-h(x)) \ind_{\Omega_w^+}. 
	\end{equation}
    Indeed, if $u(\cdot,0)\equiv0$ (locally), then one can show with the usual computations that if $w(x,t)$ is the time-integral of the temperature $u(x,t)$, that is,
    \[
        w(x,t) := \int_{t(x)}^t u(x,\tau)\dif \tau,
    \]
    where $t(x) := \inf\{t>0 : u(x,t) >0 \}$, is a solution of \eqref{e:obstacle-space-dep} and it holds $\de \Omega_u^+(t) = \de \Omega_w^+(t)$.
\end{remark}

\subsection*{{Acknowledgments}}
The authors acknowledge the MIUR Excellence Department Project awarded to the Department of Mathematics, University of Pisa, CUP I57G22000700001. 
F.P. is also partially supported by the INdAM - GNAMPA Project "Struttura fine e regolarita' in problemi variazionali non-lineari" (CUP E53C25002010001).
B.V. acknowledges the support of the European Research Council (ERC) via the project ERC
FiRM - Fine structure and regularity of stationary and moving free boundaries (grant agreement No. 101230705).

\bibliographystyle{amsalpha}
\bibliography{Stefan}

\providecommand{\bysame}{\leavevmode\hbox to3em{\hrulefill}\thinspace}
\providecommand{\MR}{\relax\ifhmode\unskip\space\fi MR }
\providecommand{\MRhref}[2]{%
  \href{http://www.ams.org/mathscinet-getitem?mr=#1}{#2}
}
\providecommand{\href}[2]{#2}
\begin{thebibliography}{ACS96b}

\bibitem[ACS96a]{Athanasopoulos_Caffarelli_Salsa_1996a}
Ioannis Athanasopoulos, Luis~A. Caffarelli, and Sandro Salsa, \emph{Caloric
  functions in lipschitz domains and the regularity of solutions to phase
  transition problems}, Annals of Mathematics \textbf{143} (1996), no.~3,
  413--434.

\bibitem[ACS96b]{Athanasopoulos_Caffarelli_Salsa_1996b}
\bysame, \emph{Regularity of the free boundary in parabolic phase-transition
  problems}, Acta Mathematica \textbf{176} (1996), no.~2, 245--282.

\bibitem[And04]{Andreucci2004:StefanNotes}
Daniele Andreucci, \emph{Lecture notes on the stefan problem}, 2004, online
  notes on the personal webpage:
  \url{https://www.sbai.uniroma1.it/pubblicazioni/doc/phd_quaderni/02-1-and.pdf}.

\bibitem[AS26]{AudritoSanzPerela2025:singular-elliptic-regularization}
Alessandro Audrito and Tom{\'a}s {Sanz-Perela}, \emph{On the existence of
  solutions to some singular parabolic free boundary problems}, Nonlinear
  Analysis \textbf{272} (2026), 114170.

\bibitem[AST21]{AudritoSerraTilli:SegregatedSolutions}
Alessandro Audrito, Enrico Serra, and Paolo Tilli, \emph{A minimization
  procedure to the existence of segregated solutions to parabolic
  reaction-diffusion systems}, Comm. Partial Differ. Equations \textbf{46}
  (2021), no.~12, 2268--2287.

\bibitem[Att74]{Atthey74}
David~R. Atthey, \emph{A {{Finite Difference Scheme}} for {{Melting
  Problems}}}, IMA Journal of Applied Mathematics \textbf{13} (1974), no.~3,
  353--366.

\bibitem[Bai75]{Baiocchi1974}
Claudio Baiocchi, \emph{Free boundary problems in the theory of fluid flow
  through porous media}, Proceedings of the {I}nternational {C}ongress of
  {M}athematicians ({V}ancouver, {B}.{C}., 1974), {V}ol. 2, Canad. Math.
  Congr., Montreal, QC, 1975, pp.~237--243.

\bibitem[CD80]{CannonDiBenedetto1980:ExistenceStefan}
John~R. Cannon and Emmanuele DiBenedetto, \emph{On the {{Existence}} of
  {{Weak-Solutions}} to an {\emph{n}} -{{Dimensional Stefan Problem}} with
  {{Nonlinear Boundary Conditions}}}, SIAM J. Math. Anal. \textbf{11} (1980),
  no.~4, 632--645.

\bibitem[CK06]{ChoiKim2006:WaitingTime}
Sunhi Choi and Inwon~C. Kim, \emph{Waiting time phenomena of the {{Hele-Shaw}}
  and the {{Stefan}} problem}, Indiana University Mathematics Journal
  \textbf{55} (2006), no.~2, 525--552.

\bibitem[CS05]{CaffarelliSalsa:GeomApproachToFreeBoundary}
Luis Caffarelli and Sandro Salsa, \emph{A geometric approach to free boundary
  problems}, Graduate Studies in Mathematics, vol.~68, American Mathematical
  Society, Providence, RI, 2005.

\bibitem[DDG21]{DingDuGuo2021:StefanProblemFisher-unbounded}
Weiwei Ding, Yihong Du, and Zongming Guo, \emph{The {{Stefan}} problem for the
  {{Fisher}}--{{KPP}} equation with unbounded initial range}, Calculus of
  Variations and Partial Differential Equations \textbf{60} (2021), no.~2, 69.

\bibitem[DG96]{DeGiorgi1996:ConjecturesEvolution}
Ennio De~Giorgi, \emph{Conjectures on some evolution problems}, Duke Math. J.
  \textbf{81} (1996), no.~2, 255--268 (Italian).

\bibitem[Duv73]{Duvaut:ResolutionStefan}
Georges Duvaut, \emph{R{\'e}solution d'un probl{\`e}me de {Stefan} (fusion d'un
  bloc de glace {\`a} z{\'e}ro degr{\'e})}, C. R. Acad. Sci., Paris, S{\'e}r. A
  \textbf{276} (1973), 1461--1463.

\bibitem[Eva10]{Evans2010:bookPDE}
Lawrence~C. Evans, \emph{Partial differential equations}, 2nd ed. ed., Grad.
  Stud. Math., vol.~19, Providence, RI: American Mathematical Society (AMS),
  2010.

\bibitem[Fig18]{Figalli2018:Survey}
Alessio Figalli, \emph{Free boundary regularity in obstacle problems},
  Journ\'ees \'equations aux d\'eriv\'ees partielles, Groupement de recherche
  2434 du CNRS, 2018, talk no. 2, pp.~1--24.

\bibitem[FP85]{FasanoPrimicerio85:Mushy-variablemelting}
Antonio Fasano and Mario Primicerio, \emph{Mushy regions with variable
  temperature in melting processes}, Boll. Unione Mat. Ital., VI. Ser., B
  \textbf{4} (1985), 601--626 (English).

\bibitem[Fri68]{Friedman_1968}
Avner Friedman, \emph{The stefan problem in several space variables},
  Transactions of the American Mathematical Society \textbf{133} (1968),
  51–87.

\bibitem[GZ91]{GotzZaltzman:NonincreaseMushyInhomogeneousPb}
Ivan~G. G{\"o}tz and Boris~B. Zaltzman, \emph{Nonincrease of mushy region in a
  nonhomogeneous {Stefan} problem}, Q. Appl. Math. \textbf{49} (1991), no.~4,
  741--746.

\bibitem[HS17]{HadzicShkoller2017:Gibbs-ThomsonStefan}
Mahir Had{\v z}i{\'c} and Steve Shkoller, \emph{Well-posedness for the
  {{Classical Stefan Problem}} and the {{Zero Surface Tension Limit}}}, Archive
  for Rational Mechanics and Analysis \textbf{223} (2017), no.~1, 213--264
  (en).

\bibitem[Ilm94]{Ilmanen1994:EllipticRegularization}
Tom Ilmanen, \emph{Elliptic regularization and partial regularity for motion by
  mean curvature}, Mem. Am. Math. Soc., vol. 520, Providence, RI: American
  Mathematical Society (AMS), 1994.

\bibitem[Kam61]{Kamenomostskaya}
Susanna~L. Kamenomostskaya, \emph{On the {Stefan} problem}, Mat. Sb., Nov. Ser.
  \textbf{53} (1961), 489--514 (Russian).

\bibitem[KLV95]{KingLaceyVasquez1995:AnglesHeleShaw}
John~R. King, Andrew~A. Lacey, and Juan~L. Vazquez, \emph{Persistence of
  corners in free boundaries in {Hele}-{Shaw} flow}, Eur. J. Appl. Math.
  \textbf{6} (1995), no.~5, 455--490.

\bibitem[LC31]{LameClapeyron1831}
Gabriel Lamé and Benoit Paul~\'Emile Clapeyron, \emph{M\'emoire sur la
  solidification par refroidissement d'un globe liquide}, Ann. Chimie Physique
  \textbf{47} (1831), 250--256.

\bibitem[Lio65]{LionsJL65:elliptic-regularization}
Jacques-Louis Lions, \emph{Sur certaines \'equations paraboliques non
  lin\'eaires}, Bull. Soc. Math. France \textbf{93} (1965), 155--175 (French).

\bibitem[LM68]{LionsMagenes68:BookVol1}
Jacques-Louis Lions and Enrico Magenes, \emph{Probl\`emes aux limites non
  homog\`enes et applications. {V}ol. 1}, Travaux et Recherches
  Math\'ematiques, vol. No. 17, Dunod, Paris, 1968 (French).

\bibitem[LSU68]{Ladyzhenskaya_Solonnikov_Uraltseva68:Parabolic_equation_BOOK}
Olga~A. Ladyzhenskaya, Vsevolod~A. Solonnikov, and Nina~N. Uraltseva,
  \emph{Linear and quasi-linear equations of parabolic type. {Translated} from
  the {Russian} by {S}. {Smith}}, Transl. Math. Monogr., vol.~23, American
  Mathematical Society (AMS), Providence, RI, 1968.

\bibitem[Maz11]{Mazya2011:BookSobolev}
Vladimir~G. Maz'ya, \emph{Sobolev spaces. {With} applications to elliptic
  partial differential equations. {Transl}. from the {Russian} by {T}. {O}.
  {Shaposhnikova}}, 2nd revised and augmented ed. ed., Grundlehren Math. Wiss.,
  vol. 342, Berlin: Springer, 2011.

\bibitem[Mei81]{Meirmanov81:ExampleMushy}
Anvarbek~M. Meirmanov, \emph{An example of nonexistence of a classical solution
  of the {Stefan} problem}, Sov. Math., Dokl. \textbf{23} (1981), 564--566.

\bibitem[Mei92]{Meirmanov_1992}
\bysame, \emph{The {Stefan} problem. {Translated} from the {Russian} by {Marek}
  {Niezg{\'o}dka} and {Anna} {Crowley}}, De Gruyter Expo. Math., vol.~3, Berlin
  etc.: Walter de Gruyter, 1992.

\bibitem[Ole60]{Oleinik60}
Olga~A. Ole{i}nik, \emph{A method of solution of the general stefan problem.},
  Soviet Math. Dokl. (1960), 1054–1057 (Russian).

\bibitem[Ole64]{Oleinik64:elliptic-regularization}
\bysame, \emph{On a problem of {G}. {F}ichera}, Dokl. Akad. Nauk SSSR
  \textbf{157} (1964), 1297--1300 (Russian).

\bibitem[Par26]{Park2026:radial3Dsharp}
Jeongheon Park, \emph{Melting and freezing rates of the radial interior
  {S}tefan problem in two dimension}, J. Funct. Anal. \textbf{291} (2026),
  no.~7, Paper No. 111559.

\bibitem[Pri83]{Primicerio83:Mushy}
Mario Primicerio, \emph{Mushy region in phase change problems}, Applied
  Nonlinear Functional Analysis: Variational Methods and Ill-Posed Problems
  (Karl-Heinz Hoffmann and Rudolf Gorenflo, eds.), Verlag Peter Lang,
  Frankfurt, 1983, pp.~251--269.

\bibitem[PV26]{Paiano_Velichkov2026:monotonicity}
Filippo Paiano and Bozhidar Velichkov, \emph{Solutions of the stefan problem
  with finite total enthalpy}, 2026, In preparation.

\bibitem[RB84]{RogersBerger1984:MonotonicityMushyRegionTwoPhaseStefan}
Joel~C.W Rogers and Alan~E Berger, \emph{Some properties of the nonlinear
  semigroup for the problem {$u_t - \Delta f(u) = 0$}}, Nonlinear Anal., Theory
  Methods Appl. \textbf{8} (1984), no.~8, 909--939.

\bibitem[Rub71]{rubinstein71:Stefan}
Lev~I. Rubinstein, \emph{The {{Stefan}} problem}, Translation of {{Mathematical
  Monograph}}, vol.~27, American Mathematical Society, Providence, RI, 1971.

\bibitem[ST12]{Serra_Tilli_2012:wave}
Enrico Serra and Paolo Tilli, \emph{Nonlinear wave equations as limits of
  convex minimization problems: proof of a conjecture by de giorgi}, Ann. Math.
  (2) \textbf{175} (2012), no.~3, 1551–1574.

\bibitem[Ste89a]{Stefan1889:AcidBaseDiffusion}
Josef Stefan, \emph{Über die diffusion von säuren und basen gegen einander},
  Monatsh. Chem. \textbf{10} (1889), 201--219 (german), Original publication:
  \emph{S.-B. Wien. Akad. Math. Natur.} \textbf{98} (1889), 616--634.

\bibitem[Ste89b]{Stefan1889:HeatConduction}
\bysame, \emph{Über einige probleme der theorie der wärmeleitung}, Wien.
  Akad. Sitzungsber. Math.-Naturwiss. Kl. \textbf{98} (1889), no.~2a, 473--484
  (german).

\bibitem[Ste90]{Stefan1890:Diffusion}
\bysame, \emph{Über die verdampfung und die auflösung als vorgänge der
  diffusion}, Ann. Phys. \textbf{277} (1890), no.~12, 725--747 (german),
  Original publication: \emph{S.-B. Wien. Akad. Math. Natur.} \textbf{98}
  (1889), 1418--1442.

\bibitem[Ste91]{Stefan1891:IceFormation}
\bysame, \emph{Über die theorie der eisbildung, insbesondere über die
  eisbildung im polarmeere}, Ann. Phys. Chem. \textbf{42} (1891), 269--286
  (german), Original publication: \emph{S.-B. Wien. Akad. Math. Natur.}
  \textbf{98} (1889), 965--983.

\bibitem[Vis96]{Visintin:BookPhaseTransition}
Augusto Visintin, \emph{Models of phase transitions}, Prog. Nonlinear Differ.
  Equ. Appl., vol.~28, Boston: Birkh{\"a}user, 1996.

\end{thebibliography}

\end{document}